\tikzstyle{arrow}=[thick, ->, >=stealth]
\tikzset{
	edge/.style={->,> = latex'}
}
\definecolor{darkblue}{rgb}{0.0,0,0.7} 
\definecolor{lightblue}{rgb}{0,135,147}
\definecolor{red}{rgb}{1,0,0}
\definecolor{darkred}{rgb}{0.7,0,0} 
\definecolor{lightgrey}{rgb}{0.7,0.7,0.7} 
\newcommand{\defn}[1]{{\color{darkred}\emph{#1}}} 
\newcommand{\mms}{_}
\newcommand{\area}{\mathsf{area}}
\newcommand{\dinv}{\mathsf{dinv}}
\newcommand{\stat}{\mathsf{stat}}
\newcommand{\comb}{\operatorname{comb}}
\newcommand{\wt}{\mathrm{wt}}
\newcommand{\twt}{\widetilde{\wt}}
\newtheorem{theorem}{Theorem}[section]
\newtheorem{lemma}[theorem]{Lemma}
\newtheorem{conjecture}[theorem]{Conjecture}
\newtheorem{corollary}[theorem]{Corollary}
\newtheorem{proposition}[theorem]{Proposition}
\theoremstyle{definition}
\newtheorem{example}[theorem]{Example}
\newtheorem{remark}[theorem]{Remark}
\numberwithin{equation}{section}
\DeclareMathOperator{\FHilb}{FHilb}
\newcommand{\CC}{\mathbb{C}}
\newcommand{\CL}{\mathcal{L}}
\newcommand{\CO}{\mathcal{O}}
\DeclareMathOperator{\Res}{Res}
\title{Generalized $q,t$-Catalan numbers}
\author[Gorsky]{Eugene Gorsky}
\address[E. Gorsky]{Department of Mathematics, University of California, One Shields
Avenue, Davis, CA 95616-8633, U.S.A.}
\address[E. Gorsky]{International Laboratory of Representation Theory and Mathematical Physics, NRU-HSE, Moscow, Russia}
\email{egorskiy@math.ucdavis.edu}
\urladdr{http://www.math.ucdavis.edu/\~{}egorskiy}
\author[Hawkes]{Graham Hawkes}
\address[G. Hawkes]{Department of Mathematics, University of California, One Shields
Avenue, Davis, CA 95616-8633, U.S.A.}
\curraddr{Max--Planck--Institut f\"ur Mathematik, Vivatsgasse 7, 53111 Bonn, Germany}
\email{hawkes@math.ucdavis.edu}
\urladdr{http://guests.mpim-bonn.mpg.de/hawkes/}
\author[Schilling]{Anne Schilling}
\address[A. Schilling]{Department of Mathematics, University of California, One Shields
Avenue, Davis, CA 95616-8633, U.S.A.}
\email{anne@math.ucdavis.edu}
\urladdr{http://www.math.ucdavis.edu/\~{}anne}
\author[Rainbolt]{Julianne Rainbolt}
\address[J. Rainbolt]{Department of Mathematics and Statistics, Saint Louis University, 220 North Grand Blvd., 
Saint Louis, MO 63103, U.S.A.}
\email{rainbolt@slu.edu}
\urladdr{https://mathstat.slu.edu/people/rainbolt}
\begin{document}

\begin{abstract}
Recent work of the first author, Negu\cb{t} and Rasmussen, and of Oblomkov and Rozansky in the context of
Khovanov--Rozansky knot homology produces a family of polynomials in $q$ and $t$ labeled by integer sequences. 
These polynomials can be expressed as equivariant Euler characteristics of certain line bundles on flag Hilbert schemes.
The $q,t$-Catalan numbers and their rational analogues are special cases of this construction. In this paper, we give a 
purely combinatorial treatment of these polynomials and show that in many cases they have nonnegative integer coefficients. 

For sequences of length at most 4, we prove that these coefficients enumerate subdiagrams in a certain fixed Young 
diagram and give an explicit symmetric chain decomposition of the set of such diagrams. This strengthens results of 
Lee, Li and Loehr for $(4,n)$ rational $q,t$-Catalan numbers.
\end{abstract}

\maketitle

\section{Introduction}

The last decade revealed deep, and yet partially conjectural 
connections~\cite{GORS.2014,GN.2015,GM.2013,GM.2014,GM.2016,GMV.2016,GMV.2017}
of the HOMFLY-PT link homologies with various intricate constructions in algebraic combinatorics such as $q,t$-Catalan 
numbers of Garsia and Haiman~\cite{GarsiaHaiman.1996}, LLT polynomials \cite{HHLRU.2005}, and the elliptic Hall 
algebra~\cite{SV.2013}. Some of these conjectures were recently proven (mostly for the torus knots and links) by Elias,
Hogancamp and Mellit~\cite{EH.2019,Hogancamp.2017,Mellit.2017}. 

An interesting class of knots, which best fits in the framework of the above conjectures, are the so-called Coxeter links 
defined as closures of braids 
\[
	\beta(a_1,\ldots,a_n)=\ell_1^{a_1}\cdots \ell_n^{a_n}t_1\cdots t_{n-1},
\]
where $\ell_i=t_{i-1}\cdots t_1t_1\cdots t_{i-1}$ are Jucys--Murphy elements and $t_i$ are the standard braid group 
generators. Here $a_i$ are arbitrary integers, but in this paper we will mostly assume $a_i\geqslant 0$, so that all crossings 
in the braid $\beta(a_1,\ldots,a_n)$ are positive.
 
Motivated by the geometry of the flag Hilbert scheme of points on the plane (see Section~\ref{subsec.fhilb} and 
references therein) we can approximate the invariants of such knots with the following combinatorial expressions. Define
\begin{equation}
\label{eq: def f}
	f(a_1,\ldots,a_n)=\sum_{T}z_1^{a_1}\cdots z_n^{a_n} \; \prod_{i=2}^{n} \frac{1}{(1-z_i^{-1})(1-qtz_{i-1}/z_i)}
	\; \prod_{i<j}\omega(z_i/z_j), 
\end{equation}
where the sum is over standard tableaux $T$ with $n$ boxes, $z_i$ is the $(q,t)$-content $q^{c-1} t^{r-1}$
of the box labeled by $i$ in row $r$ and column $c$ in $T$, and $\omega(x)=\frac{(1-x)(1-qtx)}{(1-qx)(1-tx)}$. A priori, 
this is a rational function in $q$ and $t$, but we prove 
in Section~\ref{subsec:Tesler} that it is always a polynomial in $q$ and $t$ with integer coefficients. This polynomial can 
be expressed as a sum over Tesler matrices with row sums $a_i$ as in  \cite{GN.2015} and 
especially~\cite{AGHRS.2012}, where similar polynomials have already appeared. 

In the special case when 
\[
	a_i=S_{i}(m,n):=  \left \lceil \frac{im}{n} \right \rceil - \left \lceil \frac{(i-1)m}{n} \right \rceil,
\]
by \cite{GN.2015} the function $f(S_{i}(m,n))$ agrees with the \defn{rational $q,t$-Catalan number} $c_{m,n}(q,t)$.\footnote{Note that the formula for $S_i(m,n)$ in \cite{GN.2015} used floors instead of ceilings, but the two are related by the change $i\to n+1-i$. This change is implicit in \cite{GN.2015} since that paper uses opposite conventions for standard tableaux.}
By the main result of \cite{Mellit.2016}, this is a polynomial in $q$ and $t$ with nonnegative coefficients. More precisely, 
\[
	c_{m,n}(q,t)=\sum_{D}q^{\area(D)}t^{\dinv(D)},
\]
where the sum is over all Dyck paths $D$ in the $m\times n$ rectangle and $\area(D)$, $\dinv(D)$ are certain 
combinatorial statistics (see for example~\cite{Haglund.2008}). In the even more special case $m=n+1$, we obtain 
$a_i=S_i(n+1,n)=1$ for $i>1$, and the polynomial $f(1,\ldots,1)=f(2,1,\ldots,1)$ agrees with the
\defn{$q,t$--Catalan number} of Garsia and Haiman~\cite{GarsiaHaiman.1996}. 

Motivated by~\cite{GNR.2016,OR.2017}, we expect that the beautiful combinatorics of $q,t$-Catalan numbers and 
their rational analogues can be generalized to the case of arbitrary $a_i$, possibly constrained by some inequalities. 
In fact, as we show in this paper, that varying $a_i$ allows one to compute the invariants $f(a_1,\ldots,a_n)$ recursively, 
see Corollary~\ref{corollary.recursion} for the $n=4$ example. 

Using the machinery of Tesler matrices, we  prove the following result.
\begin{proposition}
Suppose that $a_i\geqslant 0$. Then $f(a_1,\ldots,a_n)$ is a polynomial in $q$ and $t$.
At $t=1$, this polynomial specializes to 
\[
	f(a_1,\ldots,a_n) \Big|_{t=1} 
	=\sum_{\mu\subseteq \lambda(a)}q^{|\lambda(a)|-|\mu|},
\]
where $\lambda(a)=(a_2+\cdots+a_n,a_3+\cdots+a_n,\ldots,a_n).$
\end{proposition}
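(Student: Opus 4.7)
The plan is to route the proof through the Tesler matrix reformulation of $f(a_1,\ldots,a_n)$ developed in Section~\ref{subsec:Tesler}. Going through Tesler matrices is essential because one cannot simply substitute $t=1$ into~\eqref{eq: def f}: the individual summands are genuinely rational functions and do diverge at $t=1$. For instance the factor $(1-qtz_{i-1}/z_i)$ vanishes at $t=1$ whenever $z_i=qz_{i-1}$, which happens in any standard tableau in which $i$ sits immediately to the right of $i-1$. So the first order of business is to eliminate these denominators.

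For the polynomiality part, I would invoke the Tesler expansion of Section~\ref{subsec:Tesler}, which writes
\[
f(a_1,\ldots,a_n)=\sum_B W(B;q,t),
\]
where $B$ ranges over a finite set of Tesler matrices with row sums $a_1,\ldots,a_n$ and each $W(B;q,t)\in\mathbb{Z}[q,t]$. Since the sum is finite and every summand is a polynomial, $f$ is a polynomial in $q,t$ with integer coefficients, giving the first claim.

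For the $t=1$ specialization I would substitute $t=1$ directly in the Tesler formula. Because $\omega(x)\mid_{t=1}=1$, the $\omega$-factors in~\eqref{eq: def f} disappear entirely, and the weights $W(B;q,1)$ collapse to simple $q$-monomials. The expected picture is that only a restricted family of Tesler matrices survives, naturally parametrized by sub-partitions $\mu\subseteq\lambda(a)$: the off-diagonal entries of the surviving $B$ encode the parts of $\mu$, while the diagonal entries are forced by the row-sum conditions. The row-sum constraint ``the $i$-th row sums to $a_i$'' should translate precisely into the sub-partition constraints $\mu_k\leq\lambda_k=a_{k+1}+\cdots+a_n$ together with $\mu_1\geq\mu_2\geq\cdots$, and the residual $q$-weight of each surviving $B$ should come out to exactly $q^{|\lambda(a)|-|\mu|}$. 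As a sanity check, for $n=2$ we have $\lambda(a)=(a_2)$ and the identity becomes $f(a_1,a_2)\mid_{t=1}=1+q+\cdots+q^{a_2}=[a_2+1]_q$, which is the familiar $t=1$ Tesler specialization for $2\times 2$ matrices.

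The main obstacle is making this last identification precise: pinning down which Tesler matrices survive at $t=1$ and exhibiting an explicit bijection to sub-partitions of $\lambda(a)$ that matches the $q$-weight on the nose. As a backup, I would fall back on induction on $n$, using the recursion of Corollary~\ref{corollary.recursion} to strip the last row/column of the Tesler matrix and independently verifying that $\sum_{\mu\subseteq\lambda(a)}q^{|\lambda(a)|-|\mu|}$ satisfies the same recursion, for instance by conditioning on the first part $\mu_1$. The base case $n=1$ is immediate since $f(a_1)=1$ and $\lambda(a)$ is empty; the inductive step reduces to a routine generating-function identity.
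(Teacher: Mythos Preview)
Your proposal is correct and matches the paper's approach: polynomiality follows from the Tesler expansion~\eqref{eq:tesler big sum} together with Lemma~\ref{lem:tesler finite}, and the $t=1$ specialization follows because $A(m)|_{t=1}=\delta_{m,0}$ and $B(m)|_{t=1}=q^m$, so only the two-diagonal Tesler matrices survive, which Lemma~\ref{lem:subdiagrams} puts in bijection with subdiagrams $\mu\subseteq\lambda(a)$ with weight $\prod_i q^{m_{i,i+1}}=q^{|\lambda(a)|-|\mu|}$. One small correction: your backup plan via Corollary~\ref{corollary.recursion} would not work for general $n$, since that recursion is specific to $n=4$.
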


\begin{example}
\label{example.2 and 3}
For $n=2$, one has 
\[
	f(a_1,a_2)=[a_2+1]_{q,t}:=q^{a_2}+q^{a_2-1}t+\cdots+qt^{a_2-1}+t^{a_2}.
\]
For $n=3$ and $a_2\geqslant a_3$ one has
\[
	f(a_1,a_2,a_3)=[a_2+2a_3+1]_{q,t}+qt[a_2+2a_3-2]_{q,t}+\cdots+q^{a_3}t^{a_3}[a_2-a_3+1]_{q,t}.
\]
See Examples~\ref{example.n=2} and~\ref{example.n=3} for derivations of these formulas.
\end{example}

The following conjecture was communicated to the authors by Andrei Negu\cb{t}.

\begin{conjecture}[Negu\cb{t}]
\label{conj:positivity}
If $a_1\geqslant a_2 \geqslant \cdots\geqslant a_n\geqslant 0$, then $f(a_1,\ldots,a_n)$ is a polynomial in $q$ and $t$ 
with nonnegative coefficients.
\end{conjecture}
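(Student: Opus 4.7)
The plan is to refine the $t=1$ specialization supplied by the preceding proposition into a two-variable combinatorial model of the form
\[
    f(a_1,\ldots,a_n) = \sum_{\mu \subseteq \lambda(a)} q^{|\lambda(a)|-|\mu|}\, t^{\beta(\mu)},
\]
where $\beta$ is a nonnegative integer statistic on the subdiagrams of $\lambda(a)$. Any such description would prove Conjecture~\ref{conj:positivity} immediately, and would automatically specialize correctly at $t=1$.

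For $n \leq 3$, the conjecture follows directly from Example~\ref{example.2 and 3}. In fact, rewriting the $n=3$ formula as $\sum_{k=0}^{a_3}(qt)^k[a_2+2a_3-3k+1]_{q,t}$ already exhibits a symmetric chain decomposition of the poset of subdiagrams of $\lambda(a)=(a_2+a_3,a_3)$: the $k$-th chain consists of $a_2+2a_3-3k+1$ subdiagrams of consecutive sizes, with an overall $t$-shift of $k$. This suggests the right template for the general construction.

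For arbitrary $n$, my plan is to construct a symmetric chain decomposition of the poset $\mathcal{P}(\lambda(a))$ of subdiagrams of $\lambda(a)$ under containment: a partition into saturated chains $\mathcal{C}$, where each chain contributes a symmetric block $q^{r(\mathcal{C})}\, t^{s(\mathcal{C})}\, [\ell(\mathcal{C})+1]_{q,t}$ to $f$, so that within each chain the element $\mu$ at position $j$ carries weight $q^{|\lambda(a)|-|\mu|}\, t^{s(\mathcal{C})+j}$. To verify that a candidate decomposition sums to $f$, I would induct on $n$ using the recursion in Corollary~\ref{corollary.recursion}, cross-checking against the Tesler matrix expansion of Section~\ref{subsec:Tesler}. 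The dominance hypothesis $a_1 \geq \cdots \geq a_n$ should play its role precisely in ensuring that all the chain contributions are simultaneously nonnegative, with no cancellations needed.

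The principal obstacle is producing such a chain decomposition for general $n$. For $n \leq 4$, the shape $\lambda(a)$ has at most three rows and admits a manageable case analysis, which is exactly what the body of the paper carries out. For $n \geq 5$, however, $\lambda(a) = (a_2+\cdots+a_n,\, a_3+\cdots+a_n,\, \ldots,\, a_n)$ is a genuinely $(n-1)$-dimensional shape, so an ad hoc matching cannot be expected to scale; what is needed is a uniform, inductively defined matching rule that is compatible both with the recursion on $a_n$ and with the bi-symmetry demanded of each chain. Designing such a rule is essentially the content of the conjecture.
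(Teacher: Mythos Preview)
This statement is a \emph{conjecture}, and the paper does not prove it in general; it establishes only the cases $n\leqslant 4$. Your proposal is likewise not a proof but an outline of a strategy, and you are candid about this in your final paragraph. So there is no disagreement to adjudicate: neither you nor the paper has a proof for arbitrary $n$.

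That said, your outlined strategy for small $n$ is exactly the one the paper executes. The symmetric chain decomposition you sketch for $n=3$ matches Example~\ref{example.2 and 3}, and your proposed template for $n=4$---chains in the poset of subdiagrams of $\lambda(a)$, verified against a recursion in the last parameter---is precisely what Sections~\ref{section.combinatorial expressions} and~\ref{section.proof} carry out. One small correction: the recursion in Corollary~\ref{corollary.recursion} is specific to $n=4$, not a general recursion in $n$, so ``induct on $n$ using the recursion in Corollary~\ref{corollary.recursion}'' is not quite right; the paper's inductive verification is in the parameter $c=a_4$ (via the two-step recursion of Lemma~\ref{lemma.two step recurrence}), not in the number of variables.

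The genuine gap, which you yourself name, is that no uniform construction of the chains is known for $n\geqslant 5$. Your last sentence is accurate: producing such a rule \emph{is} the conjecture. Nothing in your proposal or in the paper bridges that gap.
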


For general $a_1\geqslant a_2 \geqslant \cdots\geqslant a_n\geqslant 0$, it is still an open problem to find an explicit 
statistic $\stat$ on partitions $\mu$ such that
\begin{equation}
\label{eq: mystery stat}
	f(a_1,\ldots,a_n)=\sum_{\mu\subseteq \lambda(a)}q^{|\lambda(a)|-|\mu|}\; t^{\stat(\mu)}.
\end{equation}
In this paper, we solve the problem for $n=4$:

\begin{theorem}
For $a+1 \geqslant b$, $a+1,b+1 \geqslant c\geqslant 0$, the polynomial $F(a,b,c) := f(a_1,a,b,c)$ has nonnegative 
integer coefficients and can be written in the form~\eqref{eq: mystery stat}. 
The statistic $\stat(\mu)$ arises from an explicit decomposition of the set of $\mu\subseteq \lambda(a)$
into symmetric chains. 
\end{theorem}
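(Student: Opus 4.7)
The goal is to exhibit an explicit decomposition of the inclusion poset $P_\lambda=\{\mu:\mu\subseteq\lambda\}$ with $\lambda=\lambda(a)=(a+b+c,\,b+c,\,c)$ into chains of consecutive ranks, so that each chain $C$'s contribution to $F(a,b,c)$ is a palindromic block of the form $q^{u}t^{u}\,[\#C]_{q,t}$ for some nonnegative integer $u=u(C)$. This is the notion of ``symmetric chain'' forced on us by Example~\ref{example.2 and 3}, where the $n=3$ answer is manifestly a sum of such blocks. One then reads off $\stat(\mu)$ from the position of $\mu$ in its chain so that $(|\lambda|-|\mu|)+\stat(\mu)$ is constant along the chain; nonnegativity of $F(a,b,c)$ and the representation~\eqref{eq: mystery stat} follow immediately.

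My approach is to construct the chains recursively by peeling off the third row. For each $0\leqslant j\leqslant c$, the slice $\{\mu\in P_\lambda:\mu_3=j\}$ is a two-row inclusion poset of the type handled by Example~\ref{example.2 and 3}, hence admits a natural chain decomposition analogous to the $n=3$ case. These slice decompositions do not on their own yield $P_\lambda$-symmetric chains, so the next step is to glue chains across adjacent slices $\mu_3=j$ and $\mu_3=j+1$ via the move of adding a single box in the third row, extending chains until each merged chain's $(q,t)$-weight becomes palindromic. The hypotheses $a+1\geqslant b$ and $a+1,b+1\geqslant c\geqslant 0$ should be precisely the conditions under which every slice chain has a unique gluing partner, so that the resulting collection of chains covers $P_\lambda$ exactly once.

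To verify the candidate identity $F(a,b,c)=\sum_{C}q^{u(C)}t^{u(C)}[\#C]_{q,t}$, I would proceed via the recursion of Corollary~\ref{corollary.recursion}: show that the SCD-generating function satisfies the same recursion in the parameter $c$ as $F$, with base case $c=0$ collapsing to the two-variable statement in Example~\ref{example.2 and 3}. The principal obstacle will be the gluing rule itself: the three boundary regimes $a+1=b$, $a+1=c$, and $b+1=c$ produce qualitatively different slice structures, and showing that the gluing correctly pairs up slice chains in each regime — without ever leaving orphans or double-counting — will require a detailed case analysis with small examples as guideposts. Translating the completed SCD into the polynomial identity via Corollary~\ref{corollary.recursion} should, by contrast, be a largely formal check.
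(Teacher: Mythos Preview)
Your high-level strategy---build an explicit symmetric chain decomposition (SCD) of $P_\lambda$ and then verify that its generating function satisfies the same recursion as $F(a,b,c)$---matches the paper's. But the proposal is a plan, not a proof: the central construction is never actually given.

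The paper's SCD is not a slice-and-glue along $\mu_3$. Its chains (``strings'') start at a pseudohead $(i,i,j)$, increase the first part to some $p$, then the second part to some $q$, and only at the end increase $\mu_3$ from $j$ to $c$; in the $P^+$ regime an ``appendage'' of partitions $(k,\ell,z)$ with $z$ running below a cutoff is adjoined at the small end. The chain endpoints are parametrized by four interchangeable indexing sets (tails, pseudoheads, heads, quasiheads), with explicit area-preserving bijections between them; the quasihead parametrization is what makes the recursion check tractable. Your proposed slicing by $\mu_3$ and gluing across adjacent slices is a genuinely different construction, and the chains in Example~\ref{example.432} (e.g.\ $(2,2,0)\!\to\!(2,2,1)\!\to\!(3,2,1)\!\to\!(3,2,2)$, where $\mu_3$ is not monotone with respect to the slice decomposition in the way you suggest) show that the paper's chains are not of this form.

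The gap is precisely where you flag it: you say the gluing rule ``will require a detailed case analysis'' but do not give the rule, and without it there is nothing to verify. There is also a second issue you underestimate: you plan to use Corollary~\ref{corollary.recursion} directly, but that recursion feeds $F$ arguments $a-b-2c+4i$ that can be negative, which is why the paper instead proves and uses the two-step recursion of Lemma~\ref{lemma.two step recurrence} (where negative terms are controlled via Proposition~\ref{proposition.negative} and cancel in pairs). Matching an SCD generating function to the one-step recursion would force you to interpret those negative-argument contributions combinatorially, which is not ``largely formal''. If you want to pursue your slice-and-glue route, you will need both an explicit gluing rule (with a proof that it yields a partition of $P_\lambda$ into rank-symmetric saturated chains under all three boundary regimes) and a recursion adapted to avoid or absorb the negative-argument terms.
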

See Section~\ref{section.combinatorial expressions} for further details.

Since a symmetric chain specializes to $q^{k}+q^{k-2}+\cdots+q^{-k+2}+q^{-k}$ a $t=q^{-1}$, we immediately obtain the 
following corollary.

\begin{corollary}
For $a+1 \geqslant b$, $a+1,b+1 \geqslant c\geqslant 0$, the coefficients of the specialization
$F(a,b,c)|_{t=q^{-1}}$ are unimodular in even and in odd degrees. 
\end{corollary}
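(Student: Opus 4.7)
The plan is to derive the corollary directly from the symmetric chain decomposition produced by the theorem. By that theorem, the lattice $\{\mu\subseteq\lambda(a)\}$ partitions into chains $C$, and the statistic $\stat$ is chosen so that the contribution of each chain to $F(a,b,c)$ is a symmetric chain of $q,t$-monomials. Setting $t=q^{-1}$ then collapses the inner sum over each such chain to a Laurent polynomial
\[
    P_C(q) \;=\; q^{k_C}+q^{k_C-2}+\cdots+q^{-k_C+2}+q^{-k_C}
\]
for some integer $k_C\geqslant 0$, exactly as recalled just before the corollary. Hence $F(a,b,c)\big|_{t=q^{-1}}=\sum_C P_C(q)$, and the problem reduces to analyzing this sum.

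I would then record three basic properties of each $P_C(q)$: it is palindromic under $q\mapsto q^{-1}$, its coefficient sequence is unimodal with peak at the constant term, and all of its exponents share the parity of $k_C$. The third property is the crucial one: it shows that the even-degree coefficients of $F(a,b,c)\big|_{t=q^{-1}}$ receive contributions only from chains with $k_C$ even, and the odd-degree coefficients only from chains with $k_C$ odd.

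The remaining step is the elementary observation that a sum of Laurent polynomials which are palindromic under $q\mapsto q^{-1}$ and unimodal with peak at $q^0$ is itself palindromic and unimodal. Applying this to the even-$k_C$ and odd-$k_C$ sub-sums separately yields unimodality (and palindromicity) of $F(a,b,c)\big|_{t=q^{-1}}$ on each parity class of degrees, which is the content of the corollary.

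No substantive obstacle arises here, since the entire combinatorial work---choosing the chain decomposition and defining $\stat$---has already been carried out in the theorem. The only point I would spell out carefully is the passage from a symmetric chain to the shape $P_C(q)$: this just amounts to unpacking that, along $C$, the exponent pairs $(|\lambda(a)|-|\mu|,\stat(\mu))$ have constant sum and consecutive differences, so that the substitution $t=q^{-1}$ yields the stated string of consecutive same-parity powers of $q$ symmetric about $q^0$.
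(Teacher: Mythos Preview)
Your proposal is correct and follows exactly the approach the paper indicates: the one-line remark preceding the corollary states that each symmetric chain specializes at $t=q^{-1}$ to $q^k+q^{k-2}+\cdots+q^{-k}$, and you have simply spelled out the routine verification that summing such centered, single-parity, unimodal Laurent polynomials preserves unimodality within each parity class.
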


\begin{remark}
By \cite{GNR.2016,OR.2017} the specialization of $f(a_1,\ldots,a_n)$ at $q=t^{-1}$ coincides with the  part of the 
HOMFLYPT polynomial of the knot $\beta(a_1,\ldots,a_n)$. 
\end{remark}

\begin{remark}
Our statistic and decomposition is different from that in~\cite{LLL.2014, LLL.2018}. In particular, some of their chains 
are not symmetric, but the authors show that partitions come in symmetric pairs. 
\end{remark}

We provide a recursion for $F(a,b,c)$ and prove that the combinatorial expression also satisfies the recursion
(see Sections~\ref{subsec:recursion n=4} and~\ref{section.combinatorial recursion}).

The set of Young diagrams $\mu$ contained in the diagram $\lambda(a)$ is in bijection with the Demazure 
crystal~\cite{Kashiwara.1993,Littelmann.1995} with highest weight $(a_1,\ldots,a_n)$ and Weyl group element 
$c=t_1\cdots t_{n-1}$. The size of $\mu$ can be easily expressed in terms of the weight of the corresponding element 
of the crystal basis. This observation leads to many interesting questions:
\begin{itemize}
\item What is the crystal-theoretic interpretation of the statistic $\stat$? 
\item Is there a crystal-theoretic interpretation of the symmetric chains and the polynomials $f(a_1,\ldots,a_n)$?
\end{itemize}

\begin{remark}
In the terminology of~\cite{CDL.2018}, subdiagrams of $\lambda(a)$ correspond to so-called $s$-Dyck paths, and it is 
shown in \cite{CDL.2018} that they are in bijection with remarkably many combinatorial objects, just as usual Catalan 
numbers are in bijection with trees, triangulations etc. It would be interesting to relate the results of \cite{CDL.2018} 
both to Demazure crystals and to the above statistic  $\stat$.
\end{remark}

The paper is organized as follows. In Section \ref{sec:algebraic}, we discuss the algebraic aspects of
the function $f(a_1,\ldots,a_n)$ (or equivalently $F(a_2,\ldots,a_n)$). The definition of the function $f(a_1,\ldots,a_n)$ 
is given in Section~\ref{subsection.formula}. In Section \ref{subsec.fhilb}, we briefly recall its connection to flag 
Hilbert schemes and knot invariants; combinatorially inclined readers are welcome to skip this section. In 
Section~\ref{subsec:Tesler}, we connect $f(a_1,\ldots,a_n)$ to Tesler matrices and prove that they are indeed 
polynomials in $q$ and $t$. In Section~\ref{subsec:recursion n=4} we prove the recursion for $n=4$.
Section~\ref{section.combinatorial expressions} contains the combinatorial expressions for $F(a,b,c)$. We also
provide examples. In Section~\ref{section.proof}, we construct the symmetric chains underlying the combinatorial
formulas explicitly and also prove the combinatorial formulas.

\subsection*{Acknowledgments}
The authors would like to thank the American Institute for Mathematics (AIM) for hosting the conference
``Categorified Hecke algebras, link homology, and Hilbert schemes'' in October 1-5, 2018, where this work began.
E. G. would also like to thank Andrei Negu\cb{t} and Karola Meszaros for very useful and inspiring discussions. 

E.G. was partially supported by NSF grants DMS--1700814, DMS--1760329, and the Russian Academic Excellence 
Project 5-100. A.S. was partially supported by NSF grants DMS--1760329 and DMS--1764153.  

\section{The algebraic side}
\label{sec:algebraic}

\subsection{The formula}
\label{subsection.formula}

Given a standard tableau $T$ of size $n$, we define a vector $z(T)=(z_i)_{1\leqslant i \leqslant n}$, where $z_i$ is the 
$(q,t)$-content of the box in $T$ labeled by $i$. The \defn{$(q,t)$-content} of the box with row and column coordinates 
$(r,c)$, is $q^{c-1} t^{r-1}$. For example, for the tableau
$$
	T=\tableau[scY]{
	4 \cr
	3 & 5\cr
	1& 2 & 6 & 7\cr}
$$
we have
$$
	z(T)=(1,q,t,t^2,qt,q^2,q^3).
$$
By convention, $z_1=1$. We define the \defn{weight} of a tableau $T$ by 
$$
	\wt(T)=\wt(z(T))=\prod_{i=2}^n\frac{1}{(1-z_i^{-1})(1-qtz_{i-1}/z_i)}\prod_{i<j}
	\frac{(1-z_i/z_j)(1-qtz_i/z_j)}{(1-qz_i/z_j)(1-tz_i/z_j)}.
$$
Note that some of the individual factors in this product (both in the numerator and denominator) could vanish, and 
the convention is that we simply ignore these factors. Given a vector of integers $(a_2,\ldots,a_n)$ with $n\geqslant 2$, 
we define
\begin{equation}
\label{eq: def F}
	F(a_2,\ldots,a_n)=\sum_{T}z_2^{a_2}\cdots z_n^{a_n}\cdot \wt(T),
\end{equation}
where the summation is over all standard tableaux of size $n$.
\begin{proposition}
\label{prop: F polynomial}
For all integer vectors $(a_2,\ldots,a_n)$, the function $F(a_2,\ldots,a_n)$ is a polynomial in $q$ and $t$ with 
integer coefficients.
\end{proposition}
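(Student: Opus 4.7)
The plan is to interpret $F(a_2,\ldots,a_n)$ as an iterated residue of a single rational function in $z_2,\ldots,z_n$, then re-evaluate that residue by deforming the contours past $0$ and $\infty$. The alternative residue sum will be indexed by Tesler matrices, each contributing a manifestly integer polynomial in $q$ and $t$, which yields polynomiality of $F$. This matches the outline of Section~\ref{subsec:Tesler} and follows the template established in~\cite{GN.2015, AGHRS.2012}.

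Concretely, I would introduce
$$
\Phi(z_2,\ldots,z_n)=z_2^{a_2}\cdots z_n^{a_n}\prod_{i=2}^n\frac{1}{(1-z_i^{-1})(1-qtz_{i-1}/z_i)}\prod_{i<j}\omega(z_i/z_j),
$$
with $z_1:=1$, and compute its iterated residue in the order $z_2,\ldots,z_n$ around a distinguished pole set. At each step $i$ there are two basic choices of denominator factor to pick off, $1-z_i^{-1}$ or $1-qtz_{i-1}/z_i$; combined with the vanishing and cancellation pattern of the $\omega(z_i/z_j)$ factors, the surviving pole configurations will be in bijection with standard Young tableaux of size $n$, with $z_i$ evaluating to the $(q,t)$-content of box $i$. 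The residue at the tableau pole is precisely $z_2^{a_2}\cdots z_n^{a_n}\,\wt(T)$, so the iterated residue reproduces~\eqref{eq: def F}. One then moves each $z_i$-contour past $0$ and $\infty$ and applies the residue theorem: the complementary residues are naturally indexed by upper-triangular nonnegative integer matrices with prescribed row sums---Tesler matrices---and the corresponding Tesler weight is a product of factors of the form $(1-q^{\alpha}t^{\beta})^{\pm 1}$ whose denominators cancel term-by-term by the arguments of~\cite{AGHRS.2012}. The conclusion is that $F(a_2,\ldots,a_n)$ is a polynomial in $q$ and $t$ with integer coefficients.

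The main obstacle I expect is the residue bookkeeping. One must verify that the iterated residue of $\Phi$ returns \emph{exactly} the tableau sum~\eqref{eq: def F}, reconciling the convention of ignoring vanishing factors in $\wt(T)$ with the removable singularities of $\Phi$; and one must check that moving the $z_i$-contours past $0$ and $\infty$ produces no stray boundary contributions for arbitrary integer vectors $(a_2,\ldots,a_n)$. Once these matching lemmas are established, polynomiality and integrality follow immediately from the Tesler matrix formula.
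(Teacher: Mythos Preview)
Your plan is essentially the paper's proof: express $F$ as an iterated contour integral of the rational function $\Phi$, evaluate it once as residues at the tableau poles to recover~\eqref{eq: def F}, and once as residues at infinity to obtain a Tesler matrix sum.

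One point to correct: the Tesler contribution is not a product of factors $(1-q^{\alpha}t^{\beta})^{\pm1}$ with denominators that ``cancel term-by-term.'' What the paper does is change variables $u_i=z_i^{-1}$ and expand each factor as a power series,
\[
\frac{1}{1-u_i}=\sum_{m\geqslant 0}u_i^{m},\qquad
\frac{1-u_{i+1}/u_i}{(1-qu_{i+1}/u_i)(1-tu_{i+1}/u_i)}=\sum_{m\geqslant 0}B(m)(u_{i+1}/u_i)^{m},\qquad
\omega(u_j/u_i)=\sum_{m\geqslant 0}A(m)(u_j/u_i)^{m},
\]
so the coefficient at $u^{a}$ is $\sum_M\prod_i B(m_{i,i+1})\prod_{j>i+1}A(m_{ij})$ with $A(m),B(m)\in\mathbb{Z}[q,t]$ already. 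Polynomiality then follows from finiteness of the Tesler set, not from any cancellation of denominators. Note also that both your outline and the paper's argument use $a_i\geqslant 0$ (for the series expansion at infinity to pick out a finite sum); the proposition's ``all integer vectors'' is not fully covered by this route without an additional remark.
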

The proof is very similar to the computations in~\cite[Section 6.5]{GN.2015}, but we present it in Section~\ref{subsec:Tesler} 
for completeness.

\begin{remark}
For $a_2=\cdots=a_n=m$, the polynomial $F(a_2,\ldots,a_n)$ agrees with the Fuss--Catalan polynomial, 
see~\cite{GN.2015} and~\cite{Mellit.2016}.
\end{remark}

The following conjecture was communicated to the authors by Andrei Negu\cb{t}. 
 
\begin{conjecture}[Negu\cb{t}]
For $a_2\geqslant a_3\geqslant \cdots \geqslant a_n \geqslant 0$, the polynomial $F(a_2,\ldots,a_n)$ has nonnegative
coefficients.
\end{conjecture}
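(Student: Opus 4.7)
The plan is to find a combinatorial model for $F(a_2,\ldots,a_n)$ which makes positivity manifest. By the proposition preceding Example~\ref{example.2 and 3}, the specialization at $t=1$ is the generating function $\sum_{\mu\subseteq\lambda(a)}q^{|\lambda(a)|-|\mu|}$, so the conjecture asks for a bivariate refinement of the form~\eqref{eq: mystery stat} with some statistic $\stat$ on subdiagrams $\mu\subseteq\lambda(a)$. The cleanest way to guarantee positivity is to exhibit $F$ as a sum of ``brick'' polynomials of the form $q^{\alpha}t^{\beta}[k+1]_{q,t}$, which is equivalent to a symmetric chain decomposition of the inclusion poset $\{\mu : \mu\subseteq\lambda(a)\}$ compatible with both the $q$- and the $t$-grading.

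My first step would be to compile small cases by directly expanding the Tesler-matrix formula of Section~\ref{subsec:Tesler}, in order to conjecture the statistic $\stat(\mu)$. The formulas for $n=2$ and $n=3$ in Example~\ref{example.2 and 3} already illustrate the template: each $(q,t)$-bracket $[k]_{q,t}$ corresponds to a chain, and the prefactor $q^{i}t^{j}$ records the chain's position within the poset. Guided by these, I would try to write $F(a_2,\ldots,a_n)$ as a sum indexed by ``anchor'' subdiagrams $\mu_0\subseteq\lambda(a)$, each anchor spawning a chain of subdiagrams by adding boxes in a prescribed order; the statistic $\stat(\mu)$ would then record the height of $\mu$ within its chain.

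To verify that such a decomposition actually reproduces $F$, I would exploit a recursion on the last coordinate $a_n$, paralleling what the authors do for $n=4$. From the Tesler-matrix expansion one should be able to peel off one unit from $a_n$ and express the difference as a linear combination of $F$'s with smaller parameters or smaller length. A matching recursion on the chain model, in which shrinking $a_n$ removes a column of $\lambda(a)$ and truncates or shifts each chain in a controlled way, would then reduce the identity to the base case $F(a_2)=[a_2+1]_{q,t}$ by induction on $a_2+\cdots+a_n$.

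The main obstacle, and the reason the conjecture is open for $n\geq 5$, is the actual construction of the symmetric chain decomposition. Even the abstract existence of such a decomposition for the subdiagram poset of $\lambda(a)$, refined by the $(q,t)$-bigrading rather than merely by $|\mu|$, is delicate; compatibility with the algebraic recursion is harder still. For $n=4$ the relevant poset is essentially three-dimensional and one can engineer the chains by hand, as is done in Section~\ref{section.combinatorial expressions}. A uniform construction in higher $n$ seems to require an extra ingredient, possibly the Demazure crystal interpretation highlighted in the introduction, in which crystal operators would provide a canonical way to propagate chains from smaller $n$ to larger $n$.
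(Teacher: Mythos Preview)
This statement is a \emph{conjecture} in the paper, not a theorem: the paper does not prove it in general, only for $n\leqslant 4$. Your proposal is therefore not being compared against a proof but against the paper's partial results, and on that front your outline is essentially the paper's own strategy (symmetric chain decomposition of the subdiagram poset, verified by matching a recursion in $a_n$ against the algebraic recursion coming from the Tesler expansion).

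However, as a proof of the full conjecture your proposal has a genuine gap, which you yourself name: the actual construction of the symmetric chain decomposition for $n\geqslant 5$ is missing. Everything in your plan up to ``The main obstacle'' is set-up; the step that carries all the content is producing the chains and verifying they satisfy the same recursion as $F$. For $n=4$ the paper does this by an intricate case analysis (the tails/pseudoheads/heads/quasiheads machinery of Section~\ref{section.proof}), and no uniform mechanism for general $n$ is known. Your suggestion that Demazure crystal operators might propagate chains is a reasonable research direction, but it is speculative and not a proof step. In short, your proposal accurately describes the shape a proof would take and matches the paper's method where the paper succeeds, but it does not close the gap that keeps the statement a conjecture.
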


In this paper, we prove this conjecture for $n=2,3$ and $4$ in the slightly more general case
$a_2+1\geqslant a_3$, $a_2+1,a_3+1\geqslant a_4\geqslant 0$. In addition, we provide explicit combinatorial formulas
for $F(a_2,a_3,a_4)$ in this case (see Section~\ref{section.combinatorial expressions}).

\begin{remark}
Note that it is not enough to assume that $a_{i-1}+1 \geqslant a_i$ in the conjecture. For example, 
\begin{multline*}
	F(0,1,2)=q^8 + q^7t + q^6t^2 + q^5t^3 + q^4t^4 + q^3t^5 + q^2t^6 + qt^7 + t^8 + q^6t + q^5t^2 + q^4t^3 + q^3t^4 \\
	+ q^2t^5 + qt^6 + q^5t + 2q^4t^2 + 2q^3t^3 + 2q^2t^4 + qt^5 - q^4t - q^3t^2 - q^2t^3 - qt^4.
\end{multline*}
On can check that $F(1,2,3)$ contains negative terms as well.
\end{remark}

\subsection{Flag Hilbert schemes}
\label{subsec.fhilb}

The definition of $F(a_2,\ldots,a_n)$ is motivated by the geometry of the flag Hilbert scheme of points on the plane,
which we briefly review here.

The flag Hilbert scheme $\FHilb^n(\CC^2)$ is defined as the moduli space of flags  
\[
	\FHilb^n(\CC^2)=\{\CC[x,y]=I_0\supset I_1\supset I_2 \supset \cdots \supset I_n\},
\]
where all $I_k$ are ideals in $\CC[x,y]$ of codimension $k$. Similarly, the punctual flag Hilbert scheme $\FHilb^n(\CC^2,0)$
is defined as the set of such flags, where all $I_k$ are supported at the origin.

The dilation action of $(\CC^*)^2$ on $\CC^2$ defined by $(x,y)\mapsto (q^{-1}x,t^{-1}y)$ lifts to an action on both 
$\FHilb^n(\CC^2)$ and $\FHilb^n(\CC^2,0)$. The fixed points of this action correspond to the flags of monomial ideals, 
and it is easy to see that these are in bijection with standard Young tableaux of size $n$. The flag Hilbert scheme carries 
natural line bundles $\CL_k:=I_{k-1}/I_k$ which are equivariant with respect to the action of $(\CC^*)^2$.
The weight of the line bundle $\CL_k$ at a fixed point corresponding to a standard tableau $T$ equals the 
$(q,t)$-content $z_k(T)$. Note that the line bundle $\CL_1$ is trivial.

The results and conjectures in \cite{GNR.2016,OR.2017} lead to the following conjecture.

\begin{conjecture}
For all $a_i$ the Khovanov--Rozansky homology of the closure of the braid $\beta(a_1,\ldots,a_n)$ 
(defined in the introduction) is isomorphic to the total sheaf cohomology
\[
	H^{\bullet}(\FHilb^n(\CC^2,0),\CL_1^{a_1}\otimes\cdots\otimes \CL_n^{a_n}).
\]
\end{conjecture}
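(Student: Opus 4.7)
The plan is to assemble the conjecture from two halves that are already partially understood: a \emph{geometric} side, built from the Oblomkov--Rozansky \cite{OR.2017} and Gorsky--Negu\cb{t}--Rasmussen \cite{GNR.2016} functors from the Hecke (Soergel) category into coherent sheaves on $\FHilb^n(\CC^2,0)$, and a \emph{homological} side, where Khovanov--Rozansky homology is computed as Hochschild homology of a Rouquier complex. The strategy is to show that when one evaluates the GNR/OR functor on the Rouquier complex associated to $\beta(a_1,\ldots,a_n)$, the output is quasi-isomorphic to $\CL_1^{a_1}\otimes\cdots\otimes\CL_n^{a_n}$, and that the two ways of taking the ``trace'' (Markov trace on one side, derived global sections on the other) match.

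More concretely, I would proceed in the following steps. First, recall that the Jucys--Murphy elements $\ell_i$ act on the Hecke category by a full twist on the first $i$ strands divided by the full twist on the first $i-1$ strands; under the OR functor these are known to correspond to the line bundles $\CL_i$ (this is the main content of the OR line-bundle construction, and the key case of $\ell_1=1$ versus $\CL_1=\CO$ matches the convention $z_1=1$). Thus the image of $\ell_1^{a_1}\cdots\ell_n^{a_n}$ is the tensor product $\CL_1^{a_1}\otimes\cdots\otimes\CL_n^{a_n}$. Second, the remaining factor $t_1\cdots t_{n-1}$ in $\beta(a_1,\ldots,a_n)$ is precisely the Coxeter braid that closes up the picture to a knot; on the geometric side its role is exactly to pass from the full equivariant $K$-theory to the punctual flag Hilbert scheme, producing the total cohomology rather than a partial Euler characteristic. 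The third step is the Atiyah--Bott--Lefschetz localization: the $(\CC^*)^2$-fixed points of $\FHilb^n(\CC^2,0)$ are standard tableaux of size $n$, the weight of $\CL_k$ at the fixed point $T$ is the $(q,t)$-content $z_k(T)$, and the tangent weights at $T$ produce exactly the denominator $\prod_{i=2}^n(1-z_i^{-1})(1-qtz_{i-1}/z_i)\prod_{i<j}(1-qz_i/z_j)(1-tz_i/z_j)$ and the numerator $\prod_{i<j}(1-z_i/z_j)(1-qtz_i/z_j)$ appearing in~\eqref{eq: def f}. Hence the equivariant Euler characteristic of $\CL_1^{a_1}\otimes\cdots\otimes\CL_n^{a_n}$ is precisely $f(a_1,\ldots,a_n)$, which by Proposition~\ref{prop: F polynomial} is already a polynomial, so no higher cohomology cancellation needs to be treated separately on that side.

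The final, and hardest, step is to match this geometric Euler characteristic with the Khovanov--Rozansky homology of the \emph{closure} of $\beta(a_1,\ldots,a_n)$. The known tools are: (i) the Hogancamp--Mellit \cite{Hogancamp.2017,Mellit.2017} recursive computation of HHH for torus knots, which already establishes the conjecture in the rational slope case $a_i=S_i(m,n)$; (ii) the categorical diagonalization of the full twist by Elias--Hogancamp \cite{EH.2019}, which expresses any Rouquier complex as a sum of idempotents indexed by tableaux, matching the sum $\sum_T$ in~\eqref{eq: def f}; and (iii) the fact that the derived global sections on $\FHilb^n(\CC^2,0)$ categorify the Markov trace under the OR functor. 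The proof should proceed by combining these: decompose $\ell_1^{a_1}\cdots\ell_n^{a_n}$ into the Elias--Hogancamp projectors $P_T$, apply the OR functor to each summand, localize at the corresponding fixed point, and check that the trace of $t_1\cdots t_{n-1}$ against $P_T$ contributes exactly the weight $\wt(T)$.

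The main obstacle is this last step: constructing and verifying the categorical compatibility of the OR/GNR functor with Hochschild homology and the Markov trace in sufficient generality to cover arbitrary integer sequences $a_i$, rather than only the torus-knot slopes where symmetry and recursion sidestep the issue. All the partial results listed rely at some point on a positivity or recursion argument that is special to the rational case $a_i=S_i(m,n)$. A uniform proof will likely require either a new categorical lift of the Atiyah--Bott formula directly in the Hecke category, or a direct geometric identification of the KR complex with a Koszul-type resolution of $\CL_1^{a_1}\otimes\cdots\otimes\CL_n^{a_n}$ on $\FHilb^n(\CC^2,0)$, and it is exactly this identification that is still missing from the literature.
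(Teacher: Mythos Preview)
The statement you are addressing is a \emph{conjecture} in the paper, not a theorem; the paper offers no proof and does not claim one. It is presented as motivation, attributed to the circle of ideas in \cite{GNR.2016,OR.2017}, and then the paper moves on to study the combinatorial polynomial $F(a_2,\ldots,a_n)$ that arises as the equivariant Euler characteristic on the right-hand side. So there is no ``paper's own proof'' to compare your proposal against.

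What you have written is not a proof either, and you essentially say so yourself in the final paragraph: the categorical compatibility of the OR/GNR functor with the Markov trace, for general $a_i$, is precisely the missing ingredient, and your proposal does not supply it. The earlier steps you outline (Jucys--Murphy $\leadsto$ line bundles, localization at tableaux fixed points recovering the weight $\wt(T)$) are the standard heuristic behind the conjecture and are already the content of the cited references; they explain why the conjecture is plausible, not why it is true. In particular, matching Euler characteristics (your third step, which is what the paper actually uses) is strictly weaker than the conjectured isomorphism of graded vector spaces, so even a complete version of that step would not close the gap. Your proposal is best read as a survey of the expected proof architecture rather than a proof; the honest status is that the conjecture remains open.
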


For small values of $n$, the geometry of $\FHilb^n(\CC^2,0)$ can be described explicitly. For $n=2$ we have 
\[
	\FHilb^2(\CC^2,0)=\mathbb{P}^1,\ \CL_2=\CO(1),
\]
so 
\[
	H^{\bullet}(\FHilb^2(\CC^2,0),\CL_1^{a_1} \CL_2^{a_2})=H^{\bullet}(\mathbb{P}^1,\CO(a_2)).
\]
Furthermore, for $a_2\geqslant 0$ higher cohomology vanishes and the $(\CC^*)^2$-equivariant character of the space of 
global sections agrees with $F(a_2)$. 

For $n=3$ the space $\FHilb^3(\CC^2,0)$ is a smooth cubic Hirzebruch surface, 
and the line bundles $\CL_1^{a_1}\CL_2^{a_2}\CL_3^{a_3}$ and their cohomology can be described explicitly for 
all $a_1,a_2,a_3$, see \cite{GNR.2016}. 
Indeed, there is a natural projection $\pi \colon \FHilb^3(\CC^2,0)\to \FHilb^2(\CC^2,0)=\mathbb{P}^1$
and  for $a_3\geqslant 0$ one has 
\[
	\pi_*\CL_3^{a_3}=\mathrm{Sym}^{a_3}(\CO(2)\oplus\CO(-1))
	=\CO(2a_3)\oplus \CO(2a_3-3)\oplus\cdots\oplus \CO(-a_3),
\]
so
\begin{multline}
\label{eq: homology fhilb3}
	H^{\bullet}(\FHilb^3(\CC^2,0),\CL_1^{a_1}\CL_2^{a_2}\CL_3^{a_3})
	=H^{\bullet}(\mathbb{P}^1,\CO(a_2)\otimes \pi_*\CL_3^{a_3})\\
	=H^{\bullet}(\mathbb{P}^1,\CO(2a_3+a_2)\oplus\cdots \oplus \CO(a_2-a_3)).
\end{multline}
In particular, for $a_2\geqslant a_3$ higher cohomology vanishes and the $(\CC^*)^2$-equivariant character of the space 
of global sections agrees with $F(a_2,a_3)$, compare \eqref{eq: homology fhilb3} with Example \ref{example.2 and 3}. 

\begin{remark}
For $(a_2,a_3)=(0,2)$ we obtain by \eqref{eq: homology fhilb3}:
\[
	H^{\bullet}(\FHilb^3(\CC^2,0),\CL_3^{2})=H^{\bullet}(\mathbb{P}^1,\CO(4)\oplus\CO(1)\oplus\CO(-2)).
\]
Note that $H^1(\mathbb{P}^1,\CO(-2))$ is one-dimensional, which corresponds to the negative term in
\[
	F(0,2)=q^4 + q^3t + q^2t^2 + qt^3 + t^4 + q^2t + qt^2 - qt.
\]
\end{remark}

However, for $n\geqslant 4$ the spaces $\FHilb^n(\CC^2,0)$ become very singular and reducible. Still, they
carry a natural virtual structure sheaf, and one can use virtual localization techniques to prove the identity
\[
	\chi_{(\CC^*)^2}(\FHilb^n(\CC^2,0),\CL_1^{a_1}\otimes\cdots\otimes \CL_n^{a_n})=F(a_2,\ldots,a_n).
\]
Here on the left hand side, we obtain the $(\CC^*)^2$-equivariant Euler characteristic which can be computed as 
an explicit sum over fixed points of $(\CC^*)^2$ or, equivalently, over standard Young tableaux. This sum agrees 
with~\eqref{eq: def F}. We refer the reader to \cite{GN.2015} and \cite{GNR.2016} for further details.

It is important to point out that, although the polynomial $F(a_2,\ldots,a_n)$ has a geometric interpretation, this does 
not immediately imply Conjecture~\ref{conj:positivity}. Indeed, for $n=2,3$ this follows from vanishing of higher cohomology, 
but no such vanishing results are available yet for $n\geqslant 4$. It would be interesting to compare the results of this paper 
with the geometry of $\FHilb^4(\CC^2,0)$.  See \cite[Section 1.4]{GNR.2016} and \cite[Conjecture 6.4.2]{OR.2017} for more
on the geometric context.

\subsection{Tesler matrices}
\label{subsec:Tesler}

To prove Proposition~\ref{prop: F polynomial}, we need to use the formalism of Tesler matrices, developed 
in~\cite{Haglund.2011,AGHRS.2012,GHX.2014}. Given a sequence $a=(a_1,a_2,\ldots,a_n)$ of nonnegative integers, 
we define a \defn{Tesler matrix} to be an upper-triangular matrix $M=(m_{ij})_{j\geqslant i}$ with nonnegative integer 
coefficients $m_{ij}\geqslant 0$ satisfying a system of linear equations
\begin{equation}
\label{eq: tesler}
	m_{ii}+\sum_{j<i}m_{ji}-\sum_{j>i}m_{ij}=a_i\  \qquad \text{for}\ 1\leqslant i \leqslant n.
\end{equation}

\begin{lemma}
\label{lem:tesler finite}
The set of Tesler matrices is finite for fixed $a$.
\end{lemma}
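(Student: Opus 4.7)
The plan is to bound each entry of a Tesler matrix by a fixed quantity depending only on $a$; since entries are nonnegative integers, finitely many such matrices exist.

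First I would sum the defining relation \eqref{eq: tesler} over all $i=1,\ldots,n$. The two double sums $\sum_{i}\sum_{j<i} m_{ji}$ and $\sum_{i}\sum_{j>i} m_{ij}$ both range over ordered pairs $(j,i)$ with $j<i$ (after relabeling) and hence cancel. What remains is the identity
\[
\sum_{i=1}^{n} m_{ii} = \sum_{i=1}^{n} a_i,
\]
so each diagonal entry is bounded by $|a|:=a_1+\cdots+a_n$.

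Next, to control the off-diagonal entries, I would carry out the same partial-sum computation on the first $k$ equations only. The same cancellation argument shows that the pairs $(j,i)$ with $1\leqslant j<i\leqslant k$ cancel between the two double sums, leaving the pairs with $i\leqslant k<j\leqslant n$ uncancelled. The resulting identity is
\[
\sum_{i=1}^{k} m_{ii} \;=\; \sum_{i=1}^{k} a_i \;+\; \sum_{\substack{i\leqslant k\\ j>k}} m_{ij},
\]
valid for every $1\leqslant k\leqslant n$. Combined with $\sum_{i=1}^{n} m_{ii}=|a|$ and the nonnegativity of every $m_{ij}$, this rearranges to
\[
\sum_{\substack{i\leqslant k\\ j>k}} m_{ij} \;=\; \sum_{i=1}^{k} m_{ii} - \sum_{i=1}^{k} a_i \;\leqslant\; \sum_{i=k+1}^{n} a_i \;\leqslant\; |a|.
\]

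Since every off-diagonal entry $m_{ij}$ with $i<j$ appears in the northeast rectangle sum for $k=i$ (say), each off-diagonal entry is also bounded by $|a|$. Hence every one of the $\binom{n+1}{2}$ entries of $M$ is a nonnegative integer in the interval $[0,|a|]$, and the number of Tesler matrices with row-sum vector $a$ is at most $(|a|+1)^{\binom{n+1}{2}}$, which is finite. The argument is elementary; the only care required is in the index bookkeeping for the partial-sum telescoping, which I would present as a single lemma to be reused in Section~\ref{subsec:Tesler}.
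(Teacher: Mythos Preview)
Your proof is correct and takes essentially the same approach as the paper: summing the defining relations \eqref{eq: tesler} over a range of indices to obtain a telescoping identity that bounds every entry by $|a|=a_1+\cdots+a_n$. The only cosmetic difference is that the paper sums over $i,\ldots,n$ (obtaining \eqref{eq: tesler 2}) and reads off the bound for all entries in one line, whereas you sum over $1,\ldots,k$ and handle the diagonal and off-diagonal entries in two separate steps.
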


\begin{proof}
Equation~\eqref{eq: tesler} can be rewritten as follows:
\begin{equation}
\label{eq: tesler 2}
	m_{ii}+\cdots+m_{nn}+\sum_{j<i,k\geqslant i}m_{jk}=a_i+\cdots+a_n.
\end{equation} 
Since all $m_{ij}$ are nonnegative integers, we obtain $m_{ij}\leqslant a_1+\cdots+a_n$ for all $i,j$.
\end{proof}

Given a sequence $(a_2,\ldots,a_n)$, we define a partition or Young diagram
$$\lambda(a)=(a_2+\cdots+a_n,\ldots,a_n)$$ (note that $a_1$ is not used). Let us call a Tesler matrix \defn{two-diagonal},
if $m_{ij}=0$ for $j>i+1$.

\begin{lemma}
\label{lem:subdiagrams}
There is a bijection between the set of two-diagonal Tesler matrices associated to $a=(a_1,\ldots,a_n)$ and the set 
of subdiagrams of $\lambda(a_2,\ldots,a_n)$.
\end{lemma}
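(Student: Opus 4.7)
The plan is to parametrize a two-diagonal Tesler matrix by the $n-1$ entries on its superdiagonal and then translate the remaining Tesler constraints directly into the defining inequalities of a subdiagram of $\lambda(a)$.

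First I would unpack the Tesler equations under the two-diagonal hypothesis. Since $m_{ij}=0$ for $j>i+1$, equation~\eqref{eq: tesler} collapses to the three-term recursion
\begin{equation*}
   m_{ii} + m_{i-1,i} - m_{i,i+1} = a_i \qquad (1\le i\le n),
\end{equation*}
with the conventions $m_{0,1}=m_{n,n+1}=0$. Setting $b_i:=m_{i,i+1}$ for $1\le i\le n-1$, the diagonal entries are fully determined by the $b_i$:
\begin{equation*}
   m_{11}=a_1+b_1,\qquad m_{ii}=a_i-b_{i-1}+b_i\ (1<i<n),\qquad m_{nn}=a_n-b_{n-1}.
\end{equation*}
Thus a two-diagonal Tesler matrix is the same data as a tuple $(b_1,\ldots,b_{n-1})$ of nonnegative integers for which the displayed diagonal entries are also nonnegative. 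Since $a_1\ge 0$, the first inequality is automatic; the remaining ones read $b_{i-1}-b_i\le a_i$ for $2\le i\le n-1$ and $b_{n-1}\le a_n$.

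Next I would set up the bijection. Recalling that $\lambda(a)_i=a_{i+1}+\cdots+a_n$ for $1\le i\le n-1$, define
\begin{equation*}
   \mu_i := \lambda(a)_i - b_i \qquad (1\le i\le n-1).
\end{equation*}
The condition $b_i\ge 0$ is exactly $\mu_i\le \lambda(a)_i$. The weak-decrease condition $\mu_i\ge \mu_{i+1}$ for $1\le i\le n-2$ translates, after using $\lambda(a)_i-\lambda(a)_{i+1}=a_{i+1}$, into $b_{i+1}\ge b_i-a_{i+1}$, which is exactly the nonnegativity of $m_{i+1,i+1}$ written for the shifted index. The condition $\mu_{n-1}\ge 0$ is exactly $b_{n-1}\le a_n$, i.e.\ $m_{nn}\ge 0$. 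Hence the assignment lands in partitions $\mu\subseteq\lambda(a)$.

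Finally I would check that the map is a bijection by writing down the inverse: given $\mu\subseteq\lambda(a)$, set $b_i=\lambda(a)_i-\mu_i$ and read off the diagonal entries from the formulas above; the inequalities defining a subdiagram ensure all $m_{ii}\ge 0$, producing a valid two-diagonal Tesler matrix, and the two constructions are manifestly inverse. No step looks like a serious obstacle: the only care required is the bookkeeping of indices and boundary cases ($i=1$ and $i=n$), together with the observation that $a_1$ does not appear in $\lambda(a)$ and only affects $m_{11}$, which is unconstrained beyond nonnegativity.
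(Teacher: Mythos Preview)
Your argument is correct and produces the same bijection as the paper. The only cosmetic difference is the choice of coordinates: the paper sums the Tesler equations to get $m_{ii}+\cdots+m_{nn}+m_{i-1,i}=a_i+\cdots+a_n$ and reads off $\mu_{i-1}=m_{ii}+\cdots+m_{nn}$, while you parametrize by the superdiagonal $b_i=m_{i,i+1}$ and set $\mu_i=\lambda(a)_i-b_i$; since $m_{ii}+\cdots+m_{nn}=\lambda(a)_{i-1}-b_{i-1}$, these are literally the same map.
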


\begin{proof}
If $M$ is a two-diagonal Tesler matrix, then for $i\geqslant 2$ \eqref{eq: tesler 2} simplifies to
\[
	m_{ii}+\cdots+m_{nn}+m_{i-1,i}=a_i+\cdots+a_n,
\]
while for $i=1$ we obtain
\[
    m_{11}+\cdots+m_{nn}=a_1+\cdots+a_n.
\]
This means that for  $i\geqslant 2$ the diagonal elements of $M$ define a subdiagram of $\lambda(a)$
\[
	m_{ii}+\cdots+m_{nn}\leqslant a_i+\cdots+a_n=\lambda_{i-1},
\]
while $m_{11}$ and all $m_{i-1,i}$ are uniquely determined by the diagonal. 
\end{proof}

 We define the functions $A(m)$ and $B(m)$ by the equations
\begin{equation*}
\begin{split}
	\sum_{m=0}^{\infty}A(m)z^m &= \frac{(1-z)(1-qtz)}{(1-qz)(1-tz)}=1-(1-q)(1-t)\frac{z}{(1-qz)(1-tz)}\\
	& \qquad \qquad \qquad \qquad \; = 1-(1-q)(1-t)\sum_{m=1}^{\infty}[m]_{q,t}z^m,\\
	\sum_{m=0}^{\infty}B(m)z^m &= \frac{1-z}{(1-qz)(1-tz)}=\sum_{m=0}^{\infty}([m+1]_{q,t}-[m]_{q,t})z^m.
\end{split}
\end{equation*}

\begin{theorem}
For all $a_i\geqslant 0$, we have
\begin{equation}
\label{eq:tesler big sum}
	F(a_2,\ldots,a_n)=\sum_{M} \prod_{i}B(m_{i,i+1})\prod_{j>i+1}A(m_{i,j}), 
\end{equation}
where the sum is over all Tesler matrices $M$ satisfying~\eqref{eq: tesler}.
\end{theorem}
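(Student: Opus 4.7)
The strategy, which closely follows \cite[Section 6.5]{GN.2015} and the Tesler-matrix calculations of \cite{AGHRS.2012}, is to realize both sides of~\eqref{eq:tesler big sum} as the same constant term of a single rational function in the auxiliary variables $z_2,\ldots,z_n$.

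For the Tesler side, I first rewrite $\wt(T)$ in a form suited to power-series expansion. The identity
\[
    \frac{\omega(z_{i-1}/z_i)}{1-qt\,z_{i-1}/z_i}=\frac{1-z_{i-1}/z_i}{(1-qz_{i-1}/z_i)(1-tz_{i-1}/z_i)}=\sum_{m\geqslant 0}B(m)\,(z_{i-1}/z_i)^m
\]
absorbs each nearest-neighbor $\omega$-factor into the corresponding subdiagonal pole $(1-qt\,z_{i-1}/z_i)^{-1}$. The remaining $\omega$-factors $\omega(z_i/z_j)$ with $j>i+1$ expand directly as $\sum_{m\geqslant 0}A(m)(z_i/z_j)^m$, and each surviving $(1-z_i^{-1})^{-1}$ expands as $\sum_{m_{ii}\geqslant 0}z_i^{-m_{ii}}$. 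Setting $G(z_2,\ldots,z_n):=z_2^{a_2}\cdots z_n^{a_n}\,\wt(\vec z)$ with $z_1=1$, these expansions combine to give
\[
    G=\sum_{\{m_{ij}\}_{i\leqslant j}}\prod_{i}B(m_{i,i+1})\prod_{j>i+1}A(m_{ij})\cdot\prod_{i=2}^n z_i^{\,a_i-m_{ii}+\sum_{j>i}m_{ij}-\sum_{j<i}m_{ji}}.
\]
Extracting the coefficient of $z_2^0\cdots z_n^0$ imposes precisely the Tesler equations~\eqref{eq: tesler} for $i\geqslant 2$, producing the right-hand side of~\eqref{eq:tesler big sum}; the Tesler equation for $i=1$ determines $m_{11}\geqslant 0$ automatically and does not appear in the summand. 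The sum is finite by Lemma~\ref{lem:tesler finite}.

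The crux of the argument is to identify this same constant term with $F(a_2,\ldots,a_n)$. By the residue theorem, $[z_i^0]G$ equals the sum of residues of $G\,\frac{dz_i}{z_i}$ at all finite poles in $z_i$. Iterating the residues over $i=2,\ldots,n$, the finite poles of $G$ come from $(1-z_i^{-1})^{-1}$, $(1-qt\,z_{i-1}/z_i)^{-1}$, and the $\omega$-factor denominators $(1-qz_h/z_i)^{-1},(1-tz_h/z_i)^{-1}$ for $h<i$. The numerator zeros of the $\omega$-factors cancel precisely those poles that would correspond to non-admissible box placements (for example, the pole at $z_i=1$ is cancelled by the zero of $\omega(z_1/z_i)$, and the pole at $z_i=qt\,z_{i-1}$ by the zero of $\omega(z_{i-1}/z_i)$). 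The surviving residues bijectively correspond to the standard Young tableaux of size $n$, and each evaluates (with the cancellation convention of~\eqref{eq: def F} for coincident pole-zero pairs) to $z_2(T)^{a_2}\cdots z_n(T)^{a_n}\,\wt(T)$. Summing over $T$ then yields $F(a_2,\ldots,a_n)$.

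The main technical obstacle is precisely this last bookkeeping: verifying that after all $\omega$-zero cancellations, the iterated residues stand in bijection with standard Young tableaux, that each contributes exactly its tableau summand, and that no spurious or doubly-counted residues remain. The analysis parallels the one in \cite{AGHRS.2012} and \cite[Section 6.5]{GN.2015}, and the delicate point is that several possible orderings of the iterated residues could produce intermediate expressions with apparently extraneous poles; these must be shown to cancel in the final sum. Once~\eqref{eq:tesler big sum} is established, Proposition~\ref{prop: F polynomial} follows immediately, since each summand $\prod B\prod A$ is a polynomial in $q$ and $t$ with integer coefficients and the sum is finite by Lemma~\ref{lem:tesler finite}.
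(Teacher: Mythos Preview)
Your strategy is essentially the paper's: both sides of~\eqref{eq:tesler big sum} are identified with one and the same iterated contour integral of the rational function $\Phi_a$ (equivalently, one and the same iterated constant term), evaluated in two different ways. Your extraction of the Tesler sum via the power-series expansion in the region $1\ll|z_2|\ll\cdots\ll|z_n|$ is correct and matches the paper's computation of the residue at infinity after the change of variables $u_i=z_i^{-1}$.

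On the tableau side your phrasing needs care. The statement that ``$[z_i^0]G$ equals the sum of residues of $G\,\frac{dz_i}{z_i}$ at all finite poles in $z_i$'' is misleading in the iterated setting. The expansion you chose corresponds to contours $|z_i|=r_i$ with $1\ll r_2\ll\cdots\ll r_n$; when you shrink the $z_i$-contour you pick up only the poles \emph{inside} it, namely $z_i=qz_h,\,tz_h$ for $h<i$ (plus the cancelled poles $z_i=1$ and $z_i=qtz_{i-1}$). The equally finite poles $z_i=z_j/q,\,z_j/t$ coming from the denominators of $\omega(z_i/z_j)$ with $j>i$ lie \emph{outside} that contour and must not be counted. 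Your list of surviving poles happens to be the correct one, but the justification is not ``all finite poles''; it is the choice of radii. The paper makes this explicit by fixing $1\ll r_1\ll\cdots\ll r_n$ and computing the integral in the order $z_1,z_2,\ldots,z_n$ to obtain the tableau sum, then reversing the order to obtain the Tesler expansion via residues at infinity. With that clarification, your argument coincides with the paper's.
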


\begin{proof}
The proof is very similar to \cite[Section 6.5]{GN.2015}, but we present it here for completeness. 
Since~\eqref{eq:tesler big sum} is an identity between rational functions in $q$ and $t$, without loss of generality 
we may assume that $q$ and $t$ are complex numbers very close to 1. Pick real numbers $1\ll r_1\ll \cdots \ll r_n$, 
and consider the torus
\[
	T=\{|z_1|=r_1,\ldots,|z_n|=r_n\}\subset \mathbb{C}^n.
\]
Given $a=(a_1,\ldots,a_n)$, consider the rational function 
\[
	\Phi_a(z_1,\ldots,z_n)=z_1^{a_1}\cdots z_n^{a_n}
	\prod_{i=1}^{n}\frac{1}{(1-z_i^{-1})}\prod_{i=2}^{n}\frac{1}{(1-qtz_{i-1}/z_i)}
	\prod_{i<j}\omega(z_i/z_j).
\]
We would like to prove that the integral 
\[
	I(a_1,\ldots,a_n)=\int_T \Phi_a(z_1,\ldots,z_n) \frac{dz_1}{2\pi iz_1}\cdots \frac{dz_n}{2\pi iz_n}
\]
equals both the left and the right hand side of~\eqref{eq:tesler big sum}.
First, we can write it as an iterated integral
\[
	I(a_1,\ldots,a_n)=\int_{|z_n|=r_n}\cdots \int_{|z_1|=r_1} 
	\Phi_a(z_1,\ldots,z_n) \frac{dz_1}{2\pi iz_1}\cdots \frac{dz_n}{2\pi iz_n}.
\]
Given $z_2,\ldots,z_n$, the possible poles of $\Phi_a(z_1,\ldots,z_n)$ in $z_1$ are at $z_1=1$, $z_1=z_k/q$ and 
$z_1=z_k/t$. By our choice of $r_i$, we observe that $z_1=1$ is the only pole inside the circle $|z_1|=r_1$, so the integral
\[
	R_1(z_2,\ldots,z_n)=\int_{|z_1|=r_1} \Phi_a(z_1,\ldots,z_n) \frac{dz_1}{2\pi iz_1}
\]
equals the residue at this pole, which is an explicit function in $z_2,\ldots,z_n$. Similarly, it is easy to see that for fixed 
$z_3,\ldots,z_n$ the only poles of $R_1(z_2,\ldots,z_n)$ are $z_2=q$ and $z_2=t$ (see Example \ref{example:residue 2}) 
and compute the integral
\[
	R_2(z_3,\ldots,z_n)=\int_{|z_2|=r_2}R_1(z_2,\ldots,z_n)\frac{dz_2}{2\pi iz_2}
\]
as a sum of residues at these poles. More generally, one can prove that for $a_i\geqslant 0$ the only poles that appear 
in the computation of $I(a_1,\ldots,a_n)$ are at points $(z_1,\ldots,z_n)$ corresponding 
to the $(q,t)$--contents of all standard tableaux, and \eqref{eq: def F} can be interpreted as a sum of residues 
at these poles. Therefore $I(a_1,\ldots,a_n)=F(a_2,\ldots,a_n)$.

On the other hand, we can change the order of integration and write
\[
	I(a_1,\ldots,a_n)=
	\int_{|z_1|=r_1}\cdots \int_{|z_n|=r_n} \Phi_a(z_1,\ldots,z_n) \frac{dz_n}{2\pi iz_n}\cdots \frac{dz_1}{2\pi iz_1}.
\]
For fixed $z_1,\ldots,z_{n-1}$ the possible poles are at $z_n=1,z_n=qz_k$ and $z_n=tz_k$ (note that the denominators
$(1-qt z_{i-1}/z_i)$ cancel out) which are all inside the circle $|z_n|=r_n$. Therefore the integral can be written as a residue
at infinity
\[
	\int_{|z_n|=r_n} \Phi_a(z_1,\ldots,z_n) \frac{dz_n}{2\pi iz_n}=-\Res_{z_n=\infty}\Phi_a(z_1,\ldots,z_n) \frac{dz_n}{2\pi iz_n},
\]
and similarly we have the iterated residue at infinity
\[
	I(a_1,\ldots,a_n)=(-1)^{n}\Res_{z_1=\infty}\cdots \Res_{z_n=\infty} \Phi_a(z_1,\ldots,z_n) 
	\frac{dz_n}{2\pi iz_n}\cdots \frac{dz_1}{2\pi iz_1}.
\]
To deal with these residues properly, we introduce new variables $u_i=z_i^{-1}$. Note that $z_i/z_j=u_j/u_i$. 
Hence $I(a_1,\ldots,a_n)$ equals
\[
	\Res_{u_1=0}\cdots \Res_{u_n=0} u_1^{-a_1}\cdots u_n^{-a_n}
	\prod_{i=1}^{n}\frac{1}{(1-u_i)}\prod_{i=2}^{n}\frac{1}{(1-qtu_{i}/u_{i-1})}
	\prod_{i<j}\omega(u_j/u_i)\frac{du_n}{2\pi iu_n}\cdots \frac{du_1}{2\pi iu_1}.
\]
which is precisely the coefficient of the rational function
\[
	\prod_{i=1}^{n}\frac{1}{(1-u_i)} \prod_{i=2}^{n}\frac{1}{(1-qtu_{i}/u_{i-1})}\prod_{i<j}\omega(u_j/u_i)
\]
at $u_1^{a_1}\cdots u_n^{a_n}$. On the other hand, we can expand the rational function as follows:
\begin{multline}
	\prod_{i=1}^{n}\frac{1}{(1-u_i)}\times \prod_{i=1}^{n-1}\frac{(1-u_{i+1}/u_{i})}{(1-qu_{i+1}/u_{i})(1-tu_{i+1}/u_{i})}\times
	\prod_{j>i+1}\omega(u_j/u_i)\\
	=\sum_{m_{ii}}u_i^{m_{ii}}\times \sum_{m_{i,i+1}}B(m_{i,i+1})\left(\frac{u_{i+1}}{u_{i}}\right)^{m_{i,i+1}}\times 
	\sum_{m_{i,j}}A(m_{i,j})\left(\frac{u_j}{u_{i}}\right)^{m_{i,j}}.
\label{eq: infinity expansion}
\end{multline}
The terms in the sum in~\eqref{eq: infinity expansion} are parameterized by the exponents $m_{ii},m_{i,i+1},m_{i,j}$ 
which can be combined in a single upper-triangular matrix $M=(m_{ij})$. Such a term contributes to 
$u_1^{a_1}\cdots u_n^{a_n}$ if
\[
	m_{ii}-\sum_{j>i}m_{ij}+\sum_{j<i}m_{ji}=a_i,
\]
which is precisely the Tesler matrix condition \eqref{eq: tesler}.
\end{proof}

\begin{example}
\label{example:residue 2}
For $n=2$, we have
\[
	\Phi_a(z_1,z_2)=
	\frac{z_1^{a_1}z_2^{a_2}(1-z_1/z_2)}{(1-z_1^{-1})(1-z_2^{-1})(1-qz_1/z_2)(1-tz_1/z_2)}.
\]
For fixed $z_2$, the poles are at $z_1=1,z_1=z_2/q$ and $z_1=z_2/t$, and only the first one is inside the circle $|z_1|=r_1$. 
Therefore
\[
	R_1(z_2)=\int_{|z_1|=r_1}\Phi_a(z_1,z_2)\frac{dz_1}{2\pi iz_1}
	=\frac{z_2^{a_2}(1-1/z_2)}{(1-z_2^{-1})(1-q/z_2)(1-t/z_2)}=\frac{z_2^{a_2}}{(1-q/z_2)(1-t/z_2)}.
\]
At the first step we compute the residue at $z_1=1$, and at the second we cancel the factors $(1-z_2^{-1})$. Now 
$R_1(z_2)$ has poles at $z_2=q$ and $z_2=t$, and the residues of $R_2(z_2)\frac{dz_2}{2\pi iz_2}$ are equal to 
$\frac{q^{a_2}}{1-t/q}$ and $\frac{t^{a_2}}{1-q/t}$, respectively.  

To compute the residue at infinity, we write $u_i=z_i^{-1}$ and
\[
	I(a_1,a_2)=\Res_{u_1=0}\Res_{u_2=0}\frac{u_1^{-a_1}u_2^{-a_2}(1-u_2/u_1)}
	{(1-u_1)(1-u_2)(1-qu_2/u_1)(1-tu_2/z_1)}\frac{du_2}{2\pi iu_2}\frac{du_1}{2\pi i u_1}.
\]
Now we expand
\begin{equation*}
\begin{split}
	&\frac{1}{1-u_1}=\sum_{m_{11}\geqslant 0}u_1^{m_{11}},\ 
	\frac{1}{1-u_2}=\sum_{m_{22}\geqslant 0}u_2^{m_{22}},\\
	&\frac{1-u_2/u_1}{(1-qu_2/u_1)(1-tu_2/u_1)}=\sum_{m_{12}\geqslant 0}B(m_{12})(u_2/u_1)^{m_{12}}.
\end{split}
\end{equation*}
By multiplying these three series and picking up the coefficient at $u_1^{a_1}u_2^{a_2}$ we get $m_{11}-m_{12}=a_1$, 
$m_{22}+m_{12}=a_2$, so $m_{11}$ and $m_{22}$ are determined by $m_{12}\leqslant a_2$ and
\[
	I(a_1,a_2)=\sum_{m_{12}\leqslant a_2}B(m_{12})=[a_2+1]_{q,t}.
\]
\end{example}

\begin{corollary}
For all $a_i\geqslant 0$ the function $F(a_2,\ldots,a_r)$ is a polynomial in $q$ and $t$. 
\end{corollary}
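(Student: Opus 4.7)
The plan is to combine the Tesler matrix formula \eqref{eq:tesler big sum} just established with two elementary observations: the sum over $M$ is finite, and each factor in the summand is itself a polynomial in $q$ and $t$ with integer coefficients. Once both are in place, the corollary is immediate because a finite sum of polynomials with integer coefficients is a polynomial with integer coefficients.

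First I would invoke Lemma~\ref{lem:tesler finite}, which already tells us that the set of Tesler matrices $M$ satisfying \eqref{eq: tesler} for a fixed $a = (a_1,\ldots,a_n)$ is finite. So the sum on the right-hand side of \eqref{eq:tesler big sum} has only finitely many terms, and the only remaining issue is the nature of each term.

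Next I would unpack the defining generating functions of $A(m)$ and $B(m)$. From
\[
\sum_{m=0}^\infty A(m)z^m = 1 - (1-q)(1-t)\sum_{m=1}^\infty [m]_{q,t} z^m
\]
one reads off $A(0) = 1$ and $A(m) = -(1-q)(1-t)[m]_{q,t}$ for $m \geqslant 1$; similarly,
\[
B(m) = [m+1]_{q,t} - [m]_{q,t}.
\]
Since $[m]_{q,t} = q^{m-1} + q^{m-2}t + \cdots + t^{m-1}$ is a polynomial in $q,t$ with integer coefficients, both $A(m)$ and $B(m)$ are polynomials in $q,t$ with integer coefficients for every $m \geqslant 0$.

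Combining these two facts, each summand $\prod_i B(m_{i,i+1}) \prod_{j > i+1} A(m_{i,j})$ in \eqref{eq:tesler big sum} is a polynomial in $q,t$ with integer coefficients, and the sum being finite, so is $F(a_2,\ldots,a_n)$. There is really no hard step: once Proposition~\ref{prop: F polynomial}'s route via Tesler matrices is set up and proved (which has already been done in the previous theorem), the corollary is a two-line consequence. The only subtlety worth flagging is that the generating functions for $A$ and $B$ are presented as rational functions, so one must spend a sentence rewriting them as explicit polynomial expressions in $q$ and $t$ to make integrality transparent.
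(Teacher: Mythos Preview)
Your proof is correct and follows exactly the same approach as the paper's: invoke Lemma~\ref{lem:tesler finite} for finiteness of the Tesler sum, observe that $A(m)$ and $B(m)$ are polynomials in $q,t$ for $m\geqslant 0$, and conclude. The paper's version is terser (it does not spell out the explicit formulas for $A(m)$ and $B(m)$), but the argument is identical.
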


\begin{proof}
Indeed, by Lemma \ref{lem:tesler finite} there are finitely many terms in the sum~\eqref{eq:tesler big sum}, and for all 
$m\geqslant 0$ both $A(m)$ and $B(m)$ are polynomials in $q$ and $t$.
\end{proof}

\begin{corollary}
The specialization of $F(a_2,\ldots,a_r)$ at $t=1$ agrees with the sum
\[
	\sum_{\mu\subseteq \lambda(a)}q^{|\lambda(a)|-|\mu|},
\]
where $a=(a_2,\ldots,a_n)$.
\end{corollary}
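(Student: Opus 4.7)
The plan is to specialize the Tesler matrix formula \eqref{eq:tesler big sum} at $t=1$, observe that it collapses dramatically onto two-diagonal matrices, and then invoke Lemma \ref{lem:subdiagrams}.

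First I would compute the $t=1$ specializations of the building blocks $A(m)$ and $B(m)$. From the generating function defining $A(m)$,
$$\sum_{m\geqslant 0}A(m)\,z^m=\frac{(1-z)(1-qtz)}{(1-qz)(1-tz)},$$
setting $t=1$ produces the constant $1$, so $A(0)\big|_{t=1}=1$ and $A(m)\big|_{t=1}=0$ for every $m\geqslant 1$. Similarly, from
$$\sum_{m\geqslant 0}B(m)\,z^m=\frac{1-z}{(1-qz)(1-tz)},$$
setting $t=1$ yields $\frac{1}{1-qz}=\sum_{m\geqslant 0}q^m z^m$, so $B(m)\big|_{t=1}=q^m$.

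Substituting into \eqref{eq:tesler big sum}, the factor $\prod_{j>i+1}A(m_{ij})$ vanishes unless $m_{ij}=0$ for all $j>i+1$; that is, only two-diagonal Tesler matrices survive at $t=1$. By Lemma \ref{lem:subdiagrams}, such matrices are in bijection with subdiagrams $\mu\subseteq\lambda(a)$. For a two-diagonal $M$, the remaining factor is $\prod_{i=1}^{n-1}B(m_{i,i+1})\big|_{t=1}=q^{\sum_{i=1}^{n-1}m_{i,i+1}}$.

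To finish, I would identify the exponent with $|\lambda(a)|-|\mu|$. The Tesler condition \eqref{eq: tesler 2} restricted to two-diagonal matrices gives, for each $i\geqslant 2$,
$$m_{i-1,i}=\lambda_{i-1}-(m_{ii}+\cdots+m_{nn})=\lambda_{i-1}-\mu_{i-1},$$
where $\mu_{i-1}:=m_{ii}+\cdots+m_{nn}$ defines the associated subdiagram of $\lambda(a)$ as in the proof of Lemma \ref{lem:subdiagrams}. Summing over $i=2,\ldots,n$ yields $\sum_{i=1}^{n-1}m_{i,i+1}=|\lambda(a)|-|\mu|$, so
$$F(a_2,\ldots,a_n)\big|_{t=1}=\sum_{\mu\subseteq\lambda(a)}q^{|\lambda(a)|-|\mu|}.$$
No genuine obstacle arises here: once the vanishing $A(m)|_{t=1}=0$ for $m\geqslant 1$ is noticed, the remainder is bookkeeping through the Tesler constraints, and the most delicate point is merely keeping the index shift between the rows of $M$ and the parts of $\mu$ consistent.
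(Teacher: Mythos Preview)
Your proof is correct and follows essentially the same approach as the paper: specialize $A(m)$ and $B(m)$ at $t=1$ to see that only two-diagonal Tesler matrices survive, then invoke Lemma~\ref{lem:subdiagrams} and compute the surviving weight $\prod_i q^{m_{i,i+1}}=q^{|\lambda(a)|-|\mu|}$. You have simply made the exponent identification $\sum_i m_{i,i+1}=|\lambda(a)|-|\mu|$ more explicit than the paper does.
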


\begin{proof}
It is clear that at $t=1$ the coefficients $A(m)$ and $B(m)$ specialize as follows:
\[
	A(m)\Big|_{t=1}=0\ \text{for}\ m>0, \quad A(0)\Big|_{t=1}=1, \quad  B(m)\Big|_{t=1}=q^{m}.
\]
Therefore at $t=1$ the sum \eqref{eq:tesler big sum} specializes to the sum over two-diagonal Tesler matrices which by 
Lemma~\ref{lem:subdiagrams} correspond to subdiagrams $\mu\subseteq \lambda(a)$. The weight of such a 
two-diagonal Tesler matrix specializes to $\prod_i q^{m_{i,i+1}}=q^{|\lambda(a)|-|\mu|}$.
\end{proof}

\begin{corollary}
\label{cor:set 0}
For $a_i\geqslant 0$ and $a_n=0$ we have
\[
	F(a_2,\ldots,a_{n-1},0)=F(a_2,\ldots,a_{n-1}).
\]
\end{corollary}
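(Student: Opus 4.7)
The plan is to apply the Tesler matrix formula \eqref{eq:tesler big sum} directly. Setting $a_n=0$ in the constraints \eqref{eq: tesler}, the $i=n$ equation becomes
\[
    m_{nn}+\sum_{j<n}m_{jn}=0.
\]
Since every entry $m_{ij}$ is a nonnegative integer, this forces the entire $n$-th column of $M$ to vanish: $m_{jn}=0$ for all $j\leqslant n$.

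With the $n$-th column zeroed out, the remaining constraints \eqref{eq: tesler} for $1\leqslant i\leqslant n-1$ reduce (by dropping the term $m_{in}$ from the sum $\sum_{j>i}m_{ij}$) to exactly the Tesler constraints for the sequence $(a_1,\ldots,a_{n-1})$ on the upper-left $(n-1)\times(n-1)$ block. This yields a weight-preserving bijection between $n\times n$ Tesler matrices with row sums $(a_1,\ldots,a_{n-1},0)$ and $(n-1)\times(n-1)$ Tesler matrices with row sums $(a_1,\ldots,a_{n-1})$.

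Finally, I would verify that the bijection is weight-preserving by observing from the generating function definitions that $A(0)=B(0)=1$ (both series evaluate to $1$ at $z=0$). Hence the factor $B(m_{n-1,n})$ and the factors $A(m_{i,n})$ for $i<n-1$ that appear in the weight of the larger matrix are all equal to $1$, so the products in \eqref{eq:tesler big sum} coincide term-by-term on the two sides. Summing over the bijection and applying \eqref{eq:tesler big sum} twice gives $F(a_2,\ldots,a_{n-1},0)=F(a_2,\ldots,a_{n-1})$. There is no real obstacle here; the result is an immediate consequence of the Tesler-matrix reformulation combined with the vanishing boundary values $A(0)=B(0)=1$.
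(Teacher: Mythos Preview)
Your proof is correct and follows essentially the same approach as the paper: use the $i=n$ Tesler equation with $a_n=0$ to force the last column to vanish, observe that the remaining equations are the Tesler constraints for $(a_1,\ldots,a_{n-1})$, and conclude that the weights match because $A(0)=B(0)=1$.
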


\begin{proof}
The last equation in \eqref{eq: tesler} reads as
\[
	m_{nn}+\sum_{j<n}m_{jn}=a_{n}.
\]
Hence if $a_n=0$, we obtain $m_{jn}=0$ for all $j$. Therefore a Tesler matrix with parameters $(a_1,a_2,\ldots,a_{n-1},0)$ is 
just an $(n-1)\times (n-1)$ Tesler matrix with row parameters $(a_1,a_2,\ldots,a_{n-1})$ completed with a column of 
zeroes. Since $A(0)=B(0)=1$, the weight of a Tesler matrix in \eqref{eq:tesler big sum} does not change after 
adding this column.
\end{proof}

\subsection{Separating the sum}

It is useful to separate the sum \eqref{eq: def F} into two pieces. Clearly, for any tableau $T$ with at least two boxes
either $z_2=q$ or $z_2=t$. Let us call a standard tableau $T$ \defn{head-like} if $z_2=q$. Given such a tableau, we define 
reduced weight $\twt(T)=(1-t/q)\wt(T)$ and 
$$
	H(a_2,\ldots,a_n;q,t)=\sum_{z_2(T)=q}z_2^{a_2}\cdots z_n^{a_n}\cdot \twt(T).
$$
Similarly to the proof of Proposition \ref{prop: F polynomial} one can prove that $H(a_2,\ldots,a_n)$ is a polynomial 
in $q$ and $t$ with integer coefficients.

\begin{remark}
\label{rem: a_2 monomial in H}
The polynomial $H(a_2,\ldots,a_n)$ depends on $a_2$ only by an overall factor of $q^{a_2}$:
$$
	H(a_2,\ldots,a_n;q,t)=q^{a_2}\left(\sum_{z_2(T)=q}z_3^{a_3}\cdots z_n^{a_n}\cdot \twt(T)\right).
$$
\end{remark}

\begin{remark}
In the geometric setup of Section \ref{subsec.fhilb}
the series $H(a_2,\ldots,a_n)$ computes the equivariant character of the pushforward 
$\pi_*(\CL_2^{a_2}\cdots \cdots \CL_n^{a_n})$ at one of the fixed points on $\FHilb^2(\CC^2,0)=\mathbb{P}^1$. 
Here $\pi \colon \FHilb^n(\CC^2,0)\to \FHilb^2(\CC^2,0)$ is the natural projection.
\end{remark}

The following  is clear from the definition:
\begin{equation}
\label{H to F}
	F(a_2,\ldots,a_n)=\frac{1}{1-t/q}H(a_2,\ldots,a_n;q,t)+\frac{1}{1-q/t}H(a_2,\ldots,a_n;t,q).
\end{equation}
Therefore any linear relation on $H(a)$ implies a linear relation for $F(a)$.

\begin{lemma}
\label{lem:positivity from H}
Assume that $H(a_2,\ldots,a_n)$ is a polynomial in $q$ and $t$ with nonnegative coefficients, where all monomials $q^it^j$
satisfy $i\geqslant j$. Then $F(a_2,\ldots,a_n)$ is a polynomial in $q$ and $t$ with nonnegative coefficients.
\end{lemma}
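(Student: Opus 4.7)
My plan is to exploit the explicit symmetrization formula \eqref{H to F} and the constraint $i\geqslant j$ on the support of $H$. First I would combine the two fractions in \eqref{H to F} over the common denominator $q-t$, rewriting
\[
    F(a_2,\ldots,a_n) = \frac{q\,H(a_2,\ldots,a_n;q,t) - t\,H(a_2,\ldots,a_n;t,q)}{q-t}.
\]
This reduces the lemma to showing that the numerator is divisible by $q-t$ with a quotient having nonnegative coefficients.

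Next I would expand $H(a_2,\ldots,a_n;q,t) = \sum_{i\geqslant j\geqslant 0} c_{ij}\,q^i t^j$ with $c_{ij}\geqslant 0$ (using the hypothesis that all monomials satisfy $i\geqslant j$) and substitute term by term. Each term contributes
\[
    c_{ij}\,\frac{q^{i+1}t^j - q^{j}t^{i+1}}{q-t} = c_{ij}\,q^j t^j \cdot \frac{q^{i-j+1}-t^{i-j+1}}{q-t}
    = c_{ij}\,q^j t^j\,[\,i-j+1\,]_{q,t}.
\]
Since $i\geqslant j$, the factor $[i-j+1]_{q,t} = q^{i-j}+q^{i-j-1}t+\cdots+t^{i-j}$ is a polynomial in $q,t$ with nonnegative integer coefficients, and $q^j t^j$ is a monomial with coefficient $1$. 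Therefore each summand has nonnegative coefficients, and so does their sum $F(a_2,\ldots,a_n)$.

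There is essentially no obstacle here beyond recognizing the algebraic identity: the role of the hypothesis $i\geqslant j$ is precisely to guarantee that the division by $q-t$ produces the polynomial $[i-j+1]_{q,t}$ rather than a Laurent polynomial or a true rational function. The fact that $F(a_2,\ldots,a_n)$ is already known to be a polynomial (by Proposition~\ref{prop: F polynomial}) is consistent with, but not needed for, this argument, since divisibility of the numerator by $q-t$ is verified termwise. This also gives the slightly stronger statement that $F$ is a nonnegative combination of products $q^j t^j [i-j+1]_{q,t}$, i.e., a positive sum of the "symmetric chain" building blocks $[k]_{q,t}$ that appear throughout the paper (compare Example~\ref{example.2 and 3}).
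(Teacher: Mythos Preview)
Your proof is correct and takes essentially the same approach as the paper: both arguments use linearity to reduce to a single monomial $q^it^j$ with $i\geqslant j$, then verify the identity $\frac{q^it^j}{1-t/q}+\frac{q^jt^i}{1-q/t}=q^jt^j[i-j+1]_{q,t}$. Your preliminary step of clearing the denominator to rewrite $F=(q\,H(q,t)-t\,H(t,q))/(q-t)$ is a cosmetic variation, not a substantive difference.
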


\begin{proof}
By linearity of~\eqref{H to F} it suffices to prove the statement for a single monomial $q^it^j$ with $i\geqslant j$. In this case
\begin{multline*}
	\frac{q^it^j}{1-t/q}+\frac{q^jt^i}{1-q/t}=q^jt^j\frac{q^{i-j+1}-t^{i-j+1}}{q-t}=q^jt^j(q^{i-j}+\cdots+t^{i-j})\\
	=q^it^j+q^{i-1}t^{j+1}+\cdots+q^{j+1}t^{i-1}+q^{j}t^{i}.
\end{multline*}
\end{proof}

\begin{corollary}
Assume that the polynomial $H(a_2,a_3,\ldots,a_n)$ has nonnegative coefficients. Then for all sufficiently large $N$ the polynomial $F(N,a_3,\ldots,a_n)$ has nonnegative coefficients.
\end{corollary}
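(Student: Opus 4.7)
The plan is to combine Remark \ref{rem: a_2 monomial in H}, which extracts the $a_2$-dependence of $H$ as an overall factor $q^{a_2}$, with Lemma \ref{lem:positivity from H}, whose hypothesis (nonnegativity plus $q$-exponent dominating $t$-exponent in every monomial) can be forced by taking $a_2 = N$ large.

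More precisely, by Remark \ref{rem: a_2 monomial in H} there is a polynomial $G(a_3,\ldots,a_n;q,t)$, independent of $a_2$, such that
\[
   H(a_2,a_3,\ldots,a_n;q,t) = q^{a_2}\,G(a_3,\ldots,a_n;q,t).
\]
By hypothesis $H(a_2,a_3,\ldots,a_n)$ has nonnegative coefficients, so after dividing out the monomial $q^{a_2}$ we conclude that $G(a_3,\ldots,a_n;q,t)$ itself is a polynomial in $q,t$ with nonnegative integer coefficients; in particular the same is true of $H(N,a_3,\ldots,a_n;q,t) = q^N\,G(a_3,\ldots,a_n;q,t)$ for every $N \geqslant 0$. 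Let $d$ be the maximum $t$-degree of a monomial appearing in $G$. Then every monomial $q^i t^j$ of $q^N G$ has $i \geqslant N$ and $j \leqslant d$, so for all $N \geqslant d$ we have $i \geqslant j$ for each such monomial.

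Thus, for $N \geqslant d$, the polynomial $H(N,a_3,\ldots,a_n;q,t)$ satisfies the hypothesis of Lemma \ref{lem:positivity from H}, and that lemma then yields that $F(N,a_3,\ldots,a_n)$ has nonnegative coefficients. No serious obstacle arises: the only subtle point is to identify an explicit $N_0$, which the argument above provides as $N_0 = \deg_t G(a_3,\ldots,a_n;q,t)$.
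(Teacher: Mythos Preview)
Your proof is correct and takes essentially the same approach as the paper: use Remark~\ref{rem: a_2 monomial in H} to write $H(N,a_3,\ldots,a_n)=q^{N-a_2}H(a_2,\ldots,a_n)$ (equivalently $q^N G$), then observe that a large enough power of $q$ forces every monomial to have $q$-exponent at least its $t$-exponent, and apply Lemma~\ref{lem:positivity from H}. One tiny remark: the sentence ``after dividing out the monomial $q^{a_2}$ we conclude that $G$ is a polynomial'' is not quite a justification on its own---dividing a polynomial by $q^{a_2}$ could a priori produce negative $q$-exponents---but it is immediate from Remark~\ref{rem: a_2 monomial in H} since $G=H(0,a_3,\ldots,a_n)$, which is a polynomial; alternatively, the argument goes through verbatim for Laurent $G$ with a slightly larger $N_0$.
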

\begin{proof}
Indeed, by Remark \ref{rem: a_2 monomial in H} we have
$$
H(N,a_3,\ldots,a_n)=q^{N-a_2}H(a_2,\ldots,a_n)
$$
and for sufficiently large $N$ all terms in it satisfy the condition in Lemma \ref{lem:positivity from H}.
\end{proof}

As we will see below, writing the formulas for $H(a;q,t)$ is much more efficient than the ones for $F(a;q,t)$, and the sums 
contain half as many terms.

\begin{example}
\label{example.n=2}
Consider the case $n=2$. There is only one tableau with $z_2(T)=q$, and $z(T)=(1,q)$. A direct computation shows 
that $\wt(1,q)=\frac{1}{1-t/q}$, so $\twt(1,q)=1$. Therefore $H(a)=q^{a}$. By the proof of 
Lemma~\ref{lem:positivity from H}, this confirms Example~\ref{example.2 and 3}.
\end{example}

\begin{example}
\label{example.n=3}
Consider the case $n=3$. There are two tableaux with $z_2=q$ and
$$
	\wt(1,q,q^2)=\frac{1}{(1-t/q)(1-t/q^2)}, \qquad \wt(1,q,t)=\frac{1}{(1-t/q)(1-q^2/t)}
$$
while 
$$
	\twt(1,q,q^2)=\frac{1}{(1-t/q^2)}, \qquad \twt(1,q,t)=\frac{1}{(1-q^2/t)}.
$$
We obtain
\begin{equation}
\label{H for n=3}
	H(a,b)=\frac{q^{a+2b}}{(1-t/q^2)}+\frac{q^{a}t^{b}}{(1-q^2/t)}=q^a(q^{2b}+q^{2b-2}t+\cdots+t^b)
	=q^a \sum_{i=0}^b (q^2)^{b-i}t^i.
\end{equation}
Note that~\eqref{H for n=3} holds for any integer $a$ and $b\geqslant -1$. Furthermore, $H(a,-1)=0$ for all integers $a$.
For $a\geqslant b\geqslant 0$ the conditions of Lemma~\ref{lem:positivity from H} are satisfied, and $F(a,b)$ has 
nonnegative coefficients. Using~\eqref{H to F}, one can confirm the explicit expression in Example~\ref{example.2 and 3}
(see also Lemma~\ref{F(a,b)}).
\end{example}

\subsection{Recursion for $n=4$}
\label{subsec:recursion n=4}

The situation for $n=4$ is more interesting. We record here the reduced weights $\twt(T)$ for all five head-like tableaux:
\begin{align*}
	\twt(1,q,q^2,q^3)& =\frac{1}{(1-t/q^2)(1-t/q^3)}, & \twt(1,q,q^2,t)&=\frac{1}{(1-t/q^2)(1-q^3/t)},\\
	\twt(1,q,t,q^2)&=\frac{(1-t)}{(1-t^2/q^2)(1-q^2/t)(1-t/q)}, & \twt(1,q,t,t^2)&=\frac{1}{(1-q^2/t^2)(1-q/t)},\\
	\twt(1,q,t,qt)& =\frac{1-q}{(1-q^2/t)(1-q/t)(1-t/q)}. &&
\end{align*}

\begin{lemma}
The polynomials $H(a,b,c)$ satisfy the following recursion
$$
	H(a,b,c)=H(a+1,b+1,c-1)+(qt)^cH(a+c,b-c)+\sum_{i=0}^{c-1}(qt)^{b+2c-2i}H(a-b-2c+4i).
$$
\end{lemma}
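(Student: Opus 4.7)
The plan is to prove the recursion by computing the difference $H(a,b,c)-H(a+1,b+1,c-1)$ directly and matching it against the remaining two right-hand side terms. The key observation is that under the shift $(a,b,c)\mapsto(a+1,b+1,c-1)$ the monomial $z_2^a z_3^b z_4^c$ is multiplied by $q\,z_3/z_4$, which equals $1$ for both $T=(1,q,q^2,q^3)$ and $T=(1,q,t,qt)$; hence these two tableaux contribute zero to the difference. For the other three head-like tableaux the factor equals $q^3/t$, $t/q$, and $q/t$, and in each case the factor minus $1$ cancels exactly one denominator in the corresponding $\twt$. A short computation yields
\[
H(a,b,c)-H(a+1,b+1,c-1) = \frac{q^{a+2b}t^c}{1-t/q^2} + \frac{q^{a+2c}t^b(1-t)}{(1-t^2/q^2)(1-q^2/t)} + \frac{q^a t^{b+2c}}{1-q^2/t^2}.
\]

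Next, using $H(m)=q^m$ from Example~\ref{example.n=2} and the closed form \eqref{H for n=3}, I would rewrite the remaining right-hand side terms as explicit monomial sums:
\[
(qt)^c H(a+c,b-c) = \sum_{j=0}^{b-c}q^{a+2b-2j}t^{c+j}, \qquad \sum_{i=0}^{c-1}(qt)^{b+2c-2i}H(a-b-2c+4i) = \sum_{i=0}^{c-1}q^{a+2i}t^{b+2c-2i}.
\]
The first sum is exactly the truncation of $q^{a+2b}t^c/(1-t/q^2)=\sum_{j\geqslant 0}q^{a+2b-2j}t^{c+j}$ at $j=b-c$, leaving the remainder $q^{a+2c-2}t^{b+1}/(1-t/q^2)$ to be combined with the other two rational functions.

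After this peeling, the recursion reduces to
\[
\frac{q^{a+2c-2}t^{b+1}}{1-t/q^2} + \frac{q^{a+2c}t^b(1-t)}{(1-t^2/q^2)(1-q^2/t)} + \frac{q^a t^{b+2c}}{1-q^2/t^2} = \sum_{i=0}^{c-1}q^{a+2i}t^{b+2c-2i}.
\]
Applying $1/(1-q^2/t^2)=-(t^2/q^2)/(1-t^2/q^2)$ places all three terms over the common denominator $(1-t/q^2)(1-t^2/q^2)(1-q^2/t)$. The hard part is the algebraic consolidation of the resulting numerator: the crucial step is the identity
\[
(t/q^2)(1-t^2/q^2)(1-q^2/t)+(1-t)(1-t/q^2)=-t(1-t/q^2)^2,
\]
which cancels one factor of $(1-t/q^2)$ coming from the first two terms. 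Invoking the factorization $q^{2c}-t^{2c}=(q^2-t^2)\sum_{k=0}^{c-1}q^{2c-2-2k}t^{2k}$, one then rewrites the combined numerator as $(q^2-t)\,t^{b+1}(q^2-t^2)\sum_{k=0}^{c-1}q^{2c-2-2k}t^{2k}$; after the remaining denominator factors cancel and one re-indexes via $i=c-1-k$, this is exactly the claimed monomial sum. The main technical obstacle is sign bookkeeping in the denominators $1-q^2/t$ and $1-q^2/t^2$, since reorienting them to share $(1-t^2/q^2)$ introduces several compensating sign flips that must align for the final cancellation.
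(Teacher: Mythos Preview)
Your first step---observing that the shift $(a,b,c)\mapsto(a+1,b+1,c-1)$ multiplies each monomial by $qz_3/z_4$, so that $(1,q,q^2,q^3)$ and $(1,q,t,qt)$ drop out and the three-term rational expression remains---is exactly what the paper does, and your formula for the difference matches the paper's.

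Where you diverge is in handling $(qt)^c H(a+c,b-c)$. You expand it as the finite monomial sum $\sum_{j=0}^{b-c}q^{a+2b-2j}t^{c+j}$ and peel it off as a truncation of the geometric series for $q^{a+2b}t^c/(1-t/q^2)$. This is valid (at least for $b\geqslant c-1$, so that the finite sum makes sense), but it forces you into the numerator identity and sign-tracking you describe; and note that your stated ``combined numerator'' $(q^2-t)\,t^{b+1}(q^2-t^2)\sum_k\cdots$ is missing a factor of roughly $q^{a-2}$ and a sign, so that paragraph would need tightening.

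The paper's route is shorter: it uses the \emph{rational} two-term form of \eqref{H for n=3},
\[
(qt)^c H(a+c,b-c)=\frac{q^{a+2b}t^c}{1-t/q^2}+\frac{q^{a+2c}t^b}{1-q^2/t},
\]
valid for all integers. The first term cancels the first summand of the difference outright, and the second combines with the middle summand in one step to give $-q^{a+2c}t^b/(1-q^2/t^2)$. What remains is
\[
\frac{q^a t^{b+2c}-q^{a+2c}t^b}{1-q^2/t^2},
\]
which is immediately recognized as the geometric sum $\sum_{i=0}^{c-1}(qt)^{b+2c-2i}q^{a-b-2c+4i}$, with no common-denominator bookkeeping required. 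Your approach reaches the same conclusion, but the paper's avoids essentially all of the algebra in your final paragraph.
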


\begin{proof}
Let us compute the contribution of all  tableaux to $H(a,b,c)-H(a+1,b+1,c-1)$. Let $\ell(T;a,b,c)=z_2^az_3^bz_4^c.$
Then
\begin{equation*}
\begin{split}
	\ell(1,q,q^2,q^3;a,b,c)&=q^{a+2b+3c}=\ell(1,q,q^2,q^3;a+1,b+1,c-1),\\
	\ell(1,q,q^2,t;a,b,c)&=q^{a+2b}t^{c}, \qquad \ell(1,q,q^2,t;a+1,b+1,c-1)=q^{a+2b+3}t^{c-1},\\
	\ell(1,q,t,q^2;a,b,c)&=q^{a+2c}t^{b}, \qquad \ell(1,q,t,q^2;a+1,b+1,c-1)=q^{a+2c-1}t^{b+1},\\
	\ell(1,q,t,t^2;a,b,c)&=q^{a}t^{b+2c}, \qquad \ell(1,q,t,t^2;a+1,b+1,c-1)=q^{a+1}t^{b+2c-1},\\
	\ell(1,q,t,qt;a,b,c)&=q^{a+c}t^{b+c}=\ell(1,q,t,qt;a+1,b+1,c-1).
\end{split}
\end{equation*}

Therefore the contributions of $(1,q,q^2,q^3)$ and $(1,q,t,qt)$ cancel, and
\begin{multline*}
	H(a,b,c)-H(a+1,b+1,c-1)\\
	=q^{a+2b}t^{c}(1-q^3/t)\twt(1,q,q^2,t)+q^{a+2c}t^{b}(1-t/q)\twt(1,q,t,q^2)+q^{a}t^{b+2c}(1-q/t)\twt(1,q,t,t^2)\\
	=\frac{q^{a+2b}t^{c}}{(1-t/q^2)}+\frac{q^{a+2c}t^{b}(1-t)}{(1-t^2/q^2)(1-q^2/t)}+\frac{q^{a}t^{b+2c}}{(1-q^2/t^2)}.
\end{multline*}
On the other hand, by \eqref{H for n=3} we obtain
$$
	(qt)^cH(a+c,b-c)=\frac{q^{a+2b}t^c}{(1-t/q^2)}+\frac{q^{a+2c}t^{b}}{(1-q^2/t)},
$$
so 
$$
	H(a,b,c)-H(a+1,b+1,c-1)-(qt)^cH(a+c,b-c)=-\frac{q^{a+2c}t^{b}}{(1-q^2/t^2)}+\frac{q^{a}t^{b+2c}}{(1-q^2/t^2)}.
$$
Comparing this with the last term in the recurrence, we find
\begin{multline*}
	\sum_{i=0}^{c-1}(qt)^{b+2c-2i}H(a-b-2c+4i)=\sum_{i=0}^{c-1}(qt)^{b+2c-2i}\cdot q^{a-b-2c+4i}\\
	=\sum_{i=0}^{c-1}q^{a+2i}t^{b+2c-2i}=q^{a}t^{b+2c}\frac{1-q^{2c}t^{-2c}}{1-q^2/t^2}
	=\frac{q^{a}t^{b+c}-q^{a+2c}t^b}{1-q^2/t^2}.
\end{multline*}
\end{proof}

\begin{corollary}
\label{corollary.recursion}
The polynomials $F(a,b,c)$ satisfy the recursion relation
$$
	F(a,b,c)=F(a+1,b+1,c-1)+(qt)^cF(a+c,b-c)+\sum_{i=0}^{c-1}(qt)^{b+2c-2i}F(a-b-2c+4i).
$$
\end{corollary}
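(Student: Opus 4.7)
The plan is to derive the $F$-recursion from the $H$-recursion by invoking the linear relation \eqref{H to F}, which expresses any $F(a)$ as an explicit $\mathbb{Q}(q,t)$-linear combination of $H(a;q,t)$ and $H(a;t,q)$. As remarked in the paragraph following \eqref{H to F}, every linear relation among the $H$-values yields a corresponding linear relation among the $F$-values, and the proposed $F$-recursion has precisely the same shape as the $H$-recursion just proved, so it is natural to hope that the desired identity is a direct transfer.

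The key observation to be made is that the coefficients appearing in the $H$-recursion, namely $1$, $(qt)^{c}$, and $(qt)^{b+2c-2i}$, are all symmetric under the involution $q\leftrightarrow t$. Therefore, swapping $q$ and $t$ in the $H$-recursion of the preceding lemma produces the companion identity
\[
H(a,b,c;t,q)=H(a+1,b+1,c-1;t,q)+(qt)^{c}H(a+c,b-c;t,q)+\sum_{i=0}^{c-1}(qt)^{b+2c-2i}H(a-b-2c+4i;t,q),
\]
with identical coefficient structure.

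To finish, I would multiply the original $H$-recursion by $\tfrac{1}{1-t/q}$, multiply the $q\leftrightarrow t$ swapped version by $\tfrac{1}{1-q/t}$, and add the two. Applying \eqref{H to F} term by term then converts each $H$-expression into the corresponding $F$-expression, producing exactly the claimed recursion for $F(a,b,c)$. The minor point to verify along the way is that \eqref{H to F} is available for $F$ with two arguments (matching $F(a+c,b-c)$ on the right-hand side) and for $F$ with one argument (matching $F(a-b-2c+4i)$), but this is exactly the content of \eqref{H to F} at $n=3$ and $n=2$ respectively.

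There is no real obstacle here: once the $q\leftrightarrow t$ symmetry of the coefficients is noted, the corollary is a formal consequence of linearity. The only mild caveat is that the recursion reduces a three-variable $F$ to itself, to a two-variable $F$, and to one-variable $F$'s, so one should be explicit that \eqref{H to F} applies uniformly across these values of $n$; once that is observed, the proof is essentially a single line.
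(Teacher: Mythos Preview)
Your proposal is correct and matches the paper's approach: the corollary is stated without its own proof precisely because, as noted right after \eqref{H to F}, any linear relation among the $H$'s transfers to the $F$'s, and your observation that the coefficients $1$, $(qt)^c$, $(qt)^{b+2c-2i}$ are $q\leftrightarrow t$ symmetric is exactly what makes this transfer automatic. The caveat you flag about \eqref{H to F} applying across $n=2,3,4$ is handled by the fact that \eqref{H to F} is stated for arbitrary $n$.
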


Note that the entries $a-b-2c+4i$ in the recurrence of Corollary~\ref{corollary.recursion} can become negative.
However, the following symmetry relation holds.

\begin{proposition}
\label{proposition.negative}
We have for $a>0$
\[
	F(-a) = - \frac{1}{(qt)^{a-1}} \; F(a-2).
\]
\end{proposition}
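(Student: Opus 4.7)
The plan is to reduce everything to a direct computation using the explicit closed form for $F$ in the $n=2$ case, since Proposition \ref{proposition.negative} only involves $F$ evaluated on a one-entry sequence. The crucial observation is that the tableau-sum definition \eqref{eq: def F} makes perfect sense for arbitrary integer exponents, not merely nonnegative ones: the only thing the integer $a_2$ does is appear as a formal exponent $z_2^{a_2}$, which is a well-defined Laurent monomial in $q$ and $t$ for each of the two standard tableaux of size two. Thus $F(a_2)$ is defined as a rational function (in fact a Laurent polynomial) in $q,t$ for every integer $a_2$, even when $a_2 < 0$.

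First I would record, using the weight computations from Example \ref{example.n=2} (namely $\wt(1,q) = 1/(1-t/q)$ and, by the analogous calculation, $\wt(1,t) = 1/(1-q/t)$), that for every integer $a_2$
\[
F(a_2) \;=\; \frac{q^{a_2}}{1-t/q} + \frac{t^{a_2}}{1-q/t} \;=\; \frac{q^{a_2+1}-t^{a_2+1}}{q-t}.
\]
This is simply the two-variable quantum integer $[a_2+1]_{q,t}$ extended to all integers, and for $a_2 \geqslant 0$ it agrees with the polynomial stated in Example \ref{example.2 and 3}. Specializing this formula gives $F(-a)=(q^{1-a}-t^{1-a})/(q-t)$ and $F(a-2)=(q^{a-1}-t^{a-1})/(q-t)$.

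The identity then follows by direct manipulation: dividing $F(a-2)$ by $(qt)^{a-1}$ converts the numerator monomials $q^{a-1}$ and $t^{a-1}$ into $t^{1-a}$ and $q^{1-a}$ respectively, so
\[
\frac{1}{(qt)^{a-1}}\,F(a-2) \;=\; \frac{t^{1-a}-q^{1-a}}{q-t} \;=\; -F(-a),
\]
which is the claimed relation. I do not anticipate any genuine obstacle here: once one grants that the rational-function formula for $F(a_2)$ holds for all integers $a_2$, the statement is precisely the elementary symmetry $[-m]_{q,t} = -(qt)^{-m}\,[m]_{q,t}$ of the two-variable $q,t$-integer. The only minor point worth flagging is the boundary case $a = 1$, where both sides vanish, giving the consistency $F(-1)=0$.
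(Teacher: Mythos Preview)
Your proof is correct and follows essentially the same approach as the paper: both arguments start from the explicit closed form $F(a_2)=\dfrac{q^{a_2+1}-t^{a_2+1}}{q-t}$ (equivalently written in the paper as $q^{a_2}\dfrac{1-(t/q)^{a_2+1}}{1-t/q}$), valid for all integers $a_2$, and verify the identity by direct algebraic manipulation. The only cosmetic difference is that the paper factors out $q^{-a}$ and manipulates the ratio $(t/q)^{a-1}$, while you divide through by $(qt)^{a-1}$ directly; the content is identical.
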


\begin{proof}
By~\eqref{H to F} and Example~\ref{example.n=2}, we have
\[
	F(a) = \frac{1}{1-t/q} q^{a} + \frac{1}{1-q/t} t^{a} = q^a \frac{1-(t/q)^{a+1}}{1-t/q}.
\]
Hence
\begin{equation*}
	F(-a) = q^{-a} \frac{1-(t/q)^{-a+1}}{1-t/q} = -q^{-a}(t/q)^{-a+1} \frac{1-(t/q)^{a-1}}{1-t/q}
	= - \frac{1}{(qt)^{a-1}} F(a-2).
\end{equation*}
\end{proof}

Note that using Corollary~\ref{corollary.recursion} and Proposition~\ref{proposition.negative}, $F(a,b,c)$ for 
$a\geqslant b \geqslant c \geqslant 0$ can be reduced to $F(a,b)$ for $a\geqslant b \geqslant 0$ and
$F(a)$ for $a\geqslant 0$, which are given in Example~\ref{example.2 and 3}.

Now we compute $F(a,b)$ explicitly by separating the sum.
\begin{lemma}\label{F(a,b)}
For $b \geqslant -1$ and $a \geqslant b-1$ we have 
\begin{equation*} 
	F(a,b)= \sum_{i=0}^b \sum_{j=i}^{a+2b-2i}q^{j}t^{(a+2b-i)-j}.
 \end{equation*}
\end{lemma}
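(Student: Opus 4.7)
The plan is to derive this formula directly from the expression for $H(a,b)$ already established in Example \ref{example.n=3} and the relation \eqref{H to F} converting $H$ to $F$, combining terms pairwise using the identity from the proof of Lemma \ref{lem:positivity from H}.

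First, I would recall from \eqref{H for n=3} that
\[
    H(a,b;q,t) = q^a \sum_{i=0}^{b} (q^2)^{b-i} t^i = \sum_{i=0}^{b} q^{a+2b-2i}\,t^{i}.
\]
Then \eqref{H to F} gives
\[
    F(a,b) = \sum_{i=0}^{b} \left( \frac{q^{a+2b-2i}t^{i}}{1-t/q} + \frac{q^{i}t^{a+2b-2i}}{1-q/t} \right).
\]
Setting $I_i = a+2b-2i$ and $J_i = i$, the key tool is the identity (established inside the proof of Lemma \ref{lem:positivity from H})
\[
    \frac{q^{I}t^{J}}{1-t/q} + \frac{q^{J}t^{I}}{1-q/t} = q^{J}t^{J}\,\frac{q^{I-J+1}-t^{I-J+1}}{q-t} = \sum_{j=J}^{I} q^{j}\,t^{I+J-j},
\]
valid whenever $I \geq J$ (and interpreted as $0$ when $I = J-1$, since then $q^{I-J+1} - t^{I-J+1} = 0$).

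Next I would verify that the hypothesis $I_i \geq J_i - 1$ is satisfied throughout the range $0 \leq i \leq b$: this inequality reads $a + 2b - 3i \geq -1$, i.e., $3i \leq a + 2b + 1$, which follows from $i \leq b$ together with $a \geq b-1$. Applying the identity summand by summand with $I_i + J_i = a + 2b - i$ yields
\[
    F(a,b) = \sum_{i=0}^{b}\sum_{j=i}^{a+2b-2i} q^{j}\,t^{(a+2b-i)-j},
\]
with the convention that the inner sum is empty (i.e., equal to $0$) precisely in the single boundary case $a = b-1$, $i = b$, where $I_i = J_i - 1$.

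The only mild obstacle is bookkeeping this boundary case cleanly: one must ensure that treating an empty inner sum as $0$ on the combinatorial side matches the identity $q^{b-1}t^b/(1-t/q) + q^{b}t^{b-1}/(1-q/t) = 0$ on the algebraic side. Once that is observed, the lemma follows at once. The range restrictions $b \geq -1$ and $a \geq b-1$ are exactly what is needed for both \eqref{H for n=3} to hold and for the pairwise combination identity to apply with nonnegative inner sum length (or the empty boundary case), so no additional cases arise.
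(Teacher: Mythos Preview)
Your proposal is correct and follows essentially the same approach as the paper's own proof: both start from the expression \eqref{H for n=3} for $H(a,b)$, apply \eqref{H to F}, and then combine the matched pairs of monomials via the identity $(q^{N+1}-t^{N+1})/(q-t)=\sum_{j=0}^{N}q^{j}t^{N-j}$, with the condition $a\geqslant b-1$ ensuring each pair has the required exponent inequality. Your explicit treatment of the boundary case $a=b-1$, $i=b$ (empty inner sum matching a vanishing algebraic term) is a nice clarification but not a different method.
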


\begin{proof}
We may express $F(a,b)$ in terms of $H(a,b)$ by separating the sum as above.  Using the expression for $H(a,b)$ 
from Example~\ref{example.n=3} (note that this expression is valid for $b \geqslant -1$ and any value of $a$) this gives us
\begin{equation*}
\begin{split}
	F(a,b) &= \frac{1}{1-t/q}q^a\sum_{i=0}^b (q^2)^{b-i}t^i+\frac{1}{1-q/t}t^a\sum_{i=0}^b (t^2)^{b-i}q^i \\
	&=\frac{1}{q-t}\bigg(q^{a+1}\sum_{i=0}^b (q^2)^{b-i}t^i-t^{a+1}\sum_{i=0}^b (t^2)^{b-i}q^i\bigg)\\
	&=\sum_{i=0}^b \frac{q^{a+2b+1-2i}t^i-t^{a+2b+1-2i} q^i}{q-t}=\sum_{i=0}^b \sum_{j=i}^{a+2b-2i}q^{j}t^{(a+2b-i)-j},
\end{split}
\end{equation*}
where the last step is legal because we are assuming $a \geqslant b-1$.
\end{proof}

\begin{lemma}
\label{lemma.-1}
We have $F(-1)=F(a,-1)=0$ for $a\geqslant -2$ and $F(a,b,-1)=0$ for $a,b\geqslant 1$.
\end{lemma}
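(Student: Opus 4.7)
The plan is to handle the three identities separately, from easiest to hardest.

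For $F(-1)=0$, I would simply invoke Example~\ref{example.2 and 3}: the $n=2$ formula gives $F(a_2)=[a_2+1]_{q,t}$, and plugging in $a_2=-1$ yields $[0]_{q,t}=0$.

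For $F(a,-1)=0$ with $a\geqslant -2$, I would apply Lemma~\ref{F(a,b)}, which is explicitly stated to hold whenever $b\geqslant -1$ and $a\geqslant b-1$. Setting $b=-1$ puts us in this regime (since $a\geqslant -2$), and the outer index set $\{0\leqslant i\leqslant -1\}$ is empty, so the double sum is empty and we get $F(a,-1)=0$ immediately.

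For $F(a,b,-1)=0$ with $a,b\geqslant 1$, I would combine the recursion of Corollary~\ref{corollary.recursion} with the $a_n=0$ identity of Corollary~\ref{cor:set 0}. Specializing the recursion at $c=0$ gives
\[
F(a,b,0)=F(a+1,b+1,-1)+(qt)^{0}F(a,b)+\sum_{i=0}^{-1}(qt)^{b-2i}F(a-b-2+4i),
\]
and the last sum is empty. On the other hand, Corollary~\ref{cor:set 0} tells us that $F(a,b,0)=F(a,b)$ whenever $a,b\geqslant 0$. Equating the two expressions forces $F(a+1,b+1,-1)=0$ for all $a,b\geqslant 0$, which after reindexing is exactly the claim $F(a,b,-1)=0$ for $a,b\geqslant 1$.

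The only point requiring a touch of care is the legitimacy of setting $c=0$ in Corollary~\ref{corollary.recursion}, since the recursion was discussed in contexts where some parameters were positive. But the recursion is deduced from the analogous recursion for $H(a,b,c)$, which is proved by purely algebraic manipulation of rational functions in $q,t$ and therefore valid for arbitrary integers $a,b,c$; in particular the identity persists at $c=0$. Beyond this observation, there is no real obstacle.
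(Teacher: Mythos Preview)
Your proposal is correct and follows essentially the same route as the paper's proof: the $F(a,-1)$ case via Lemma~\ref{F(a,b)} and the $F(a,b,-1)$ case via Corollary~\ref{corollary.recursion} at $c=0$ combined with Corollary~\ref{cor:set 0} are exactly what the paper does. The only cosmetic difference is in the $F(-1)$ case, where the paper computes directly from $H(-1;q,t)=q^{-1}$ and~\eqref{H to F} rather than citing the $n=2$ formula from Example~\ref{example.2 and 3}; your extra remark on the validity of the recursion for $c=0$ is a reasonable clarification that the paper leaves implicit.
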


\begin{proof}
Since $H(-1;q,t)=q^{-1}$, Equation~\eqref{H to F} implies $F(-1)=0$.  On the other hand, Lemma~\ref{F(a,b)} immediately 
implies $F(a,-1)=0$. Finally, by Corollary \ref{corollary.recursion} we have \begin{equation*}
F(a-1,b-1,0)=F(a-1+1,b-1+1,0-1)+(qt)^0 F(a-1+0,b-1-0).
\end{equation*}
But by Corollary \ref{cor:set 0} we have $F(a-1,b-1,0)=F(a-1,b-1)$ and the lemma follows.
\end{proof}

The recursion of Corollary~\ref{corollary.recursion} implies the following ``two-step'' recursion. It has the 
advantage that it does not contain any negative arguments, which will be advantageous for the combinatorial
analysis of Section~\ref{section.proof}.

\begin{lemma}
\label{lemma.two step recurrence}
For $a\geqslant b-1$, $a,b \geqslant c-1\geqslant 0$, we have
\begin{multline*}
	F(a,b,c) = F(a+2,b+2,c-2) + (qt)^c \; F(a+c,b-c) + (qt)^{c-1} \; F(a+c,b-c+2)\\
	+\sum_{j=2}^{\min(a-b,2c)} (qt)^{b+j} \; F(a-b+2c-2j)
	-\sum_{j=a-b+1}^{1} (qt)^{b+j} \; F(a-b+2c-2j).
\end{multline*}
\end{lemma}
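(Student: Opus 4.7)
The plan is to iterate Corollary~\ref{corollary.recursion} twice, once on $F(a,b,c)$ and once on the resulting $F(a+1,b+1,c-1)$. The ``fixed'' pieces of these two recursions immediately produce the first three terms $F(a+2,b+2,c-2)$, $(qt)^{c}F(a+c,b-c)$ and $(qt)^{c-1}F(a+c,b-c+2)$ of the target identity. The two single-variable sums that also appear, namely
\[
\sum_{i=0}^{c-1}(qt)^{b+2c-2i}F(a-b-2c+4i) \quad \text{and} \quad \sum_{i=0}^{c-2}(qt)^{b+2c-1-2i}F(a-b-2c+2+4i),
\]
reindex (via $j=2c-2i$ in the first and $j=2c-1-2i$ in the second) to cover the even and odd $j$ in $[2,2c]$ respectively; they combine cleanly into the single sum
\[
S := \sum_{j=2}^{2c}(qt)^{b+j}\,F(a-b+2c-2j).
\]

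The core of the argument is then to prove that $S$ equals $\sum_{j=2}^{\min(a-b,2c)}(qt)^{b+j}F(a-b+2c-2j) - \sum_{j=a-b+1}^{1}(qt)^{b+j}F(a-b+2c-2j)$. The key idea is the involution $\sigma(j)=a-b+2c+1-j$: a direct computation using Proposition~\ref{proposition.negative} shows that whenever both $j$ and $\sigma(j)$ lie in $[2,2c]$, the contributions $(qt)^{b+j}F(a-b+2c-2j)$ and $(qt)^{b+\sigma(j)}F(a-b+2c-2\sigma(j))$ are exactly opposite. A fixed point of $\sigma$ exists only when $a-b$ is odd, and there the argument of $F$ equals $-1$, so Lemma~\ref{lemma.-1} makes the contribution vanish. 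Hence, after cancelling the $\sigma$-paired terms, $S$ reduces to the contribution of the ``unpaired'' indices $j\in[2,2c]$ for which $\sigma(j)\notin[2,2c]$, i.e.\ those with $j\leqslant a-b$ (the ``low-end'' unpaired block) or $j\geqslant a-b+2c$ (the ``high-end'' unpaired block).

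To match this with the right-hand side of the lemma, observe that the low-end unpaired block equals $[2,\min(a-b,2c)]$, where all arguments of $F$ are nonnegative, and is exactly the first target sum. The high-end unpaired block is empty unless $a\leqslant b$, so it only contributes in the two boundary cases permitted by the hypothesis $a\geqslant b-1$: the case $a=b$ (yielding the single index $j=2c$) and the case $a=b-1$ (yielding $j\in\{2c-1,2c\}$). In each of these cases the arguments of $F$ are negative, and one applies Proposition~\ref{proposition.negative} to rewrite the term, verifying by direct calculation that the result equals the ``reverse-indexed'' sum $-\sum_{j=a-b+1}^{1}(qt)^{b+j}F(a-b+2c-2j)$ (interpreted as the empty sum when $a\geqslant b+1$, a single term when $a=b$, and two terms when $a=b-1$). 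The main obstacle is not conceptual but bookkeeping: one must verify the involution's cancellation with the correct factor of $(qt)^{\,\bullet}$ coming from Proposition~\ref{proposition.negative}, and then carefully carry out the two boundary-case computations to show that the high-end unpaired terms reproduce the reverse-indexed sum exactly.
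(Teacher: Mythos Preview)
Your proposal is correct and follows essentially the same route as the paper: iterate Corollary~\ref{corollary.recursion} twice, merge the two single-variable sums into $S=\sum_{j=2}^{2c}(qt)^{b+j}F(a-b+2c-2j)$, and then exploit the cancellation coming from Proposition~\ref{proposition.negative} together with $F(-1)=0$. The only organizational difference is that the paper writes $S=\sum_{j=2}^{a-b}+\sum_{j=a-b+1}^{2c}-\sum_{j=a-b+1}^{1}$ and shows the middle block vanishes in one stroke via the substitution $r=a-b+2c-2j$, whereas you phrase the same cancellation as the involution $\sigma(j)=a-b+2c+1-j$ and then handle the high-end survivors by a short case check for $a=b$ and $a=b-1$; the underlying computation is identical.
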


\begin{remark}
If $a>b$ then the last sum is empty. If $a=b$ or $a=b-1$ then the next to last sum is empty, and the last sum contains one or two terms, respectively.
\end{remark}

\begin{proof}
Using the recurrence in Corollary~\ref{corollary.recursion} and then the same recurrence 
again on the term $F(a+1,b+1,c-1)$, we obtain
\begin{multline*}
	F(a,b,c) = F(a+2,b+2,c-2) + (qt)^c \; F(a+c,b-c) + (qt)^{c-1} \; F(a+c,b-c+2)\\
	+ \sum_{i=0}^{c-1} (qt)^{b+2c-2i} F(a-b-2c+4i) + \sum_{i=0}^{c-2} (qt)^{b+2c-1-2i} F(a-b-2c+2+4i).
\end{multline*}
The first three terms are the same as in the statement of the lemma. The last two sums can be combined to
\begin{equation*}
	\sum_{j=0}^{2c-2} (qt)^{b+2c-j} F(a-b-2c+2j),
\end{equation*}
or, reversing the order of the sum: 
\begin{equation}
\label{equation.combined sum}
	\sum_{j=2}^{2c} (qt)^{b+j} F(a-b+2c-2j).
\end{equation}
If $2c \leqslant a-b$ the corollary is proved. Otherwise we may break expression~\eqref{equation.combined sum}
above into two pieces to obtain
\begin{eqnarray*}
	\sum_{j=2}^{a-b} (qt)^{b+j} F(a-b+2c-2j) + 	\sum_{j=\max(a-b+1,2)}^{2c} (qt)^{b+j} F(a-b+2c-2j),
	\end{eqnarray*}
or equivalently 
\begin{multline*}
		\sum_{j=2}^{a-b} (qt)^{b+j} F(a-b+2c-2j) +\sum_{j=a-b+1}^{2c} (qt)^{b+j}F(a-b+2c-2j) \\
		-\sum_{j=a-b+1}^{1} (qt)^{b+j}F(a-b+2c-2j).
\end{multline*}
Therefore, if we show that the middle sum above is $0$ the corollary is proved.  However, setting $K=-a+b-2c$ we have
\[
	\sum_{j=a-b+1}^{2c} (qt)^{b+j}F(a-b+2c-2j)
	=\sum_{-K  \leqslant r \leqslant K-2} (qt)^{b-(K+r)/2}F(r),
\]
where the sum is over only those $r$ such that $2| (K+r)$.  Since $F(-1)=0$ this can be split into
\[
	\sum_{2  \leqslant r \leqslant K} (qt)^{b-(K-r)/2}F(-r)
	+\sum_{0  \leqslant s \leqslant K-2} (qt)^{b-(K+s)/2}F(s),
\]
where again the sum is only over $r$ with $2| (K+r)$.  However, applying  Proposition~\ref{proposition.negative} 
term-wise to the left sum gives precisely the opposite of the right sum.
\end{proof}

\section{Combinatorial expressions}
\label{section.combinatorial expressions}

In this section, we present a combinatorial formula for $F(a,b,c)$ when
$a+1 \geqslant b$, $a+1,b+1 \geqslant c\geqslant 0$. 

\subsection{Symmetric chain expression}
\label{section.sc}

Recall that $\lambda(a,b,c) = (a+b+c,b+c,c)$. We set $A= |\lambda(a,b,c)| = a + 2b +3c$, $\epsilon_{ij} = \max(0,i+j-b-c)$,
and $m_{cj} = c-j \pmod{2}$ for convenience. Define the \defn{symmetric chain} for $k\leqslant \ell$ as
\[
	[k,\ell]_{q,t} = q^\ell t^k + q^{\ell-1} t^{k+1} + \cdots + q^{k+1} t^{\ell-1} + q^k t^\ell.
\] 
We may write $F(a,b,c)$ as a sum of symmetric chains.

\begin{theorem}
\label{theorem.symmetric chains}
For nonnegative integers $a,b,c$ and $a+1 \geqslant b$, $a+1,b+1 \geqslant c$, we have
\[
	F(a,b,c) = \sum_{(i,j) \in \widetilde{Q}} [i+ \epsilon_{ij}, A-2i-j]_{q,t},
\]
where $\widetilde{Q} = \{(i,j) \mid 0\leqslant j \leqslant c, \; j\leqslant i \leqslant b+c, \; 2i+2j \leqslant a+b+2c-m_{cj}\}$.
\end{theorem}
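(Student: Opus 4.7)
The plan is to prove Theorem \ref{theorem.symmetric chains} by induction on $c$, verifying that the right-hand side satisfies the same two-step recursion and base cases as $F(a,b,c)$. For the base case $c=0$, Corollary \ref{cor:set 0} gives $F(a,b,0)=F(a,b)$, and one checks directly that $\widetilde{Q}$ collapses to $\{(i,0):0\leqslant i\leqslant b\}$ with $\epsilon_{i0}=0$, so the sum becomes $\sum_{i=0}^{b}[i,\,a+2b-2i]_{q,t}$; this matches the explicit formula of Lemma \ref{F(a,b)} term-by-term after rewriting its inner sum as a symmetric chain. For $c=1$ one can expand directly, or bootstrap from $c=0$ via the single-step recursion of Corollary \ref{corollary.recursion}.

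For the inductive step, assume the formula holds for all admissible triples with smaller third argument. Lemma \ref{lemma.two step recurrence} expresses $F(a,b,c)$ as $F(a+2,b+2,c-2)$, plus $(qt)^{c}F(a+c,b-c)$ and $(qt)^{c-1}F(a+c,b-c+2)$, plus a correction sum whose $F$-arguments are either nonnegative or vanish by Lemma \ref{lemma.-1}. By the inductive hypothesis, the main term is a sum of symmetric chains indexed by $\widetilde{Q}(a+2,b+2,c-2)$, while Lemma \ref{F(a,b)} and Example \ref{example.2 and 3} supply chain decompositions for the remaining terms. Multiplying a chain $[k,\ell]_{q,t}$ by $(qt)^{s}$ yields $[k+s,\,\ell+s]_{q,t}$, which is again a symmetric chain, so every contribution on the right-hand side of the recursion is itself a sum of symmetric chains.

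The main obstacle is the bookkeeping: one must show that the disjoint union of these shifted index sets is precisely $\widetilde{Q}(a,b,c)$, and that corresponding chains have matching endpoints $(i+\epsilon_{ij},\,A-2i-j)$. I would stratify $\widetilde{Q}(a,b,c)$ by the value of $j$ and by whether $\epsilon_{ij}$ vanishes. The ``interior'' stratum $\{(i,j):0\leqslant j\leqslant c-2,\ i+j\leqslant b+c\}$ should biject with $\widetilde{Q}(a+2,b+2,c-2)$, and here the parity condition encoded by $m_{cj}\equiv c-j\pmod 2$ is automatically preserved under $c\mapsto c-2$. The boundary strata $j=c$ and $j=c-1$ should absorb the contributions of $(qt)^{c}F(a+c,b-c)$ and $(qt)^{c-1}F(a+c,b-c+2)$ respectively, matching endpoints after the $(qt)^{s}$ shift. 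The strata with $\epsilon_{ij}>0$ should match the correction sum, with the shift $\epsilon_{ij}=i+j-b-c$ in the chain endpoint accounting for the $(qt)^{b+j}$ prefactor. Verifying that the extremal inequality $2i+2j\leqslant a+b+2c-m_{cj}$ is exactly the union of the shifted range conditions (including the negative-argument cancellations from Proposition \ref{proposition.negative} that produced the correction sum) is the most delicate part.

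An alternative, arguably cleaner, approach is to construct a symmetric chain decomposition of the poset of subdiagrams $\mu\subseteq\lambda(a,b,c)$ directly and bijectively, identifying each $(i,j)\in\widetilde{Q}$ with one chain and defining $\mathsf{stat}(\mu)$ as the position of $\mu$ in its chain. Such a construction would simultaneously yield the theorem, formula \eqref{eq: mystery stat}, and the unimodality corollary, and would bypass all case analysis of negative arguments; I expect this is the route taken in Section \ref{section.proof}.
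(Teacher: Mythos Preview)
Your approach is essentially the paper's: define the right-hand side as $F_{\comb}(a,b,c)$, show it satisfies the two-step recursion of Lemma~\ref{lemma.two step recurrence}, and check base cases. The paper streamlines the bookkeeping by first proving the recursion for the auxiliary series $H_{\comb}(a,b,c)=\sum_{(i,j)\in\widetilde{Q}}q^{A-2i-j}t^{i+\epsilon_{ij}}$ (monomials rather than full chains), then passing to $F_{\comb}$ via~\eqref{H to F}; it uses $c=0$ and $c=-1$ as base cases rather than $c=0$ and $c=1$.

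One correction to your stratification: since $b'+c'=b+c$, $a'+b'+2c'=a+b+2c$, and $m_{c'j}=m_{cj}$ under $(a',b',c')=(a+2,b+2,c-2)$, the set $\widetilde{Q}(a',b',c')$ is exactly $\widetilde{Q}(a,b,c)\cap\{j\leqslant c-2\}$, with no restriction on $i+j$, and $\epsilon_{ij}$ as well as the chain endpoints are unchanged. So the ``interior'' is all of $j\leqslant c-2$; the correction sum arises entirely from the large-$i$ portions of the boundary strata $j=c$ and $j=c-1$ (where $i>b$, resp.\ $i>b+1$), not from a separate $\epsilon_{ij}>0$ stratum cutting across all $j$.

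On your alternative: the paper does construct the explicit symmetric chain decomposition of $\{\mu\subseteq\lambda(a,b,c)\}$ (Sections~\ref{section.pseudoheads}--\ref{section.quasi}), but this alone does not prove Theorem~\ref{theorem.symmetric chains}---it only shows the right-hand side equals $\sum_{\mu}q^{\area(\mu)}t^{\stat(\mu)}$ for a suitable $\stat$. Connecting either side to $F(a,b,c)$ still requires the recursion, so the chain decomposition is used for Theorem~\ref{theorem.combinatorics} rather than as an independent route to Theorem~\ref{theorem.symmetric chains}.
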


The proof of Theorem~\ref{theorem.symmetric chains} is given in Section~\ref{section.proof}, see in
particular Corollary~\ref{corollary.F=Fcomb}. For the various conditions appearing in $\widetilde{Q}$, see 
the conditions for quasiheads in Table~\ref{table.indexing sets}. Note that Theorem~\ref{theorem.symmetric chains}
immediately implies that the right hand side is symmetric in $q$ and $t$.

\begin{remark}
Note that the conditions on $i$ and $j$ imply that $i+\epsilon_{ij} \leqslant A-2i-j$. Namely, since $i \leqslant b+c$,
we have $i+\epsilon_{ij} \leqslant \max(b+c,i+j)$. Furthermore, since $2i+2j \leqslant a+b+2c$, we have
$A-2i-j = A-2i-2j+j \geqslant b+c+j$ which in turn is greater or equal to $\max(b+c,i+j)$ given that $j\geqslant 0$ and 
$i\leqslant b+c$.
\end{remark}

\begin{remark}
Note that the interval $[i+ \epsilon_{ij}, A-2i-j]$ of integers that appears in the 
symmetric chains in Theorem~\ref{theorem.symmetric chains} will be called the \defn{area range} in 
Section~\ref{section.proof} as it is the range of the area statistic for the given symmetric chain.
\end{remark}

\begin{remark}
Surprisingly, the identity $H(a,b,c) = \sum_{(i,j) \in \widetilde{Q}} q^{A-2i-j} t^{i+\epsilon_{ij}}$ does not hold in general,
as the right hand side satisfies slightly different recursion relation, see Lemma \ref{lem: recursion H comb}.
\end{remark}

\subsection{Combinatorial expression}
\label{section.combinatorics}

The symmetric chain expression of Theorem~\ref{theorem.symmetric chains} leads to a purely combinatorial
expression for $F(a,b,c)$ as a sum of all subpartitions of $\lambda(a,b,c)$ with two associated statistics.
The \defn{area statistic} for $\lambda \subseteq \lambda(a,b,c)$ is given by 
\[
	\area(\lambda) = |\lambda(a,b,c)| - |\lambda|.
\]

The second statistic requires a little more notation. We set $L=a+b+c$.
Furthermore, we name the following cases:
\begin{enumerate}
\item[\textbf{Case 1.}] $z \geqslant \min(b+c-x, \lceil \frac{y-a}{2} \rceil)$
\begin{enumerate}
\item[\textbf{(a)}] $x+y-z+2 \epsilon_{yz}<L$
\item[\textbf{(b)}] $x+y-z+2 \epsilon_{yz}\geqslant L$
\begin{enumerate}
\item[\textbf{(i)}] $y+z<b+c$
\item[\textbf{(ii)}] $y+z\geqslant b+c$
\end{enumerate}
\end{enumerate}
\item[\textbf{Case 2.}] $z < \min(b+c-x, \lceil \frac{y-a}{2} \rceil)$.
\end{enumerate}
With this, we are ready to define the \defn{$t$-statistic}, where $\lambda=(x,y,z)$ is a partition
with $x\geqslant y \geqslant x \geqslant 0$ and $x\leqslant a+b+c$, $y \leqslant b+c$, $z\leqslant c$
\begin{equation}
\label{equation.statistics}
\stat(\lambda) = \begin{cases}
	x+\max(0,\lceil \frac{y-a}{2} \rceil, y+z-b-c,\lceil \frac{2y+z-L}{2} \rceil) & \text{in Case 1(a),}\\
	-L+2x+y-z+\max(0,\lceil \frac{L+z-x-a}{2} \rceil) & \text{in Case 1(b)(i),}\\
	2x+3y+z-(a+3b+3c) &\\
	\quad+\max(0,\lceil \frac{2b+2c-x-y}{2}\rceil, a+2b+2c-x-2y) & \text{in Case 1(b)(ii),}\\
	y+z & \text{in Case 2.}
	\end{cases}
\end{equation}

Our main result is the following.
\begin{theorem}
\label{theorem.combinatorics}
Let $a,b,c$ be nonnegative integers with $a+1 \geqslant b$, $a+1,b+1 \geqslant c$. Then
\[
	F(a,b,c) = \sum_{\lambda \subseteq \lambda(a,b,c)} q^{\mathsf{area}(\lambda)} t^{\mathsf{stat}(\lambda)}.
\]
\end{theorem}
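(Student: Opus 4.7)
The plan is to deduce Theorem \ref{theorem.combinatorics} from Theorem \ref{theorem.symmetric chains} by constructing an explicit symmetric chain decomposition of the poset of subdiagrams $\mu \subseteq \lambda(a,b,c)$, indexed by the set $\widetilde{Q}$. Each symmetric chain $[i+\epsilon_{ij}, A-2i-j]_{q,t}$ in Theorem \ref{theorem.symmetric chains} will correspond to a chain of subdiagrams whose $\area$ values exhaust the interval $[i+\epsilon_{ij}, A-2i-j]$, each value being attained exactly once. Since a symmetric chain expands as $[k,\ell]_{q,t} = \sum_{m=k}^{\ell} q^m t^{k+\ell-m}$, the $t$-exponent of $\mu$ in the chain indexed by $(i,j)$ must be $(i+\epsilon_{ij}) + (A - 2i - j) - \area(\mu)$. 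Hence, setting up the chain assignment $\phi \colon \mu \mapsto (i_\mu, j_\mu)$ reduces Theorem \ref{theorem.combinatorics} to verifying that formula~\eqref{equation.statistics} computes precisely this complementary value.

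First, I would define $\phi$ according to the four cases in the definition of $\stat$. The case analysis is governed by inequalities in $x, y, z, a, b, c$ and is designed so that Case 2 captures ``interior'' subdiagrams far from the boundary of $\lambda(a,b,c)$, while Cases 1(a), 1(b)(i), 1(b)(ii) handle subdiagrams near the top of a chain, where the $\epsilon_{ij}$ correction and the parity constraint $2i+2j \leqslant a+b+2c - m_{cj}$ become active. Then I would verify three things: (1) every subdiagram $\mu$ falls into exactly one case, giving a well-defined $\phi(\mu) \in \widetilde{Q}$; (2) for each $(i,j) \in \widetilde{Q}$, the fiber $\phi^{-1}(i,j)$ has cardinality $A - 2i - j - (i + \epsilon_{ij}) + 1$, and the map $\mu \mapsto \area(\mu)$ on this fiber is a bijection onto $[i + \epsilon_{ij}, A - 2i - j]$; (3) the piecewise formula~\eqref{equation.statistics} equals the complementary value $(i + \epsilon_{ij}) + (A - 2i - j) - \area(\mu)$ in each case.

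The cleanest way to organize the verification is to produce the inverse map sending a pair $\bigl((i,j), k\bigr)$ with $k \in [i+\epsilon_{ij}, A-2i-j]$ to a specific subdiagram $\mu(i,j,k) = (x,y,z)$, by declaring $z$, then $y$, then $x$ successively based on where $k$ lies inside the area range. For Case 2 one simply lets $x$ run through its full range for fixed $(y,z)$, yielding an area interval of length $L - y - z + 1$ where $L = a+b+c$; this will match Case 2 chains with indices $(i,j) = (y,z)$ satisfying the interior conditions $z < \min(b+c-x, \lceil (y-a)/2 \rceil)$, which keep us away from the parity and diagram-edge boundaries. The Case 1 subdivisions (a), (b)(i), and (b)(ii) then terminate the Case 2 chains near the boundary: they correspond, respectively, to the subdiagram sitting in the ``bulk'' portion of a short chain, the ``upper half'' before the $\epsilon_{ij}$ correction turns on, and the portion where $i+j > b+c$ forces $\epsilon_{ij} > 0$.

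The main obstacle will be the intricate case-by-case arithmetic showing that the four-case formula for $\stat$ agrees, on each region, with the complementary value prescribed by the chain. In particular, the $\epsilon_{ij}$ term produces a piecewise shift depending on whether $i+j$ exceeds $b+c$, which, together with the parity correction $m_{cj}$ and the various ceilings and maxima appearing in~\eqref{equation.statistics}, is responsible for the split between Cases 1(b)(i) and 1(b)(ii). Careful bookkeeping is required to check that the boundaries between Cases 1(a), 1(b)(i), 1(b)(ii), and 2 coincide exactly with transitions between adjacent chains, and that within each case the arithmetic identity simplifies to $(i + \epsilon_{ij}) + (A - 2i - j) - \area(\mu)$. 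Given Theorem \ref{theorem.symmetric chains}, the remaining proof is therefore a mechanical, if elaborate, verification, which is carried out in Section~\ref{section.proof}.
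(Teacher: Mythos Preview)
Your overall strategy is correct and matches the paper: deduce the theorem from Theorem~\ref{theorem.symmetric chains} via a symmetric chain decomposition of $\{\mu \subseteq \lambda(a,b,c)\}$, and define $\stat(\mu)$ as the complement $r + R - \area(\mu)$ on the chain $[r,R]$ containing $\mu$. However, two aspects of your sketch diverge from the paper's execution and, as stated, would lead you astray.

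First, the paper does not index the chain decomposition directly by quasiheads $\widetilde{Q}$. The chains (built as ``strings'' and ``appendages'' in Sections~\ref{section.strings}--\ref{section.chains}) are naturally indexed by three other sets introduced in Table~\ref{table.indexing sets}: tails, pseudoheads, and heads. The area range attached to a pseudohead $P_{ij}$ is $[i+\epsilon_{ij},\, A-2i-j + \max(0,\delta_{ij}-\epsilon_{ij})]$, which differs from the quasihead range $[s+\epsilon_{st},\, A-2s-t]$ by the correction term $\max(0,\delta_{ij}-\epsilon_{ij})$. Quasiheads enter only through a separate area-range-preserving bijection with heads (Proposition~\ref{proposition:H-Q}), which is what makes the recursion in Section~\ref{section.combinatorial recursion} tractable. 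So the assignment $\phi\colon\mu \mapsto (i_\mu,j_\mu) \in \widetilde{Q}$ you want does exist, but only as a composite through these intermediate indices; the $\stat$ formula~\eqref{equation.statistics} is computed from the pseudohead/tail/head of the chain, not directly from a quasihead.

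Second, your description of the cases is inverted. Case~2 does not capture ``interior'' subdiagrams with $x$ varying for fixed $(y,z)$; rather, Case~2 partitions are precisely the \emph{positive} ones lying in an appendage $A(H_x^y)$, with $x,y$ fixed and $z$ varying (Theorem~\ref{pos}), whence $\stat(\mu)=y+z$. Conversely, Case~1 partitions are the \emph{negative} ones lying in strings (Theorem~\ref{neg}); the three subcases 1(a), 1(b)(i), 1(b)(ii) correspond to $\mu$ lying in the first, second, or third union of the string in~\eqref{equation.SP union}, with chain index $P_{yz}$, $P_{(L+z-x)z}$, or $T^{(L-x)(b+c-y)}$ respectively. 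The parity correction $m_{cj}$ you invoke plays no role in the chain decomposition itself; it appears only in the head-to-quasihead bijection.
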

The proof of Theorem~\ref{theorem.combinatorics} is given in Section~\ref{section.proof comb}.

\begin{example}
\label{example.321}
Let us take $a=b=c=1$, so that $\lambda(1,1,1)=(3,2,1)$. The subpartitions $\lambda$ of $(3,2,1)$ together with their
monomial $q^{\area(\lambda)} t^{\stat(\lambda)}$ are listed in Table~\ref{table.example321}, organized in the chains associated to Theorem~\ref{theorem.symmetric chains}.

\begin{table}
\caption{Subpartitions of $(3,2,1)$ with their monomials $q^{\area(\lambda)} t^{\stat(\lambda)}$
\label{table.example321}}
\begin{equation*}
\begin{array}{c|ccccccc}
\text{chains} & \multicolumn{7}{c}{\text{subpartitions with statistics}}\\
\toprule
\left[0,6\right]_{q,t}& \emptyset & {\tableau[scY]{ \cr}} & {\tableau[scY]{ ,}} & {\tableau[scY]{ ,,}}
  & {\tableau[scY]{ \cr,,}} & {\tableau[scY]{ ,\cr,,}} & {\tableau[scY]{ \cr,\cr,,}}\\
  & q^6 & q^5t & q^4t^2 & q^3t^3 & q^2 t^4 & q t^5 & t^6\\
  \midrule
\left[1,4\right]_{q,t}& {\tableau[scY]{ \cr \cr}} & {\tableau[scY]{ \cr,}} & {\tableau[scY]{ ,\cr,}} & {\tableau[scY]{ \cr ,\cr,}} &&&\\
  & q^4t & q^3 t^2 & q^2 t^3 & q t^4 &&&\\
  \midrule
\left[1,3\right]_{q,t}& {\tableau[scY]{ \cr \cr \cr}} &  {\tableau[scY]{ \cr \cr ,}} & {\tableau[scY]{ \cr \cr ,,}} &&&&\\
  & q^3t & q^2 t^2 & q t^3\\
\bottomrule
\end{array}
\end{equation*}
\end{table}
\end{example}

\begin{remark}
Note that $\stat(\lambda)$ is in general different from $\mathsf{dinv}(\lambda)$ and $\mathsf{bounce}(\lambda)$.
As stated in~\cite[Exercise 3.19]{Haglund.2008}, $\mathsf{dinv}(\lambda)$ is the number of cells $x$ in $\lambda$
such that $\mathsf{leg}(x) \leqslant \mathsf{arm}(\lambda) \leqslant \mathsf{leg}(x) +1$. Here $\mathsf{leg}(x)$
is the number of cells in $\lambda$ above $x$ in the same column as $x$ and $\mathsf{arm}(x)$ is the number of
cells in $\lambda$ to the right of $x$ in the same row as $x$. Then $q^{\area(\lambda)} t^{\mathsf{dinv}(\lambda)}$
for the partitions in Table~\ref{table.example321} read row by row, top to bottom, left to right are
\[
	q^6, q^5 t, q^4 t^2, q^3 t^2, q^2 t^4, q t^5, t^6, q^4t, q^3 t^3, q^2 t^3, q t^3, q^3 t, q^2 t^2, q t^4,
\]
which differs from Table~\ref{table.example321}. Similarly, one may check that $\mathsf{bounce}(\lambda)$
is in general different from $\stat(\lambda)$.
\end{remark}

\begin{example}
\label{example.432}
Consider $(a,b,c)=(1,1,2)$, so that $\lambda(1,1,2)=(4,3,2)$. The subpartitions $\lambda$ of $(4,3,2)$ together with their
monomial $q^{\area(\lambda)} t^{\stat(\lambda)}$ are listed in Table~\ref{table.example432} organized in the chains 
associated to Theorem~\ref{theorem.symmetric chains}.

\begin{table}
\caption{Subpartitions of $(4,3,2)$ with their monomials $q^{\area(\lambda)} t^{\stat(\lambda)}$
\label{table.example432}}
\begin{equation*}
\begin{array}{c|cccccccccc}
\text{chains} & \multicolumn{10}{c}{\text{subpartitions with statistics}}\\
\toprule
\left[0,9\right]_{q,t}& \emptyset & \scalebox{0.6}{\tableau[scY]{ \cr}} & \scalebox{0.6}{\tableau[scY]{ &}} & 
 \scalebox{0.6}{\tableau[scY]{ &&}} &\scalebox{0.6}{\tableau[scY]{ &&&}} & \scalebox{0.6}{\tableau[scY]{\cr&&&}} 
 & \scalebox{0.6}{\tableau[scY]{&\cr&&&}} & \scalebox{0.6}{\tableau[scY]{&&\cr&&&}}
 & \scalebox{0.6}{\tableau[scY]{\cr&&\cr&&&}} & \scalebox{0.6}{\tableau[scY]{&\cr&&\cr&&&}}\\
  & q^9 & q^8t & q^7t^2 & q^6t^3 & q^5 t^4 & q^4 t^5 & q^3t^6 & q^2 t^7 & q t^8 & t^9\\
  \midrule
\left[1,7\right]_{q,t}& \scalebox{0.6}{\tableau[scY]{ \cr \cr}} & \scalebox{0.6}{\tableau[scY]{ \cr &}} & 
  \scalebox{0.6}{\tableau[scY]{ \cr &&}} & \scalebox{0.6}{\tableau[scY]{&\cr&&}} &
  \scalebox{0.6}{\tableau[scY]{&&\cr&&}} & \scalebox{0.6}{\tableau[scY]{\cr &&\cr&&}} &
  \scalebox{0.6}{\tableau[scY]{& \cr &&\cr&&}} &&&\\
  & q^7t & q^6 t^2 & q^5 t^3 & q^4 t^4 & q^3 t^5& q^2 t^6& q t^7 &&&\\
  \midrule
\left[1,6\right]_{q,t}& \scalebox{0.6}{\tableau[scY]{ \cr \cr \cr}} &  \scalebox{0.6}{\tableau[scY]{ \cr \cr &}} & 
  \scalebox{0.6}{\tableau[scY]{ \cr \cr &&}} & \scalebox{0.6}{\tableau[scY]{ \cr \cr &&&}} &
  \scalebox{0.6}{\tableau[scY]{ \cr & \cr &&&}} & \scalebox{0.6}{\tableau[scY]{ &\cr & \cr &&&}} &&&&\\
  & q^6 t & q^5 t^2 & q^4 t^3 & q^3 t^4 & q^2 t^5 & q t^6 &&&& \\
  \midrule
\left[2,5\right]_{q,t}& \scalebox{0.6}{\tableau[scY]{ &\cr&}} &  \scalebox{0.6}{\tableau[scY]{ \cr &\cr&}} &
   \scalebox{0.6}{\tableau[scY]{ \cr &\cr &&}} & \scalebox{0.6}{\tableau[scY]{ &\cr &\cr &&}} &&&&&&\\
   & q^5 t^2 & q^4 t^3 & q^3 t^4 & q^2 t^5 &&&&&&\\
   \midrule
\left[3,3\right]_{q,t}& \scalebox{0.6}{\tableau[scY]{ &\cr& \cr &}} &&&&&&&&&\\
   & q^3 t^3 &&&&&&&&\\ 
\bottomrule
\end{array}
\end{equation*}
\end{table}
\end{example}

\begin{remark}
As the parameter $a$ becomes larger with respect to $b$ and $c$, simplifications occur.  
\begin{itemize}
\item When $a\geqslant b+c-1$, the statistic in \eqref{equation.statistics} can be simplified by eliminating 
Case 2 and setting any expression that appears inside a ``$\lceil \cdot \rceil"$ to $0$. Moreover, in 
Table~\ref{table.indexing sets} the parameters $\delta_{ij}$ and $\delta^{EF}$ become uniformly $0$ and the 
condition \eqref{p4} becomes unnecessary.  
\item When $a \geqslant b+2c$, all the above simplifications hold.  Moreover, in Table~\ref{table.indexing sets}
the conditions~\eqref{t3}, \eqref{p3}, and~\eqref{q3} become unnecessary.
\end{itemize}

\end{remark}
\section{Partition chains and proofs}
\label{section.proof}

In this section, we assume that $a \geqslant b-1$, $a,b \geqslant c-1$. 
We provide four different indexing sets for symmetric chains that partition the set 
\[
	\Lambda := \{ \lambda \mid \text{$\lambda \subseteq \lambda(a,b,c)$ and $\lambda$ a partition}\}
\]
called tails, pseudoheads, heads, and quasiheads. The tails, pseudoheads, and quasiheads are defined as
\begin{equation}
\begin{aligned}[3]
	&\text{Set of \defn{tails}}  &&T := \{T^{EF} \mid \text{conditions \eqref{t1}-\eqref{t3} on $E,F$} \},\\
	&\text{Set of \defn{pseudoheads}}  &&P := \{ P_{ij} \mid \text{conditions \eqref{p1}-\eqref{p4} on $i,j$} \},\\
	&\text{Set of \defn{quasiheads}}  &&Q := \{ Q_{st} \mid \text{conditions \eqref{q1}-\eqref{q3} on $s,t$} \},
\end{aligned}
\end{equation}
where $T^{EF}$, $P_{ij}$, and $Q_{st}$ are defined in Table~\ref{table.indexing sets} and 
for convenience $A=a+2b+3c$ and $L=a+b+c$ throughout this section.
In addition, we write $P=P^- \cup P^+$, where
\[
	P^- = \{ P_{ij} \in P \mid \delta_{ij} \leqslant \epsilon_{ij}\} \qquad \text{and} \qquad
	P^+ =  \{ P_{ij} \in P \mid \delta_{ij} > \epsilon_{ij}\}
\]
and $\epsilon_{ij}$ and $\delta_{ij}$ are also given in Table~\ref{table.indexing sets}.

Finally, we define the set of \defn{heads} $H=H^- \cup H^+$, where 
$H^-=P^-$ and 
\[
	H^+ = \{(k,\ell,0) \mid a < \ell \leqslant k <b+c\}.
\]
For a negative head, the area range is the same as its area range as a pseudohead.  For positive heads we set the area 
range to
\[
	 R_k^\ell=[\ell, A-k-\ell].
\]

\begin{example}
In terms of the indexing sets of Table~\ref{table.indexing sets}, the symmetric chains 
in Table~\ref{table.example321} of Example~\ref{example.321} from top to bottom correspond to the tails 
$T^{00}=(3,2,1)$, $T^{10}=(2,2,1)$, $T^{01}=(3,1,1)$, 
the pseudoheads (and heads) $P_{00}=(0,0,0)$, $P_{10}=(1,1,0)$, $P_{11}=(1,1,1)$, 
and the quasiheads $Q_{00}=(0,0,0)$, $Q_{10}=(1,1,0)$, $Q_{11}=(1,1,1)$, 
respectively. The tails are the largest partitions in the chain and the pseudoheads (heads, quasiheads) are
the smallest partitions in each chain.
\end{example}

\begin{example}
The symmetric chains in Table~\ref{table.example432} of Example~\ref{example.432} from top to bottom correspond to 
the tails $T^{00}=(4,3,2)$, $T^{10}=(3,3,2)$, $T^{01}=(4,2,2)$, $T^{11}=(3,2,2)$, $T^{21}=(2,2,2)$, 
the pseudoheads $P_{00}=(0,0,0)$, $P_{10}=(1,1,0)$, $P_{11}=(1,1,1)$, $P_{21}=(2,2,1)$, $P_{22}=(2,2,2)$, 
the heads $P_{00}=(0,0,0)$, $P_{10}=(1,1,0)$, $P_{11}=(1,1,1)$, $H_2^2=(2,2,0)$, $P_{22}=(2,2,2)$,
and the quasiheads $Q_{00}=(0,0,0)$, $Q_{10}=(1,1,0)$, $Q_{11}=(1,1,1)$, $Q_{20}=(2,2,0)$, $Q_{22}=(2,2,2)$,
respectively. The tails are the largest partitions in the chain and the heads are the smallest partitions in the chain.
For the chain $[2,5]_{q,t}$, the head and pseudohead are not the same.
\end{example}

The set of tails, pseudoheads, heads, and quasiheads are all in area preserving bijection. That is, if $X,Y$ 
are one of the sets tails, pseudoheads, heads, and quasiheads and the area ranges for $x\in X$ and $y\in Y$ are $R_x$ 
and $R_y$, respectively, then there is a bijection $\Phi \colon X \to Y$ such that $R_x=R_{\Phi(x)}$ for all $x\in X$
(see Sections~\ref{section.pseudoheads}, \ref{section.heads} and~\ref{section.quasi}).

In Section~\ref{section.chains}, we define chains (using the strings of Section~\ref{section.strings})
and prove in Theorem~\ref{theorem.chain decomp} that the chains partition $\Lambda$, the set of all
subpartitions of $\lambda(a,b,c)$. In Section~\ref{section.combinatorial recursion}, using the quasiheads, 
we show that the combinatorial symmetric chain function $G(a,b,c)$ satisfies the same recursions as $F(a,b,c)$, 
thereby proving Theorem~\ref{theorem.symmetric chains}. The proof of Theorem~\ref{theorem.combinatorics}
is given in Section~\ref{section.proof comb}.

\begin{table}[!h]
\caption{Various indexing sets for chains \label{table.indexing sets}}
\begin{tabularx}{\linewidth}{l|p{10cm}}
\toprule
\defn{Tails} & $\qquad T^{EF} = (a+b+c-E,b+c-F,c)$\\
\hline
Conditions & \vspace{-0.5cm}{
\begin{subequations}
\begin{align}
	&\hspace{-6.8cm}0 \leqslant F \leqslant c-\epsilon_{EF} \hspace{-1cm}\label{t1}\\
	&\hspace{-6.8cm}2\epsilon^{EF} \leqslant E \leqslant F+a \hspace{-1cm}\label{t2}\\
	&\hspace{-6.8cm}4E+5F-3\epsilon^{EF} \leqslant a+3b+3c \hspace{-1cm}\label{t3}
\end{align}
\end{subequations}}\\[-5mm]
\hline	        
Area range & $\qquad R^{EF}= [E+F ,A-2E-3F+ \max(\epsilon^{EF}, \delta^{EF})]$\\
\hline
Notation & $\qquad \epsilon^{EF} = \max(0,E+2F-b-c)$ and $\delta^{EF} = \lceil\frac{E+F-a}{2}\rceil $\\
\bottomrule
\end{tabularx}

\vspace{4mm}

\begin{tabularx}{\linewidth}{l|p{10cm}}
\toprule
\defn{Pseudoheads} & $\qquad P_{ij} = (i,i,j)$\\
\hline
Conditions & \vspace{-0.5cm}{
\begin{subequations}
\begin{align}
	         & \hspace{-6.8cm}0 \leqslant j \leqslant c \hspace{0.7cm}\label{p1}\\
	         & \hspace{-6.8cm}j \leqslant  i \leqslant b+c \hspace{0.7cm}\label{p2}\\
	         & \hspace{-6.8cm}4i+j \leqslant a+3b+3c \hspace{0.7cm} \label{p3}\\
         	& \hspace{-6.8cm}i-2j \leqslant a \hspace{0.7cm} \label{p4}
\end{align}
\end{subequations}}\\[-5mm]
\hline
Area range & $\qquad R_{ij}= [i+ \epsilon_{ij}, A-2i-j+\max(0,\delta_{ij}-\epsilon_{ij})]$\\
\hline
Notation & {
\begin{flalign} \hspace{-6.8cm}\epsilon_{ij} = \max(0,i+j-(b+c)), \qquad
	\delta_{ij} = \lceil\frac{i+\epsilon_{ij}-a}{2}\rceil \label{equation.eps ij} \hspace{-4.7cm}\end{flalign}}\\[-5mm]
\bottomrule
\end{tabularx}

\vspace{4mm}

\begin{tabularx}{\linewidth}{l|p{10cm}}
\toprule
\defn{Quasiheads} & $\qquad Q_{st} = (s,s,t)$\\
\hline
Conditions & \vspace{-0.5cm}{
\begin{subequations}
\begin{align}
	          & \hspace{-6.8cm}0 \leqslant t \leqslant c \label{q1}\\
	         & \hspace{-6.8cm}t \leqslant s \leqslant b+c\label{q2}\\
	         & \hspace{-6.8cm}2s+2t \leqslant a+b+2c-m_{ct}\label{q3}
\end{align}
\end{subequations}}\\[-5mm]
\hline	        
Area range & $\qquad R\mms{st}=[s+ \epsilon_{st}, A-2s-t]$\\
\hline
Notation & $\qquad \epsilon_{st} = \max(0,s+t-(b+c))$ and $m_{ct} = (c-t) \pmod{2}$\\
\bottomrule
\end{tabularx}
\end{table}

\subsection{Area preserving bijection between tails and pseudoheads}
\label{section.pseudoheads}
We now construct an area preserving bijection between tails and pseudoheads.
\begin{lemma}
Define maps $\Psi$ and $\Psi^{-1}$ by
\begin{align*}
	 \Psi(E,F)&=(E+F-\epsilon^{EF},F+\epsilon^{EF}),\\
 	\Psi^{-1}(i,j)&=(i-j+2\epsilon_{ij}, j-\epsilon_{ij}).
\end{align*}
Then $\Psi$ induces  a bijection from $T$ to $P$ via the rule that if $\Psi(E,F)=(i,j)$ then 
\[
	(a+b+c-E,b+c-F,c) \mapsto (i,i,j).
\]
The inverse of this bijection is induced by $\Psi^{-1}$  via the rule that if $\Psi^{-1}(i,j)=(E,F)$ then
\[
	(i,i,j) \mapsto (a+b+c-E,b+c-F,c).
\]
Moreover, if $\Psi(E,F)=(i,j)$, then $R^{EF}=R_{ij}$.
\end{lemma}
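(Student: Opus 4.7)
The proof is essentially a direct computation once the right translation dictionary between the parameters is established. The plan is to first observe the key identity that under $\Psi(E,F)=(i,j)$ we have $i+j = (E+F-\epsilon^{EF})+(F+\epsilon^{EF}) = E+2F$, which immediately gives
\[
\epsilon^{EF} = \max(0,E+2F-b-c) = \max(0,i+j-(b+c)) = \epsilon_{ij}.
\]
Together with $E+F = i+\epsilon_{ij}$, this also yields $\delta^{EF} = \lceil (E+F-a)/2\rceil = \lceil (i+\epsilon_{ij}-a)/2\rceil = \delta_{ij}$. Using these identifications, a one-line substitution shows $\Psi^{-1}\circ\Psi = \mathrm{id}$, and an analogous computation gives $\Psi\circ\Psi^{-1}=\mathrm{id}$, so $\Psi$ and $\Psi^{-1}$ are inverse as maps of integer pairs.

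Next, I would verify that the conditions defining $T$ and $P$ correspond under $\Psi$. Condition \eqref{t1}'s upper bound $F\leqslant c-\epsilon^{EF}$ becomes $j\leqslant c$, i.e.\ \eqref{p1}; condition \eqref{t2}'s lower bound $2\epsilon^{EF}\leqslant E$ becomes $j\leqslant i$, half of \eqref{p2}; condition \eqref{t3}, after expanding $4E+5F-3\epsilon^{EF}$ using $E=i-j+2\epsilon_{ij}$, $F=j-\epsilon_{ij}$, simplifies to $4i+j\leqslant a+3b+3c$, which is \eqref{p3}; and condition \eqref{t2}'s upper bound $E\leqslant F+a$ simplifies to $i-2j\leqslant a-3\epsilon_{ij}$, which in the case $\epsilon_{ij}=0$ is \eqref{p4} and in the case $\epsilon_{ij}=i+j-b-c$ coincides with \eqref{p3} already obtained. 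Conversely, starting from a pseudohead, the remaining pieces ($F\geqslant 0$ and $i\leqslant b+c$, the second half of \eqref{p2}) split into the two cases of $\epsilon_{ij}$: when $\epsilon_{ij}=0$, $F=j\geqslant 0$ is \eqref{p1}, and when $\epsilon_{ij}>0$, $F=b+c-i\geqslant 0$ is the remaining half of \eqref{p2}. Thus the conditions correspond in both directions.

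Finally, the area ranges match by direct substitution. The lower endpoint is $E+F = i+\epsilon_{ij}$, agreeing with the lower endpoint of $R_{ij}$. For the upper endpoint, $2E+3F = 2i+j+\epsilon_{ij}$, so
\[
A-2E-3F+\max(\epsilon^{EF},\delta^{EF}) = A-2i-j-\epsilon_{ij}+\max(\epsilon_{ij},\delta_{ij}).
\]
Using $\epsilon^{EF}=\epsilon_{ij}$ and $\delta^{EF}=\delta_{ij}$, the identity $-\epsilon_{ij}+\max(\epsilon_{ij},\delta_{ij}) = \max(0,\delta_{ij}-\epsilon_{ij})$ (checked by splitting on the sign of $\delta_{ij}-\epsilon_{ij}$) yields exactly the upper endpoint of $R_{ij}$.

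The only part that requires some care is the bookkeeping in the second step: the condition $E\leqslant F+a$ bifurcates according to whether $\epsilon_{ij}$ vanishes, so one must verify that in each regime the surviving pseudohead conditions (\eqref{p3} or \eqref{p4}) are sufficient. This case analysis is the main, though not deep, obstacle; everything else is a transparent substitution driven by the identities $i+j=E+2F$ and $E+F=i+\epsilon_{ij}$.
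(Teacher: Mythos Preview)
Your approach is essentially the same as the paper's: establish $\epsilon^{EF}=\epsilon_{ij}$ and $\delta^{EF}=\delta_{ij}$ from $i+j=E+2F$ and $E+F=i+\epsilon_{ij}$, deduce that $\Psi$ and $\Psi^{-1}$ are mutual inverses on $\mathbb{Z}^2$, then translate the defining inequalities and match the area ranges. Your condensed treatment of the inequality correspondence is correct in spirit, though note that in the regime $\epsilon_{ij}>0$ you still need to observe that \eqref{p1} (lower) and \eqref{p4} follow from \eqref{p3} together with $i+j>b+c$ (and dually, when $\epsilon_{ij}=0$, that $i\leqslant b+c$ follows from $i+j\leqslant b+c$ and $j\geqslant 0$); the paper spells these out explicitly, and your final paragraph correctly flags this as the only place requiring care.
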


\begin{proof}
First we show that $\Psi$ is a bijection on $\mathbb{Z}^2$.  Indeed, note that if either $\Psi(E,F)=(i,j)$ or $\Psi^{-1}(i,j)=(E,F)$ 
we have $\delta^{EF}=\delta_{ij}$ and $\epsilon^{EF}=\epsilon_{ij}$.  Hence, a simple computation shows that  
$\Psi \circ \Psi^{-1}$ and  $\Psi^{-1} \circ \Psi$ are the identity on $\mathbb{Z}^2$.  Moreover, it is apparent that $\Psi$ 
preserves the area range. It remains to show that $\Psi(T) \subseteq P$ and $\Psi^{-1}(P)\subseteq T$.

First let $T^{EF} \in T$ and suppose $\Psi(E,F)=(i,j)$.  We must show that the conditions in~\eqref{p1}-\eqref{p4} hold:
 \begin{itemize}
 \item Condition~\eqref{p1}: The condition $0 \leqslant j \leqslant c$ translates to $0 \leqslant F+ \epsilon^{EF} \leqslant c$ 
 which is immediate from \eqref{t1}.
 \item Condition \eqref{p2}: The condition $j \leqslant i \leqslant b+c$ translates to $F+ \epsilon^{EF} \leqslant E+F -\epsilon^{EF}
  \leqslant b+c$. The left hand side follows from the left hand side of \eqref{t2}.  If $\epsilon^{EF}=0$, then we have 
  $E+2F \leqslant b+c$ so the right hand side follows.  
  Otherwise the right hand side reduces to $-F+b+c \leqslant b+c$ which follows from $F \geqslant 0$.
 \item Condition \eqref{p3}: The condition $4i+j \leqslant a+3b+3c$ translates to $4E+5F-3\epsilon^{EF} \leqslant a+3b+3c$
  which is \eqref{t3}.
 \item Condition \eqref{p4}: The condition $i-2j \leqslant a$ translates to $E-F-3\epsilon^{EF} \leqslant a$ which follows from 
 the right hand side of \eqref{t2}.
 \end{itemize}
 
Now let $P_{ij} \in P$ and suppose $\Psi^{-1}(i,j)=(E,F)$.  We must show that the conditions in \eqref{t1}-\eqref{t3} hold:
\begin{itemize}
\item Condition \eqref{t1}: The condition $0 \leqslant F \leqslant c-\epsilon^{EF}$ translates to 
$0 \leqslant j-\epsilon_{ij} \leqslant c-\epsilon^{EF}$. The left hand side follows from $j \geqslant 0$ unless $\epsilon_{ij}>0$, 
in which case it follows from $i \leqslant b+c$. The right hand side is equivalent to $j \leqslant c$ (since 
$\epsilon_{ij}=\epsilon^{EF}$).
\item Condition \eqref{t2}: The condition $2\epsilon^{EF} \leqslant E \leqslant F+a$ translates to 
$2\epsilon^{EF} \leqslant i-j+2\epsilon_{ij} \leqslant j-\epsilon_{ij}+a$.
The left hand side is equivalent to $j \leqslant i$ and the right hand side follows from \eqref{p4}.
\item Condition \eqref{t3}: The condition $4E+5F-3\epsilon^{EF} \leqslant a+3b+3c $ translates to 
$4i-4j+8\epsilon_{ij} +5j -5 \epsilon_{ij}-3\epsilon^{EF} \leqslant a+3b+3c $ which follows from \eqref{p3}.
\end{itemize}
\end{proof}

\subsection{Area preserving bijection between pseudoheads and heads}
\label{section.heads}

We now construct an area preserving bijection between pseudoheads and heads.
Set $\delta_k^\ell=\lceil\frac{\ell-a}{2}\rceil$ and $\epsilon_k^\ell=\max(k+\delta_k^\ell-b-c,0)$. 
\begin{lemma}
Define maps $\Theta$ and $\Theta^{-1}$ by
\begin{equation*}
\begin{split}
	 \Theta(i,j)&=(i+j-\delta_{ij},i+\epsilon_{ij}),\\
	 \Theta^{-1}(k,\ell)&=(\ell-\epsilon_k^\ell,k-\ell+\epsilon_k^{\ell}+\delta_k^\ell).
\end{split}
\end{equation*}
Then $\Theta$ induces a bijection from $P$ to $H$, which is the identity on $P^-$ and, if $\Theta(i,j)=(k,\ell)$ it acts as
$(i,i,j) \mapsto (k,\ell,0)$ on $P^+$.  The inverse of this map is the identity on $H^-$ and, if $\Theta^{-1}(k,\ell)=(i,j)$,
then $(k,\ell,0) \mapsto (i,i,j)$ on $H^+$.  Moreover if $\Theta(i,j)=(k,\ell)$ then $R_{ij}=R_k^\ell$.
\end{lemma}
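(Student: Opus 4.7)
The map $\Theta$ acts as the identity on $P^-$, and on this common subset the area ranges match trivially since for $(i,j) \in P^-$ the formula in Table~\ref{table.indexing sets} reduces to $R_{ij} = [i+\epsilon_{ij}, A-2i-j]$, which coincides with the area range used for $H^-$. The real content of the lemma is therefore the bijection $P^+ \leftrightarrow H^+$ via $\Theta$ and $\Theta^{-1}$, and I would organize the proof into three steps: (i) verify the composition identities at the $\mathbb{Z}^2$ level, (ii) establish the forward containment $\Theta(P^+) \subseteq H^+$, and (iii) establish the reverse containment $\Theta^{-1}(H^+) \subseteq P^+$.

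Step (i) is a clean computation. Starting from $(k,\ell)$ and applying $\Theta^{-1}$, one gets $i+j = k+\delta_k^\ell$ and $i+\epsilon_k^\ell = \ell$, which immediately force the key identities $\epsilon_{ij} = \epsilon_k^\ell$ and $\delta_{ij} = \delta_k^\ell$; substituting back then yields $\Theta(i,j) = (k,\ell)$, and the other composition is symmetric. With these identities in hand, area preservation is a short substitution: for $(i,j) \in P^+$ the pseudohead range collapses to $R_{ij} = [i+\epsilon_{ij}, A-2i-j+\delta_{ij}-\epsilon_{ij}]$ since $\delta_{ij} > \epsilon_{ij}$, and plugging in $\ell = i+\epsilon_{ij}$ and $k = i+j-\delta_{ij}$ gives exactly $[\ell, A-k-\ell] = R_k^\ell$.

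For step (ii) I check $a < \ell \leqslant k < b+c$ for the image $(k,\ell) = \Theta(i,j)$. The inequality $\ell > a$ translates via the standard integer-ceiling identity ($\lceil x \rceil > n \Leftrightarrow x > n$ for $n \in \mathbb{Z}$) to $\delta_{ij} > \epsilon_{ij}$, which is exactly the defining condition of $P^+$. The inequality $k < b+c$ rearranges to $\delta_{ij} > i+j-b-c$, which is automatic when $\epsilon_{ij}=0$ (since $\delta_{ij} \geqslant 1$) and is again the $P^+$ condition otherwise. The inequality $\ell \leqslant k$ rearranges to $\delta_{ij}+\epsilon_{ij}\leqslant j$ and splits on the sign of $\epsilon_{ij}$ to give precisely~\eqref{p4} or~\eqref{p3}.

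Step (iii) is the main work. Membership in $P^+$ (rather than $P^-$) follows from $\delta_{ij} = \delta_k^\ell \geqslant 1$ combined with $k < b+c$; conditions~\eqref{p3} and~\eqref{p4} come by reversing the equivalences in step (ii); and $j \geqslant 0$ together with $i \leqslant \ell \leqslant k < b+c$ are immediate. The remaining bounds $j \leqslant c$ and, most delicately, $j \leqslant i$ must be established by case analysis on the sign of $\epsilon_k^\ell$, drawing on the global hypotheses $a+1 \geqslant b,c$ and $b+1 \geqslant c$ (which I would package as $b+c \leqslant 2a+2$ together with $\ell \geqslant a+1$). The anticipated main obstacle is $j \leqslant i$ when $\epsilon_k^\ell > 0$: writing $m = k+\delta_k^\ell-b-c \geqslant 1$, the ceiling bound $\delta_k^\ell \leqslant (\ell-a+1)/2$ together with $k \leqslant b+c-1$ forces $m \leqslant (\ell-a-1)/2$, and combining this with $b+c \leqslant 2a+2$ and $\ell \geqslant a+1$ produces $2\ell \geqslant b+c+3m$, which rearranges exactly to $i-j \geqslant 0$. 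Once this delicate inequality is in hand, the remaining verifications are routine substitutions.
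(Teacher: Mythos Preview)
Your outline follows the paper's proof closely: both establish the $\mathbb{Z}^2$-level inverse identities $\epsilon_{ij}=\epsilon_k^\ell$ and $\delta_{ij}=\delta_k^\ell$, then verify the containments $\Theta(P^+)\subseteq H^+$ and $\Theta^{-1}(H^+)\subseteq P^+$ by direct inequality work leaning on the global hypotheses $a+1\geqslant b,c$ and $b+1\geqslant c$. Your treatment of the delicate bound $j\leqslant i$ when $\epsilon_k^\ell>0$ is a valid repackaging of the paper's argument.

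There is one genuine slip. In step~(iii) you write that conditions~\eqref{p3} and~\eqref{p4} ``come by reversing the equivalences in step~(ii)'', but step~(ii) showed that $\ell\leqslant k$ is equivalent to~\eqref{p4} when $\epsilon_{ij}=0$ and to~\eqref{p3} when $\epsilon_{ij}>0$. Reversing therefore yields only \emph{one} of the two conditions, determined by the sign of $\epsilon_{ij}$; the other must be checked separately. Both missing verifications are short---the paper handles~\eqref{p4} in general via $\ell-2\delta_k^\ell\leqslant a$ together with $\ell\leqslant k$, and~\eqref{p3} in the $\epsilon_k^\ell=0$ case via $k+\delta_k^\ell\leqslant b+c$ and $\ell\leqslant k$---but they do not follow from what you have written. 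A smaller omission: to assemble the identity on $P^-=H^-$ with the bijection $P^+\to H^+$ into a single bijection $P\to H$, you need $H^-\cap H^+=\varnothing$; the paper checks this in one line (a triple $(m,m,0)$ in $H^-$ forces $m\leqslant a$, while membership in $H^+$ requires $m>a$). Finally, a minor imprecision: in step~(ii) you say $\ell>a$ ``translates to'' $\delta_{ij}>\epsilon_{ij}$, but the ceiling identity actually gives $\delta_{ij}>\epsilon_{ij}\Leftrightarrow i-\epsilon_{ij}>a$, which is stronger than $\ell=i+\epsilon_{ij}>a$; only the needed implication (from $P^+$ to $\ell>a$) survives.
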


\begin{proof}
First we show that $\Theta$ is a bijection on $\mathbb{Z}^2$.  Indeed, note that if either $\Theta(i,j)=(k,\ell)$ or 
$\Theta^{-1}(k,\ell)=(i,j)$ we have $\delta_k^\ell=\delta_{ij}$ and $\epsilon_k^\ell=\epsilon_{ij}$.  Hence, a simple 
computation shows that $\Theta \circ \Theta^{-1}$ and $\Theta^{-1} \circ \Theta$ are the identity on $\mathbb{Z}^2$.  
Moreover, it is apparent that $\Theta$ preserves the area range.

Now suppose $P_{ij} \in P^+$, and $\Theta(i,j)=(k,\ell)$. We wish to show that $(k,\ell,0) \in H^+$.  This means we must 
verify the inequalities $a< i+ \epsilon_{ij} \leqslant i+j-\delta_{ij} <b+c$. The first inequality is immediate because 
$\delta_{ij} > \epsilon_{ij}$ is equivalent to $i-\epsilon_{ij} > a$.  The second inequality is the same as 
$\delta_{ij}+\epsilon_{ij} \leqslant j$ which is equivalent to $i-2j \leqslant a- 3 \epsilon_{ij}$.  If $\epsilon_{ij}=0$,
this is the same as the pseudohead condition $i-2j \leqslant a$.  Otherwise, it is equivalent to the pseudohead condition 
$4i+j \leqslant a+3b+3c$.  Finally, the last inequality is just $i+j-(b+c)< \delta_{ij}$ which is immediate since the former is 
less than or equal to $\epsilon_{ij}$ which is by assumption less than $\delta_{ij}$.

Now suppose $H_k^\ell \in H^+$ and $\Theta^{-1}(k,\ell)=(i,j)$.  We need to show that $\delta_{ij}>\epsilon_{ij}$ as well 
as  the pseudohead conditions \eqref{p1}-\eqref{p4} for $i=\ell-\epsilon_k^\ell$ and $j=k-\ell+\epsilon_k^{\ell}+\delta_k^\ell$ 
for any $(k,\ell)$ such that $a < \ell \leqslant k <b+c$:
\begin{itemize}
\item $\delta_{ij}>\epsilon_{ij}$. We have $\delta_{ij}-\epsilon_{ij}=\delta_k^\ell-\epsilon_k^\ell=\min(-k+b+c,\delta_k^\ell)$.
But this is a positive number because $k<b+c$ and $\ell>a$.
\item Condition \eqref{p1}: The condition $0 \leqslant j \leqslant c$ translates to $0 \leqslant k-\ell +\epsilon_k^\ell
+ \delta_k^\ell \leqslant c$. The left side holds since all of $k-\ell,\epsilon_k^\ell,\delta_k^\ell$ are nonnegative. Now, if 
$\epsilon_k^\ell=0$ then $k+\delta_k^\ell \leqslant b+c \leqslant a+c+1 \leq \ell+c$ which implies the right hand side.  
On the other hand, if $\epsilon_k^\ell>0$ the inequality becomes $2k-\ell+2\delta_k^\ell \leqslant b+c$ which would 
certainly hold if $2k +2\frac{\ell-a}{2}-\ell=2k-a<b+2c$. But this is true since $k<b+c$ and $k \leqslant b+c-1 \leqslant a+c$.
\item Condition \eqref{p2}: The left hand side of the condition $j\leqslant i \leqslant b+c$ translates to 
$ k-\ell+\epsilon_k^{\ell}+\delta_k^\ell \leqslant \ell-\epsilon_k^\ell$, that is, $k+2 \epsilon_k^\ell \leqslant 2\ell-\delta_k^\ell$.  
If $\epsilon_k^\ell=0$ this says $k \leqslant \lfloor\frac{3\ell+a}{2}\rfloor$. But 
$k \leqslant b+c-1 \leqslant (a+1) +(a+1) -1 =2a+1$.  
On the other hand $\ell>a$ implies $\lfloor\frac{3\ell+a}{2}\rfloor \leqslant 2a+1$. If $\epsilon_k^\ell>0$ the left hand inequality 
reduces to $3k \leqslant 2\ell-3\delta_k^\ell +2b+2c=\lfloor \frac{\ell+a}{2} \rfloor +a +2b +2c$ which would certainly hold 
if $2k+k=3k \leqslant 2a+2b+2c$.  But $2k \leqslant b+c-2$ and $k \leqslant 2a+1$ so this holds (in fact strictly).  
Moreover, the righthand side easily holds as $\ell-\epsilon_k^\ell \leqslant \ell \leqslant k <b+c$.
\item Condition \eqref{p3}: The condition $4i+j \leqslant a+3b+3c$ translates to $3\ell-3\epsilon_k^\ell +k+ \delta_k^\ell
 \leqslant a+3b+3c$.  If $\epsilon_k^\ell=0$, we have $k+ \delta_k^\ell \leqslant b+c$. Hence it is enough to show that
 $3\ell \leqslant a+2b+2c$.  But this is also true since $k+ \delta_k^\ell \leqslant b+c$ is equivalent to $2k+\ell \leqslant
 a+2b+2c$ and $\ell \leqslant k$. On the other hand, if $\epsilon_k^\ell>0$ the inequality we need to show reduces to 
 $3\ell-2k-2\delta_k^\ell \leqslant a$. Since $\ell-k \leqslant 0$ it suffices to show that $\ell- 2\delta_k^\ell \leqslant a$. 
 But this is clear since $\ell- 2\delta_k^\ell \leqslant \ell-2\frac{\ell-a}{2} = a$. 
\item Condition \eqref{p4}: The condition $i-2j \leqslant a$ translates to $3\ell-2k-3\epsilon_k^\ell-3\delta_k^\ell \leqslant a$.
But this follows from $\ell-2\delta_k^\ell \leqslant a$ and $\ell-k \leqslant 0$.
\end{itemize}
This shows that $\Theta$ induces a bijection from $P^+$ to $H^+$. Extending this map to all of $P$ by declaring it to be 
the identity on $P^-$ is also a bijection as long as $H^- \cap H^+ = \emptyset$.  Indeed, only 
partitions of the form $(m,m,0)$ may lie in both $H^-$ and $H^+$.  Moreover, being in $H^-$ implies 
$\delta_{m0}-\epsilon_{m0} \leqslant 0$ which means $m \leqslant a$.  On the other hand, being in $H^+$ would 
require $a<m$.
\end{proof}

\subsection{Strings}
\label{section.strings}
We now consider the set of all partitions $\Lambda$ which fit inside the partition $\lambda(a,b,c)=(a+b+c,b+c,c)$.  We call such 
a partition $(x,y,z)$ \defn{positive} if $z<\min(b+c-x,\lceil \frac{y-a}{2}\rceil)$ and negative otherwise.
Write $\Lambda=\Lambda^- \cup \Lambda^+$.  

Let $P_{ij}=(i,i,j)$ be a pseudohead with $\Psi(E,F)=(i,j)$. Suppose that $T^{EF}=(p,q,c)$.  We define the string 
associated to $P_{ij}$ and $T^{EF}$ to be
\begin{equation}
\label{equation.SP union}
	S(P_{ij})=S(T^{EF})=\bigcup_{i\leqslant x < p} (x,i,j)  \bigcup_{i \leqslant y < q} (p,y,j) 
	\bigcup_{j \leqslant z \leqslant c} (p,q,z).
\end{equation}

\begin{lemma}
$(p,q,c)$ is a partition containing $(i,i,j)$ and is contained in $\lambda(a,b,c)$.  Thus every $S(P_{ij})$ is a 
nonempty set of partitions contained in $\lambda(a,b,c)$.  
\end{lemma}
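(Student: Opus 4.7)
The plan is to unpack the bijection $\Psi$ and then verify the three required facts: (i) $(p,q,c)$ is a partition, (ii) $(i,i,j)\subseteq(p,q,c)$, and (iii) $(p,q,c)\subseteq\lambda(a,b,c)$. Once these are established, the point $(p,q,c)$ is itself a member of $S(P_{ij})$ (it appears in the last union of \eqref{equation.SP union} at $z=c$), which yields the nonemptiness claim.

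I would begin by writing $p = a+b+c-E$, $q = b+c-F$, $i = E+F-\epsilon^{EF}$, and $j = F+\epsilon^{EF}$, so that every inequality becomes linear in $E, F, \epsilon^{EF}$. Containment in $\lambda(a,b,c)$ is immediate: $p\leqslant a+b+c$ follows from $E\geqslant 2\epsilon^{EF}\geqslant 0$ (condition \eqref{t2}), $q\leqslant b+c$ follows from $F\geqslant 0$ (condition \eqref{t1}), and the last coordinate is $c$ itself. For the containment $(i,i,j)\subseteq(p,q,c)$, the inequality $c\geqslant j$ is just \eqref{t1}; the inequality $q\geqslant i$ rearranges to $E+2F\leqslant b+c+\epsilon^{EF}$, which is automatic from the definition $\epsilon^{EF}=\max(0,E+2F-b-c)$; the inequality $p\geqslant i$ rearranges to $2E+F\leqslant a+b+c+\epsilon^{EF}$, which I would verify by splitting on whether $\epsilon^{EF}=0$ (use $E+2F\leqslant b+c$ together with $E\leqslant F+a$ from \eqref{t2}) or $\epsilon^{EF}>0$ (the inequality reduces to $E\leqslant F+a$, i.e.\ \eqref{t2}).

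The last step is to show $(p,q,c)$ is a partition, that is, $p\geqslant q\geqslant c$. The inequality $p\geqslant q$ is exactly the right half of \eqref{t2}. The genuinely delicate inequality, and the main obstacle, is $q\geqslant c$, equivalent to $F\leqslant b$. This does not drop out of \eqref{t1} alone since the latter only says $F\leqslant c$, and the standing hypothesis is only $b\geqslant c-1$. I would handle it by a short case analysis on $\epsilon^{EF}$. When $\epsilon^{EF}=0$, \eqref{t1} gives $E+2F\leqslant b+c$, so $2F\leqslant b+c$; if $c\leqslant b$ then $F\leqslant b$ directly, whereas if $c=b+1$ then $2F\leqslant 2b+1$ forces $F\leqslant b$ by integrality. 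When $\epsilon^{EF}>0$, \eqref{t1} unfolds to $3F+E\leqslant 2c+b$; if $c\leqslant b$ then $3F\leqslant 3b$, and if $c=b+1$ then $3F\leqslant 3b+2$, which again forces $F\leqslant b$ by integrality. This handles all cases.

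Having verified (i)--(iii), the triple $(p,q,c)$ is a partition satisfying $(i,i,j)\subseteq(p,q,c)\subseteq\lambda(a,b,c)$, and in particular it lies in $S(P_{ij})$ via the $z=c$ term of the third union in \eqref{equation.SP union}. To conclude that every element of $S(P_{ij})$ is a partition contained in $\lambda(a,b,c)$, one notes that each $(x,i,j)$ in the first union satisfies $i\geqslant x\geqslant i\geqslant j$ and sits inside $(p,q,c)\subseteq\lambda(a,b,c)$; similarly for the second and third unions, using \eqref{p1}, \eqref{p2}, and the already-verified inequalities. This completes the proof.
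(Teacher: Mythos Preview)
Your proof is correct and follows essentially the same approach as the paper. Two minor remarks: (1) your direct verification that $p\geqslant i$ is unnecessary, since once you have $p\geqslant q$ and $q\geqslant i$ this follows immediately (this is how the paper handles it); (2) in your last paragraph you wrote ``$i\geqslant x\geqslant i\geqslant j$'' for elements of the first union, which should read $x\geqslant i\geqslant j$. Your treatment of the delicate inequality $q\geqslant c$ (equivalently $F\leqslant b$) via a case split on $\epsilon^{EF}$ differs in detail from the paper's contradiction argument in the $b=c-1$ case, but both are valid.
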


\begin{proof}
It is clear that $p=L-E \leqslant L$ by the left side of~\eqref{t2}. Furthermore, $q=b+c-F \leqslant b+c$ by the left side 
of~\eqref{t1}. Obviously $c \leqslant c$. Hence $(p,q,c)$ is contained in $\lambda(a,b,c)$.

Now $p-q=L-E-(b+c-F)=a-E+F \geqslant 0$ by the right side of \eqref{t2}.  Furthermore, $q=b+c-F \geqslant c$ follows from 
$F \leqslant c$ (which comes from the right side of \eqref{t1}) unless $b<c$.  If $b<c$, we must have $b=c-1$ so we just 
need to show $q=(c-1)+c-F \geqslant c$ 
or equivalently $F \leqslant c-1$.  Indeed, if $F=c$ then $\epsilon^{EF}=\max(2c+E-(2c-1),0)=\max(E+1,0)>0$ 
by the left-hand side of~\eqref{t2}. Thus the right-hand side of \eqref{t1} implies $F \leqslant c-1$ contradicting the 
assumption $F=c$. This shows that $(p,q,c)$ is indeed a partition.

Finally it is obvious that $j \leqslant c$ and since we already showed that $p \geqslant q$ all that remains to show is 
$q \geqslant i$.  But this says $b+c-F \geqslant i$ or $b+c-(j-\epsilon_{ij}) \geqslant i$ which is equivalent to 
$i+j-(b+c) \leqslant \epsilon_{ij}$ which follows immediately from~\eqref{equation.eps ij}.
\end{proof}

\begin{theorem} \label{neg}
Let $\mu \in \Lambda^-$. Then there exists unique  $P_{ij} \in P$ such that $\mu \in S(P_{ij})$.  Conversely, if 
$\mu \in S(P_{ij})$ for some pseudohead $P_{ij}$, then $\mu \in \Lambda^-$.
\end{theorem}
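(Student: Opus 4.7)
The plan is to prove the two directions separately. For the converse direction, fix a pseudohead $P_{ij} \in P$, let $(E,F) = \Psi^{-1}(i,j)$ and write $T^{EF} = (p,q,c)$. Given $\mu = (x,y,z) \in S(P_{ij})$, I would case on which of the three subsets in \eqref{equation.SP union} contains $\mu$ and in each case verify directly that $z \geq \min(b+c-x, \lceil (y-a)/2 \rceil)$. The key algebraic identities are $b+c-p = E-a$ (from $p = L-E$) together with the $\Psi$-relations $i+j = E+2F$ and $i + \epsilon_{ij} = E+F$. For example, in the part 3 case $\mu = (p,q,z)$ with $z \geq j = F + \epsilon^{EF}$, I would establish $j \geq \min(E-a, \lceil (b+c-F-a)/2 \rceil)$ drawing on \eqref{t1}--\eqref{t3} as needed; the parts 1 and 2 cases are analogous, invoking in addition the pseudohead conditions \eqref{p1}--\eqref{p4}.

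For the forward direction, given $\mu = (x,y,z) \in \Lambda^-$, I would define the pseudohead $(i,j)$ by a case analysis mirroring the three parts of the string. First attempt to place $\mu$ in part 3: set $(E,F) = (L-x,\, b+c-y)$ and take $(i,j) = \Psi(E,F)$, provided $(E,F)$ satisfies \eqref{t1}--\eqref{t3} and the resulting $j = F + \epsilon^{EF}$ is at most $z$. If this fails, set $j = z$ and try part 1 by taking $(i,j) = (y,z)$, which succeeds if $(y,z)$ satisfies \eqref{p1}--\eqref{p4} and the induced tail $T^{\Psi^{-1}(y,z)}$ has first coordinate strictly greater than $x$. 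Otherwise $\mu$ must lie in part 2, where $E = L-x$ is fixed, $F$ is forced by $F + \epsilon^{EF} = z$, and one sets $(i,j) = \Psi(E,F)$. The negativity of $\mu$, namely $x+z \geq b+c$ or $a+2z \geq y$, is precisely what ensures that at least one case succeeds. Uniqueness is then automatic from the hierarchy of the case split: part 3 fixes both $(x,y)$ relative to the tail, while parts 1 and 2 both require $j = z$, and part 1 further requires $i = y$; these three prescriptions are mutually exclusive.

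The main obstacle will be rigorously handling the boundary regions, where the two subconditions of negativity coincide or where the $\max$/ceiling operators in $\epsilon^{EF}$, $\epsilon_{ij}$ and $\delta_{ij}$ switch branches, so that a well-defined pseudohead is produced in every subcase without overlap. A cleaner alternative would be to first prove disjointness of the strings, which is immediate from reading $(i,j)$ off the coordinates of $\mu$ in each of the three parts, and then compare the cardinality $\sum_{P_{ij}\in P} |S(P_{ij})|$ with $|\Lambda^-|$, reducing existence to a counting argument; however, the explicit inverse assignment is apt to be needed for the combinatorial analysis that follows, so I would favor the direct case analysis.
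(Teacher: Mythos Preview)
Your converse direction is essentially the paper's argument (the paper compresses your three cases into two, since parts~2 and~3 both have $x=p$ and the bound $z\geqslant b+c-x$ follows uniformly from condition~\eqref{p4}).

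For the forward direction your plan diverges from the paper and has a genuine gap in the uniqueness argument. The paper does not ``try in order''; it introduces $\mathcal{E}(y,z)=y-z+2\epsilon_{yz}$ and splits on the two sharp conditions $x+\mathcal{E}(y,z)<L$ and $y+z<b+c$, producing three mutually exclusive, exhaustive cases that correspond exactly to the three unions of $S(P_{ij})$. In each case the paper first proves \emph{uniqueness} by showing that if $\mu$ were in one of the other two unions of \emph{any} string, the defining inequality of that case would be violated (e.g.\ if $x+\mathcal{E}(y,z)<L$ and $\mu=(p,y,j)$ lies in the second union of some $S(P_{ij})$, then $x+\mathcal{E}(y,z)\geqslant p+\mathcal{E}(i,j)=L$, a contradiction). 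Only then does it verify the pseudohead/tail conditions.

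Your uniqueness sketch does not achieve this. You observe that each of the three prescriptions determines a unique candidate pseudohead from the coordinates of~$\mu$, but that only says: if $\mu$ lies in part~$k$ of some string, then that string is determined. It does not exclude $\mu$ from lying in part~1 of one string and part~3 of another; in that scenario the two prescriptions hand you two different pseudoheads, and there is no contradiction yet. The alternative counting argument you mention has the same defect, since ``disjointness of the strings'' is precisely the statement in question and is not ``immediate'' from reading off the candidate in each part. What you are missing is exactly the inequality $x+\mathcal{E}(y,z)$ versus $L$ (and secondarily $y+z$ versus $b+c$) as an intrinsic invariant of $\mu$ that pins down which of the three unions can contain it, independent of the string.
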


\begin{proof}
Let $\mu=(x,y,z) \in \Lambda^-$. Let us set:
\[
	\mathcal{E}(y,z)=y-z+2\epsilon_{yz} \qquad \text{and} \qquad
	\mathcal{F}(y,z)=z-\epsilon_{yz}.
\]
We prove the first statement in three cases.
\begin{enumerate}
\item{First suppose $x+\mathcal{E}(y,z)<L$. Note that this corresponds to Case 1(a) in Section~\ref{section.combinatorics}.
To show uniqueness suppose $\mu \in S(P_{ij})$ with tail $T^{(L-p)(b+c-q)}$. 

If $\mu$ is from the second union in~\eqref{equation.SP union}, then $\mu=(p,y,j)$ for $i \leqslant y <q$. Since 
$\mathcal{E}(y,j) \geqslant \mathcal{E}(i,j)$ we have:
\begin{equation*}
	x+\mathcal{E}(y,z)=p+\mathcal{E}(y,j) \geqslant p+ \mathcal{E}(i,j)=  (L-\mathcal{E}(i,j))+\mathcal{E}(i,j)=L.
\end{equation*}

If $\mu$ is from the third union in~\eqref{equation.SP union}, then $\mu = (p,q,z)$ for $j \leqslant z \leqslant c$. 
Now $q=b+c-\mathcal{F}(i,j)$ implies $q+j \geqslant b+c$. From this, it follows that $\mathcal{E}(q,z) \geqslant 
\mathcal{E}(q,j)$.  Since $\mathcal{E}(q,j) \geqslant \mathcal{E}(i,j)$ as well we have:
\begin{equation*}
	x+\mathcal{E}(y,z)=p+\mathcal{E}(q,j)  \geqslant p+ \mathcal{E}(i,j)=  (L-\mathcal{E}(i,j))+\mathcal{E}(i,j)=L.
\end{equation*}
This means that $\mu$ can only come from the first union, so that we must have $i=y$ and $j=z$.  Hence $\mu$ can be 
in no other string than $S(P_{yz})$.

Now we show that $\mu \in S(P_{yz})$.  First we need to check $P_{yz}$ satisfies the pseudohead conditions:
\begin{itemize}
\item Condition \eqref{p1}: $0 \leqslant z \leqslant c$ is immediate.
\item Condition \eqref{p2}: $z\leqslant y \leqslant b+c$ is immediate.
\item Condition \eqref{p3}: $4y+z \leqslant a+3b+3c$.  If $\epsilon_{yz}=0$ then the original assumption becomes $x+y-z<L$ 
and we also have $y+z \leqslant b+c$.  Adding the first inequality to twice the second yields $x+3y+z < a+3b+3c$ and we 
are done since $y \leqslant x$.  If $\epsilon_{yz}>0$ then the original assumption reduces directly to $x+3y+z < a+3b+3c$ 
so we are done for the same reason.
\item Condition \eqref{p4}: $y-2z \leqslant a$. First suppose $\epsilon_{yz}=0$.  Now, since $\mu$ is a negative partition we 
either have $z \geqslant \lceil \frac{y-a}{2}\rceil$ which would mean $2z \geqslant y-a$ and we would be done, or else, 
$z \geqslant b+c-x$. In the second case: $\epsilon_{yz}=0$ along with the original assumption  imply $x+y-z<L$, and 
subtracting from this the inequality $x+z \geqslant b+c$ gives $y-2z<a$.  Finally, if $\epsilon_{yz}>0$ then we have 
$y+z>b+c$.  Subtracting three times this from $4y+z \leqslant a+3b+3c$ (which we have already verified) gives $y-2z<a$.
\end{itemize}
Now that $P_{yz}$ is in fact a pseudohead it is obvious that $\mu \in S(P_{yz})$
(in the first union) because $x<L-\mathcal{E}(y,z)$.}

\item{Now suppose $x+\mathcal{E}(y,z)\geqslant L$ and $y+z<b+c$. Note that this corresponds to 
Case 1(b)(i) in Section~\ref{section.combinatorics}. To show uniqueness suppose $\mu \in S(P_{ij})$ 
with tail $T^{(L-p)(b+c-q)}$. 

If $\mu$ is from the first union in~\eqref{equation.SP union}, then $\mu=(x,i,j)$ for $i \leqslant x <p$. But this means 
$x+\mathcal{E}(i,j)<L$, that is, $x+\mathcal{E}(y,z)<L$, contradicting our assumption.

If $\mu$ is from the third union in~\eqref{equation.SP union}, then $\mu = (p,q,z)$ for $j \leqslant z \leqslant c$.
Now $q=b+c-\mathcal{F}(i,j)$ where $\mathcal{F}(i,j)=j$ because $y+z < b+c$ means $i+j <b+c$.  Thus $q+j=b+c$ so 
$q+z \geqslant b+c$, that is, $y+z \geqslant b+c$, again contradicting our assumption.

This means that $\mu$ can only come from the second union in~\eqref{equation.SP union}.
In this case, $\mu$ is of the form $(p,y,j)$ for $i \leqslant y<q$.  
In particular, $x=p=L-\mathcal{E}(i,j)=L-\mathcal{E}(i,z)$.  But $i+z \leqslant y+z <b+c$ so $\epsilon_{iz}=0$ and this reduces to 
$x=L-i+z$.  Therefore, $i=L+z-x$, and we see $\mu$ can be in no other string than $S(P_{(L+z-x)z})$.

Now we show that $\mu \in S(P_{(L+z-x)z})$.  First we need to check that $P_{(L+z-x)z}$ satisfies the pseudohead conditions:
\begin{itemize}
\item Condition \eqref{p1}: $0 \leqslant z \leqslant c$ is immediate.
\item Condition \eqref{p2}: $z\leqslant L+z-x \leqslant b+c$.  The left hand side is immediate because $x \leqslant L$.
On the other hand the first original assumption implies $L+z-x \leqslant \mathcal{E}(y,z)+z$ and the second original 
assumption implies $\mathcal{E}(y,z)=y-z$.  Thus $L+z-x \leqslant y \leqslant b+c$.
\item Condition \eqref{p3}: $4(L+z-x)+z \leqslant a+3b+3c$.  Since $\mu$ is a negative partition we have 
$z \geqslant \min(b+c-x,\lceil \frac{y-a}{2}\rceil)$.  First suppose that $z \geqslant \lceil \frac{y-a}{2}\rceil$.  
This along with the fact that $(L+z-x)+z \leqslant y+z <b+c$ implies that: 
\begin{eqnarray*}
4(L+z-x)+z=(L+z-x)+3(L+z-x+z)-2z \\ 
\leqslant y+3(b+c)+(a-y) = a+3b+3c.
\end{eqnarray*}
Otherwise we must have $z< \lceil \frac{y-a}{2}\rceil$, but $z \geqslant b+c-x$. Now $\lceil \frac{y-a}{2}\rceil>b+c-x$ 
means $y>a+2b+2c-2x$.  Since $y<b+c-z$ this gives $a+2b+2c-2x<b+c-z$ which becomes $2x-z>a+b+c$.  
Adding this inequality to $x+z \geqslant b+c$ (which is equivalent to the assumption on hand) we obtain $3x>a+2b+2c$. 
At this point we suppose for the sake of contradiction that $4(L+z-x)+z > a+3b+3c$.  This means 
$(L+z-x+z)+3L+3z-3x>a+3b+3c$ which in light of the previous equation yields $(L+z-x+z)+3L+3z > 2a+5b+5c$. 
This in turn gives $3L+3z>2a+4b+4c$ since $L+z-x+z<b+c$.  Finally, we are left with $3z>-a+b+c$.  But at the 
same time $4(L+z-x)+z > a+3b+3c$ means $4(L+z-x+z)-3z > a+3b+3c$.  And again making use of $L+z-x+z<b+c$ 
this implies $3z<-a+b+c$, which is a contradiction. Hence we must have $4(L+z-x)+z \leqslant a+3b+3c$.
\item Condition \eqref{p4}: $(L+z-x)-2z \leqslant a$.  Again, $\mu$ is negative so we may consider two cases.  
First, if $z \geqslant \lceil \frac{y-a}{2}\rceil$ then $L+z-x \leqslant y$ implies $L+z-x-2z=(L+z-x)-y+a\leqslant a$.  
On the other hand if $z \geqslant b+c-x$ then $(L+z-x)-2z=L-x-z \leqslant L-b-c=a$.
\end{itemize}

Now since $P_{(L+z-x)z}$ is indeed a pseudohead, the facts that $L-\mathcal{E}(L+z-x,z)=L-(L-x)=x$ and 
$L+z-x \leqslant y <b+c-\mathcal{F}(L+z-z,z)$ (since the latter is equal to $b+c-z$) imply that  $\mu \in S(P_{(L+z-x)z})$
(in the second union in~\eqref{equation.SP union}).}

\item{ Now suppose $x+\mathcal{E}(y,z)\geqslant L$ and $y+z \geqslant b+c$.  
Note that this corresponds to Case 1(b)(ii) in Section~\ref{section.combinatorics}. 
To show uniqueness suppose $\mu \in S(P_{ij})$ with tail $T^{(L-p)(b+c-q)}$. 

If $\mu$ is from the first union in~\eqref{equation.SP union}, then $\mu = (x,i,j)$ for $i \leqslant x <p$. This means 
that $x+\mathcal{E}(y,z)=x+\mathcal{E}(i,j)<L$, contradicting our assumption.

If $\mu$ is from the second union in~\eqref{equation.SP union}, then $\mu=(p,y,j)$ for $i \leqslant y <q$. 
Thus $y<b+c-\mathcal{F}(i,j)$ which is equivalent to $y+j-(b+c)<\epsilon_{ij} \leqslant \epsilon_{yj}$ which implies 
$\epsilon_{yj}=0$ and $y+j-(b+c)<0$, contradicting $y+j=y+z \geqslant b+c$.

This means that $\mu$ can only come from the third union in~\eqref{equation.SP union}, so that we must have $x=p$ 
and $y=q$.  Hence $\mu$ can be in no other string than $S(T^{(L-x)(b+c-y)})$.

Now we show that $\mu \in S(T^{(L-x)(b+c-y)})$. First we need to check that $T^{(L-x)(b+c-y)}$ satisfies the tail 
conditions \eqref{t1}-\eqref{t3} for $E=L-x$ and $F=b+c-y$:
\begin{itemize}
\item Condition \eqref{t1}: $0 \leqslant F \leqslant c- \epsilon^{EF}$.  This means $0 \leqslant b+c-y \leqslant
c- \epsilon^{(L-x)(b+c-y)}$. The left-hand side follows from $y \leqslant b+c$.  The right-hand side says 
$\epsilon_{(L-x)(b+c-y)} \leqslant y-b$. We may assume $\epsilon^{(L-x)(b+c-y)}=a+2b+2c-x-2y$ because if 
it were $0$ then the fact that $z \leqslant c$ and $y+z \geqslant b+c$ imply $y-b \geqslant 0$ which would prove this side.
Under this assumption what we need to show becomes $x+3y \geqslant a+3b+2c$.  But $y+z \geqslant b+c$ implies that 
$\mathcal{E}(y,z)=3y+z-2(b+c)$, so the original assumption that $x+\mathcal{E}(y,z) \geqslant L$ becomes 
$x+3y+z \leqslant a+3b+3c$ which implies what we wanted to show as $z \leqslant c$.  
\item Condition \eqref{t2}: $2\epsilon^{EF} \leqslant E \leqslant F+a$. If $\epsilon^{EF}=0$ the left-hand side is immediate.  
Otherwise it is equivalent to $x+4y \geqslant a+3b+3c$.  This follows from $x+3y+z \leqslant a+3b+3c$ unless $y<c$.  
But this means we must have $z>b$ to obtain $y+z \geqslant b+c$.  Since $y \geqslant z$ this would give $b \leqslant c-2$
which is not allowed.  The right hand side follows directly from $x \geqslant y$. 
\item Condition \eqref{t3}: $4E+5F-3\epsilon^{EF} \leqslant a+3b+3c$.  This reduces to $4x+5y+3\epsilon^{(L-x)(b+c-y)} 
\geqslant 3a+6b+6c$.  If $\epsilon^{(L-x)(b+c-y)}=0$ we must have $x+2y \geqslant a+2b+2c$. Adding three times this 
inequality to the inequality $x-y \geqslant 0$ gives us what we desire.  If $\epsilon^{(L-x)(b+c-y)}>0$ then 
$\epsilon^{(L-x)(b+c-y)}=a+2b+2c-x-2y$ and the inequality $4x+5y+3\epsilon^{(L-x)(b+c-y)} \geqslant 3a+6b+6c$ 
reduces to $x-y \geq 0$.
\end{itemize}
Now we know that $T^{(L-x)(b+c-y)}$ is a valid tail. Denote $\Psi(L-x,b+c-y)=(i,j)$. In order to show that 
$\mu \in S(T^{(L-x)(b+c-y)})$ we need only verify that $j \leqslant z$.  That is to say $b+c-y+\epsilon^{(L-x)(b+c-y)} \leqslant z$.
If $\epsilon^{(L-x)(b+c-y)}=0$ this follows from the original assumption that $y+z \geqslant b+c$. Otherwise it reduces to 
$a+3b+3c \leqslant x+3y+z$.  But this is equivalent to the original assumption that $x+\mathcal{E}(y,z) \geqslant L$ 
since $y+z \geqslant b+c$ implies $\epsilon_{yz}=y+z-(b+c)$.}
\end{enumerate}
This concludes the proof of the first statement.

Now we prove the second statement.  Suppose $\mu=(x,y,z) \in S(P_{ij})$ for some pseudohead $P_{ij}$.  We must show 
that $z \geqslant \min(b+c-x,\lceil \frac{y-a}{2}\rceil)$.  We use two cases:
\begin{enumerate}
\item $\mu$ is in the first union in~\eqref{equation.SP union}. We show $z \geqslant \lceil \frac{y-a}{2}\rceil$. We have 
$y=i$ and $j=z$ so this becomes $j \geqslant \lceil \frac{i-a}{2}\rceil$.  But the latter is equivalent to $2j \geqslant i-a$ 
which is equivalent to condition \eqref{p4}.
\item $\mu$ is in the second or third union in~\eqref{equation.SP union}. We show $z \geqslant b+c-x$. First, if 
$\epsilon_{ij}>0$ then $i+j>b+c$ directly implies $j>b+c-x$ so that $z>b+c-x$. Now we assume $\epsilon_{ij}=0$. 
Since $x=L-\mathcal{E}(i,j)$ and $z\geqslant j$ it would be enough to show $j \geqslant b+c-(L-(i-j))$ which reduces 
to $j\geqslant -a+i-j$ but this follows from condition \eqref{p4}.
\end{enumerate}
\end{proof}

If $H_k^\ell \in H^+$, we define the \defn{appendage} associated to $H_k^\ell$ to be
\[
	A(H_k^\ell)=\{(k,\ell,z)\mid z < \min(b+c-k,\lceil \frac{\ell-a}{2}\rceil)\}.
\]

\begin{theorem} \label{pos}
Let $\mu \in \Lambda^+$. Then there exists unique  $H_k^\ell \in H^+$ such that $\mu \in A(H_k^\ell)$.  
Conversely, if $\mu \in A(H_k^\ell)$ for some positive head $H_k^\ell$, then $\mu \in \Lambda^+$.
\end{theorem}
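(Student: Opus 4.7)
The plan is to unfold definitions directly. The appendage $A(H_k^\ell)$ is by definition the set of all partitions whose first two parts are exactly $(k,\ell)$, so uniqueness is automatic: given $\mu = (x,y,z) \in \Lambda^+$, the only conceivable positive head with $\mu$ in its appendage is $H_x^y$. The whole existence half of the forward direction then reduces to checking that this candidate is actually in $H^+$ and that $\mu$ really lies in its appendage.

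For the check that $H_x^y \in H^+$, I would verify $a < y \leq x < b+c$ as follows. The relation $y \leq x$ is built into $\mu$ being a partition. The strict inequalities $x < b+c$ and $y > a$ both fall out of the positivity hypothesis $0 \leq z < \min(b+c-x, \lceil\tfrac{y-a}{2}\rceil)$: each of $b+c-x$ and $\lceil\tfrac{y-a}{2}\rceil$ must be strictly positive, and the latter forces $y \geq a+1$. The remaining requirement that $\mu \in A(H_x^y)$, namely $z < \min(b+c-x, \lceil\tfrac{y-a}{2}\rceil)$, is literally the definition of positivity.

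For the converse, suppose $\mu = (k,\ell,z) \in A(H_k^\ell)$ with $H_k^\ell \in H^+$. The appendage inequality $z < \min(b+c-k, \lceil\tfrac{\ell-a}{2}\rceil)$ is exactly the defining condition for $\Lambda^+$, so the only thing to verify is that $(k,\ell,z)$ is a genuine subpartition of $\lambda(a,b,c)$. Here $z \leq \ell \leq k$ since $z < \lceil\tfrac{\ell-a}{2}\rceil \leq \ell$ (using $\ell > a \geq 0$) and $\ell \leq k$ by the definition of $H^+$; the bounds $k \leq a+b+c$ and $\ell \leq b+c$ follow from $k < b+c$. The only nontrivial step is $z \leq c$, which follows from
\[
    z < \lceil \tfrac{\ell - a}{2}\rceil \leq \lceil \tfrac{(b+c-1)-a}{2}\rceil \leq \lceil \tfrac{c}{2}\rceil \leq c,
\]
where the middle inequality uses $\ell \leq k \leq b+c-1$ and the next uses the standing hypothesis $a+1 \geq b$. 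I do not anticipate a real obstacle: the content of the theorem is essentially that $H^+$ and $A(-)$ were set up precisely to make the decomposition of $\Lambda^+$ tautological, so the proof is bookkeeping. The one point that requires a moment of care is tracking which defining inequalities are strict versus weak, and invoking $a+1 \geq b$ at the end to bound $z$ by $c$.
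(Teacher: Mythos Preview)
Your proof is correct and follows the same line as the paper's: in both, the forward direction identifies $H_x^y$ as the only candidate and verifies $a<y\leqslant x<b+c$ from the positivity inequality, while the converse reads off the positivity condition directly from the definition of the appendage. You are in fact more careful than the paper on the converse, explicitly checking that $(k,\ell,z)$ lies in $\Lambda$ (in particular bounding $z\leqslant c$ via $a+1\geqslant b$), a point the paper leaves implicit.
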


\begin{proof}
Let $\mu=(x,y,z) \in \Lambda^+$.  Note that this correspond to Case 2 in Section~\ref{section.combinatorics}.
Then it is immediate that $\mu$ could only belong to the appendage $A(H_x^y)$.  
Since $z < \min(b+c-x,\lceil \frac{y-a}{2}\rceil)$ in particular $0 < \min(b+c-x,\lceil \frac{y-a}{2}\rceil)$.  This implies both 
$x <b+c$ and $y>a$ so (as $y \leqslant x$) $H_x^y \in H^+$.  Since $\mu$ is positive $z < \min(b+c-x,\lceil \frac{y-a}{2}\rceil)$,
so $\mu \in A(H_x^y)$.  

Now if $\mu=(x,y,z) \in A(H_k^\ell)$ for some head, then $x=k$ and $y=\ell$ and so the inequality 
$z < \min(b+c-x,\lceil \frac{y-a}{2}\rceil)$ is clearly satisfied implying that $\mu \in \Lambda^+$.
\end{proof}

\subsection{Chains}
\label{section.chains}
Suppose $T^{EF} \in T$.  Set $(i,j)=\Psi(E,F)$.  If $P_{ij} \in P^+$ set $(k,\ell)=\Theta(i,j)$.  We define the \defn{chain} of 
$T^{EF}$ to be 
\begin{equation}
	C(T^{EF})=\begin{cases}
	S(P_{ij}) & \text{if $P_{ij} \in P^-$,}\\
	S(P_{ij}) \cup A(H_k^\ell) & \text{if $P_{ij} \in P^+$.}
	\end{cases}
\end{equation} 

Our fundamental result concerning chains is the following.
\begin{theorem}
\label{theorem.chain decomp}
$\Lambda$ is the disjoint union:
\[
	\Lambda= \bigcup_{T^{EF} \in T} C(T^{EF}).
\]
Moreover, for each integer $m \in R^{EF}=[E+F,A-2E-3F+\max(\delta^{EF},\epsilon^{EF})]$ there is precisely one element 
$\mu \in C(T^{EF})$ with area $\mathsf{area}(\mu)=m$.
\end{theorem}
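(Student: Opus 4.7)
The plan is to bootstrap almost everything from Theorems \ref{neg} and \ref{pos} together with the bijections $\Psi$ and $\Theta$, reserving a direct computation only for checking that the areas in each chain run through the asserted interval without gaps. For the disjointness and covering claims, I would first observe that $\Lambda = \Lambda^- \sqcup \Lambda^+$ by definition, and that strings $S(P_{ij})$ lie entirely in $\Lambda^-$ while appendages $A(H_k^\ell)$ lie entirely in $\Lambda^+$ (the former because Theorem \ref{neg} says partitions in strings are negative, the latter by the defining inequality of the appendage). Given $\mu \in \Lambda^-$, Theorem \ref{neg} produces a unique pseudohead $P_{ij}$ with $\mu \in S(P_{ij})$, and applying $\Psi^{-1}$ yields the unique tail $T^{EF}$ with $\mu \in C(T^{EF})$. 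Given $\mu \in \Lambda^+$, Theorem \ref{pos} yields a unique positive head $H_k^\ell$ with $\mu \in A(H_k^\ell)$, and then $\Psi^{-1} \circ \Theta^{-1}$ (restricted to $H^+$) identifies the unique $T^{EF}$ whose associated pseudohead lies in $P^+$ and whose chain contains $\mu$.

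For the area statement, I would compute the areas along the three pieces in \eqref{equation.SP union} explicitly. Using $p=L-E$, $q=b+c-F$, and $(i,j)=\Psi(E,F)$, the partitions $(x,i,j)$ for $i\leqslant x<p$ contribute areas $A-2i-j,\,A-2i-j-1,\ldots,A-p-i-j+1$; the partitions $(p,y,j)$ for $i\leqslant y<q$ contribute $A-p-i-j,\ldots,A-p-q-j+1$; and $(p,q,z)$ for $j\leqslant z\leqslant c$ contributes $A-p-q-j,\ldots,A-p-q-c$. These three blocks fit together into the consecutive interval $[A-p-q-c,\,A-2i-j]$. A short substitution using $\Psi$ shows $A-p-q-c = E+F$ and $A-2i-j = A-2E-3F+\epsilon^{EF}$, which is precisely $R^{EF}$ when $P_{ij}\in P^-$.

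When $P_{ij}\in P^+$, setting $(k,\ell)=\Theta(i,j)=(i+j-\delta_{ij},\,i+\epsilon_{ij})$, the appendage $A(H_k^\ell)$ consists of $(k,\ell,z)$ for $0\leqslant z<\min(b+c-k,\,\lceil(\ell-a)/2\rceil)$, contributing areas $A-k-\ell,\,A-k-\ell-1,\ldots$. The top of this run is $A-k-\ell = A-2i-j+\delta_{ij}-\epsilon_{ij}$, which matches the upper endpoint of $R^{EF}=[E+F,\,A-2E-3F+\max(\delta^{EF},\epsilon^{EF})]$. The key identity I need to verify is
\[
\min\bigl(b+c-k,\;\lceil(\ell-a)/2\rceil\bigr) \;=\; \delta_{ij}-\epsilon_{ij},
\]
which, together with $A-k-\ell = A-2i-j+\delta_{ij}-\epsilon_{ij}$, shows the appendage contributes exactly the interval $[A-2i-j+1,\,A-2i-j+\delta_{ij}-\epsilon_{ij}]$, gluing onto the string with no overlap and no gap. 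The verification of this identity splits on whether $\epsilon_{ij}=0$ (then $\lceil(\ell-a)/2\rceil=\delta_{ij}$ is the minimum) or $\epsilon_{ij}>0$ (then $i+j>b+c$ forces $b+c-k=\delta_{ij}-\epsilon_{ij}$ to be the minimum, and $P_{ij}\in P^+$ ensures this is nonnegative).

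The main obstacle I anticipate is not any individual step but the bookkeeping: confirming the three substring boundaries are transversed consecutively in area, and then that the appendage glues on exactly one unit below the top of the string. The decisive technical point is the min-identity above, and care is needed because the expressions $\epsilon_{ij},\,\delta_{ij},\,\epsilon^{EF},\,\delta^{EF}$ involve ceilings and maxima that must be tracked through the $\Psi$ and $\Theta$ translations.
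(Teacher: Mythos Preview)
Your proposal is correct and follows essentially the same route as the paper: the disjoint-union claim is reduced to Theorems~\ref{neg} and~\ref{pos} via the bijections $\Psi$ and $\Theta$, and the area statement is handled by computing $\mathsf{area}(T^{EF})=E+F$, $\mathsf{area}(P_{ij})=A-2E-3F+\epsilon^{EF}$, $\mathsf{area}(H_k^\ell)=A-2E-3F+\delta^{EF}$, and the key identity $\min(b+c-k,\lceil(\ell-a)/2\rceil)=\delta_{ij}-\epsilon_{ij}$. The only difference is that you spell out the three area sub-intervals along the string in~\eqref{equation.SP union} explicitly, whereas the paper simply asserts that the string has consecutive areas ``by construction''; your added detail is harmless and arguably clearer.
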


\begin{proof}
The first statement is immediate by combining Theorems~\ref{neg} and~\ref{pos}.  

Now fix $T^{EF}$ and set $(i,j)=\Psi(E,F)$.  If $P_{ij} \in P^-$, then by definition $C(T^{EF})=S(P_{ij})$. By construction, 
this string has one partition of area $m$ for each $m \in [\mathsf{area}(T^{EF}),\mathsf{area}(P_{ij})]$. But 
$\mathsf{area}(T^{EF})=E+F$.  Moreover, $\mathsf{area}(P_{ij})=A-2i-j=A-2E-3F + \epsilon^{EF}$ and since $P_{ij}\in P^-$ 
implies that $\epsilon_{ij}=\max(\delta_{ij},\epsilon_{ij})=\max(\delta^{EF},\epsilon^{EF})$ this means 
$\mathsf{area}(P_{ij})=A-2E-3F+\max(\delta^{EF},\epsilon^{EF})$.

Now suppose $P_{ij} \in P^+$.  Then $C(T^{EF})=S(P_{ij}) \cup A(H_k^\ell)$ has one partition of area $m$ for each 
$m \in [\mathsf{area}(T^{EF}),\mathsf{area}(P_{ij})]$ and one partition of area $n$ for each 
$n \in [\mathsf{area}(H_k^\ell)-\min(b+c-k,\lceil \frac{\ell-a}{2}\rceil)+1,\mathsf{area}(H_k^\ell)]$.  Again, $\mathsf{area}(T^{EF})=E+F$ 
and $\mathsf{area}(P_{ij})=A-2E-3F + \epsilon^{EF}$ so it suffices to prove the two equations
\begin{align*}
	&\mathsf{area}(H_k^\ell)-\min(b+c-k,\lceil \frac{\ell-a}{2}\rceil)+1=A-2E-3F + \epsilon^{EF}+1,\\
	&\mathsf{area}(H_k^\ell)=A-2E-3F+\max(\delta^{EF},\epsilon^{EF})=A-2E-3F+\delta^{EF}.
\end{align*}
However, we have
\begin{multline*}
	k+\ell=(i+j-\delta_{ij})+(i+\epsilon_{ij})=
	2i+j-(\delta_{ij}-\epsilon_{ij})\\
	=2(E+F-\epsilon^{EF})+(F+\epsilon^{EF})-(\delta^{EF}-\epsilon^{EF})=2E+3F-\delta^{EF},
\end{multline*}
so that $\mathsf{area}(H_k^\ell)=A-(k+1)=A-2E-3F+\delta^{EF}$ as desired. On the other hand
\begin{multline*}
	\min(b+c-k,\lceil \frac{\ell-a}{2}\rceil)=
	\min(b+c-(i+j-\delta_{ij},\lceil \frac{i+\epsilon_{ij}-a}{2}\rceil)\\
	=\min(b+c-(i+j)+\delta_{ij},\delta_{ij})=
	\delta_{ij} + \min(b+c-(i+j),0)\\ 
	=\delta_{ij}-\epsilon_{ij}=\delta^{EF}-\epsilon^{EF}.
\end{multline*}
Hence we have
\begin{multline*}
	\mathsf{area}(H_k^\ell)-\min(b+c-k,\lceil \frac{\ell-a}{2}\rceil)=A-2E-3F+\delta^{EF}-(\delta^{EF}-\epsilon^{EF})\\
	=A-2E-3F + \epsilon^{EF},
\end{multline*}
which gives the other equation we wanted after adding $1$ to both sides.
\end{proof}

Since $\Psi$ and $\Theta$ fix the area range, we can conclude the following statement.

\begin{corollary}
\label{corollary.cover}
Let $X$ represent the set of heads, the set of pseudoheads, or the set of tails. Then $\Lambda$ is the disjoint union 
of all chains which contain an element of $X$.  
Moreover, for $x \in X$ and each $m$ in the area range of $x$, there is precisely one element $\mu$ of area $m$ in 
the same chain as $x$.
\end{corollary}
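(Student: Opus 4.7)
The plan is to derive the corollary from the already-established Theorem \ref{theorem.chain decomp} by transporting the chain decomposition along the area-preserving bijections $\Psi : T \to P$ and $\Theta : P \to H$ constructed in Sections \ref{section.pseudoheads} and \ref{section.heads}. The case $X=T$ is exactly Theorem \ref{theorem.chain decomp}, since every chain $C(T^{EF})$ contains $T^{EF}=(p,q,c)$ as the $z=c$ entry of the third union in \eqref{equation.SP union}, and the chains partition $\Lambda$.

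To handle $X=P$, I would show that each chain $C(T^{EF})$ contains a unique pseudohead, namely $P_{ij} = \Psi(E,F)$. Existence is immediate from \eqref{equation.SP union}, where $P_{ij}=(i,i,j)$ occurs as the $x=i$ entry of the first union. Uniqueness is a short inspection: any candidate $(i',i',j')$ coming from the first or third union forces $i'=i$ and $j'=j$ after comparing coordinates, while the second union contributes no element of the form $(m,m,*)$. Since $\Psi$ is a bijection, relabeling each chain by its pseudohead exhausts $P$ and yields the desired disjoint decomposition. The area-range statement then transfers because the lemma of Section \ref{section.pseudoheads} gives $R^{EF} = R_{ij}$ whenever $\Psi(E,F)=(i,j)$, and Theorem \ref{theorem.chain decomp} already supplies the one-element-per-area property over $R^{EF}$.

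For $X=H$, I would compose with $\Theta$. On $P^-$ the map $\Theta$ is the identity and $H^-=P^-$, so those chains already carry unique labels in $H$. For $P_{ij}\in P^+$, the chain $S(P_{ij}) \cup A(H_k^\ell)$ contains $H_k^\ell=(k,\ell,0)$ as the $z=0$ element of the appendage, and this is the unique head in that chain: a candidate head in $H^-$ would be a negative partition and by Theorem \ref{neg} would have to equal $P_{ij}\in P^+$, a contradiction, while a distinct head in $H^+$ would lie simultaneously in two appendages, contradicting Theorem \ref{pos}. Bijectivity of $\Theta$ on $P^+\to H^+$ together with $H^-\cap H^+ = \emptyset$ (verified in Section \ref{section.heads}) ensures the chains are parameterized by $H$ without repetition. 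The area-range compatibility is exactly the identity $A-k-\ell = A-2i-j+\delta_{ij}-\epsilon_{ij}$ computed in the proof of Theorem \ref{theorem.chain decomp}, so that $R_k^\ell = R_{ij}$.

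The only real bookkeeping obstacle is verifying that each chain contains exactly one element of $X$ in the head and pseudohead cases; once the explicit form of chains in \eqref{equation.SP union} is parsed and combined with the uniqueness clauses of Theorems \ref{neg} and \ref{pos}, the statement reduces to a formal transport along bijections whose area-preserving property has already been proved. No further recursion or computation is needed beyond these citations.
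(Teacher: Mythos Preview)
Your proposal is correct and matches the paper's approach: transport Theorem~\ref{theorem.chain decomp} along the area-range-preserving bijections $\Psi$ and $\Theta$ (the paper's proof is in fact the single sentence preceding the corollary). The uniqueness bookkeeping you add is not required by the statement---since $\Psi$ and $\Theta$ are bijections, each chain automatically carries exactly one label from $P$ and from $H$, and that label lies in the chain with the correct area range---so the slightly under-justified third-union claim (which would need Theorem~\ref{neg} rather than a bare coordinate comparison when $p=q$) can simply be omitted.
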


\subsection{Area preserving bijection between head and quasiheads}
\label{section.quasi}

We write $Q=Q_{\leqslant}^- \cup Q_>^- \cup Q^+$, where 
\begin{align*}
	Q_{\leqslant}^- &= \{Q\mms{st} \in Q\mid s+t \leqslant b+c, s \leqslant a\},\\
	Q_{>}^- &= \{Q\mms{st} \in Q\mid s+t > b+c\} \cup \{Q\mms{st} \in Q\mid s+t = b+c, s>a\},\\
	Q^+ &= \{Q\mms{st} \in Q\mid s+t < b+c, s > a\},
\end{align*}
and $H=P_{\leqslant}^- \cup P_>^- \cup H^+$, where
\[
	P_{\leqslant}^-=\{P_{ij} \in P^-\mid i+j \leqslant b+c\} \qquad \text{and} \qquad
	P_{>}^-=\{P_{ij} \in P^-\mid i+j > b+c\}.
\]

\begin{proposition}
\label{proposition:H-Q}
There is an area range preserving bijection from $H$ to $Q$.
\end{proposition}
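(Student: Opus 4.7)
The plan is to construct an explicit area-range-preserving bijection $\Xi \colon H \to Q$ matching the three-part decompositions piece by piece. On the first piece $P^-_\leqslant \to Q^-_\leqslant$, both sets are characterized by the same quadruple of inequalities (namely $0 \leqslant j \leqslant c$, $j \leqslant i \leqslant b+c$, $i+j \leqslant b+c$, $i \leqslant a$), once one checks that the remaining pseudohead conditions \eqref{p3} and \eqref{p4} and the remaining quasihead condition \eqref{q3} are automatic. Condition \eqref{p3} follows from $4i+j = i + 3(i+j) - 2j \leqslant a + 3(b+c)$, condition \eqref{p4} from $i - 2j \leqslant i \leqslant a$, and \eqref{q3} from $2i+2j \leqslant (b+c) + a + j$, together with the observation that $m_{cj}=1$ forces $c-j$ to be odd and hence $j \leqslant c-1$. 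Thus I would take $\Xi$ to be the identity on indices, and the area range $[i, A-2i-j]$ is manifestly preserved.

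On the second piece $P^-_> \to Q^-_>$, I would set $\Xi(P_{ij}) = Q_{s,t}$ where $(s,t) = (i+k, j-2k)$ for the smallest nonnegative integer $k$ for which the image satisfies \eqref{q3}. Every such pair has $2s+t = 2i+j$, and provided $k \leqslant \epsilon_{ij}$, one checks $s + \epsilon_{st} = i + \epsilon_{ij}$, preserving the area range. The existence of $k$ in $[0, \min(\epsilon_{ij}, \lfloor j/2 \rfloor)]$ follows from \eqref{p3}, \eqref{p4}, and the defining condition $j \geqslant b+c-a$ of $P^-_>$, combined with the hypothesis $b \leqslant a+1$ (which in fact forces $P^-_>$ to be empty when $b = a+1$). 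The image lies in $Q^-_>$: either $s+t > b+c$ when $k < \epsilon_{ij}$, or $s+t = b+c$ with $s = i + \epsilon_{ij} > a$ when $k = \epsilon_{ij}$, the latter inequality following from $i > b+c-j \geqslant a$.

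On the third piece $H^+ \to Q^+$, I would use $(k, \ell, 0) \mapsto Q_{\ell,\, k-\ell}$, i.e.\ $s = \ell$ and $t = k-\ell$. The inequalities $a < \ell \leqslant k < b+c$ together with $b \leqslant a+1$ yield $0 \leqslant t = k-\ell \leqslant c$ via $b + c - a \leqslant c+1$, $t \leqslant s$ via $k \leqslant b+c-1 \leqslant 2a+1 < 2\ell$, and \eqref{q3} via $2k \leqslant 2(b+c)-2 \leqslant a+b+2c - m_{ct}$. The image satisfies $s = \ell > a$ and $s + t = k < b + c$, placing it in $Q^+$, and the area ranges $[\ell, A-k-\ell] = [s, A-2s-t]$ match directly.

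Injectivity on each piece is immediate from the explicit formulas, and the three image pieces are disjoint subsets of $Q$ by construction. Surjectivity follows from a counting argument: by Corollary~\ref{corollary.cover} the $H$-chains partition $\Lambda$, giving $\sum_{h \in H} |R_h| = |\Lambda|$, and the same total must hold for $Q$, forcing $|H| = |Q|$ once area ranges are matched. The main obstacle will be the second case: one must verify both the existence of a valid $k$ in the prescribed interval and the subtle boundary inequality $s > a$ when $s+t = b+c$, which requires carefully combining \eqref{p3}, \eqref{p4}, and the condition $j \geqslant b+c-a$ characterizing $P^-_>$, together with parity tracking via $m_{cj}$.
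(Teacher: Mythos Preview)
Your treatment of the first and third pieces is correct and matches the paper's approach exactly (identity on $P^-_{\leqslant}$, and $(k,\ell)\mapsto(\ell,k-\ell)$ on $H^+$). The problem is the second piece.

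Your map on $P^-_>$ is \emph{not} injective. Take $(a,b,c)=(6,4,4)$, which satisfies the standing hypotheses. Then both $P_{64}$ and $P_{72}$ lie in $P^-_>$ (one checks $\epsilon_{64}=2$, $\delta_{64}=1$ and $\epsilon_{72}=1$, $\delta_{72}=1$). For $(6,4)$ one has $m_{c4}=0$ and $2i+2j=20>18=a+b+2c$, so the smallest admissible $k$ is $1$, giving image $(7,2)$. For $(7,2)$ one has $m_{c2}=0$ and $2i+2j=18\leqslant 18$, so $k=0$ already works, giving image $(7,2)$ as well. Thus two distinct pseudoheads collide. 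The claim ``injectivity on each piece is immediate from the explicit formulas'' is therefore false here: the value of your $k$ depends on $i+j$, not just on the invariant $2i+j$, and two points on the same line $2i+j=\mathrm{const}$ can land on the same image.

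The paper avoids this by using a shift that depends only on the invariant $2i+j$: it sets $\omega_{ij}=\max\bigl(0,\lceil\tfrac{2i+j-L}{2}\rceil\bigr)$ and $(s,t)=(i+\omega_{ij},\,j-2\omega_{ij})$. Since $\omega_{ij}$ is a function of $2i+j$ alone, the map is visibly injective along each line $2i+j=\mathrm{const}$, and one can write down the explicit inverse $\Phi^{-1}(s,t)=(s-\omega_{st},\,t+2\omega_{st})$. In the example above the paper sends $(6,4)\mapsto(7,2)$ and $(7,2)\mapsto(8,0)$, hitting the two distinct quasiheads in $Q^-_>$ with area range $[8,10]$.

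A second issue is your surjectivity argument. You appeal to a counting identity $\sum_{q\in Q}|R_q|=|\Lambda|$, but nothing prior to this proposition establishes that the quasihead ranges cover $\Lambda$; that is precisely what the proposition is for. The paper instead proves surjectivity directly by exhibiting the inverse on each of the three pieces and checking that it lands back in the correct subset of $P^-$ or $H^+$.
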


\begin{proof}
We prove the proposition in three parts. First we show that the identity is an area range preserving bijection from 
$P_{\leqslant}^-$ to $Q_{\leqslant}^-$. Then we define an area range preserving bijection from $P_>^-$ to $Q_>^-$. 
Finally we define an area range preserving bijection from $H^+$ to $Q^+$.

\begin{enumerate}
\item The set $P_{\leqslant}^-$ is the set of triples $(i,i,j)$ obeying the conditions \eqref{p1}-\eqref{p4} as well as the 
inequalities $\delta_{ij} \leqslant \epsilon_{ij}$ and $i+j \leqslant b+c$. In light of~\eqref{p2}, $\epsilon_{ij}=0$ and 
$\delta_{ij} \leqslant \epsilon_{ij}$ simply becomes $i \leqslant a$.  But this in turn implies \eqref{p4}.  Moreover, 
adding $i \leqslant a$ to $3i+3j \leqslant 3b+3c$ gives condition \eqref{p3}.  Thus $P_{\leqslant}^-$ is the set of triples 
$(i,i,j)$ satisfying the four conditions in~\eqref{p1} and~\eqref{p2} as well as $i+j \leqslant b+c$ and $i \leqslant a$. 
On the other hand,  $Q_{\leqslant}^-$ is the set of triples $(s,s,t)$ satisfying the five conditions~\eqref{q1}-\eqref{q3} 
as well as $s+t \leqslant b+c$ and $s \leqslant a$. Since conditions \eqref{p1}-\eqref{p2} are equivalent to
\eqref{q1}-\eqref{q2}, if we can show that condition \eqref{q3} is implied by the other four conditions, it follows that 
$Q_{\leqslant}^-=P_{\leqslant}^-$.  Indeed, adding the three inequalities $s+t \leqslant b+c$, $s \leqslant a$, and 
$t \leqslant c$ gives $2s+2t \leqslant a+b+2c$.  This is strict unless we have equality in all of the three previous conditions. 
In particular, this would mean $t=c$ so that $m_{ct}=0$. Thus in any case $2s+2t \leqslant a+b+2c-m_{ct}$.  Therefore,
$Q_{\leqslant}^-=P_{\leqslant}^-$.  Since for $P_{ij} \in P_{\leqslant}^-$ $\max(0,\delta_{ij}-\epsilon_{ij})=0$, we have 
$R_{ij}=R\mms{st}$ if $i=s$, $j=t$, so that the identity is an area range preserving bijection between the two sets.

\item Let $\omega_{ij}=\max(0,\lceil\frac{2i+j-L}{2}\rceil)$ and define maps $\Phi$ and $\Phi^{-1}$ by
\[
	 \Phi(i,j)=(i+\omega_{ij},j-2\omega_{ij}) \qquad \text{and} \qquad  \Phi^{-1}(s,t)=(s-\omega_{st}, t+2\omega_{st}).
\]
Now if $\Phi(i,j)=(s,t)$ or $\Phi^{-1}(s,t)=(i,j)$, it is clear that $\omega_{ij}=\omega_{st}$.  From this it follows that 
$\Phi^{-1} \circ \Phi$ and $\Phi \circ \Phi^{-1}$ are the identity on $\mathbb{Z}^2$.
We claim that $\Phi$ induces an area range preserving bijection from $P_>^-$ to $Q_>^-$ via the rule that if 
$\Phi(i,j)=(s,t)$, then $(i,i,j) \mapsto (s,s,t)$ with the inverse  induced by $\Phi^{-1}$  via the rule that if $\Phi^{-1}(s,t)=(i,j)$,
then $(s,s,t) \mapsto (i,i,j)$.  

First suppose $P_{ij} \in P_>^-$, so that conditions \eqref{p1}-\eqref{p4} are satisfied alongside 
$\delta_{ij}-\epsilon_{ij} \leqslant 0$ and $i+j>b+c$.  We need to check that if $\Phi(i,j)=(s,t)$, i.e., $s=i+\omega_{ij}$ 
and $t=j-2\omega_{ij}$, then $(s,t)$ satisfies conditions \eqref{q1}-\eqref{q3} and that $s+t \geqslant b+c$ and 
$(s+t=b+c) \implies s>a$. 

\begin{itemize}
 \item Condition \eqref{q1}: $0 \leqslant t \leqslant c$.  This translates to $0 \leqslant j-2\omega_{ij} \leqslant c$. The right hand 
 side follows from $j \leqslant c$ (see the right hand side of \eqref{p1}).  If $\omega_{ij}=0$, the left hand side follows 
 from the left hand side of \eqref{p1}.  Otherwise it says that $j \geqslant 2\lceil\frac{2i+j-L}{2}\rceil$.  Now 
 $\delta_{ij}-\epsilon_{ij}\leqslant 0$ is equivalent to $i - \epsilon_{ij} \leqslant a$, but $i+j > b+c$ so $\epsilon_{ij}>0$ 
 and this  becomes $i-(i+j-(b+c)) \leqslant a$ or $-j \leqslant a-b-c$. Adding this to \eqref{p3} yields $4i \leqslant 2a+2b+2c$ 
 or $2i \leqslant L$.  This is enough to prove  $j \geqslant 2\lceil\frac{2i+j-L}{2}\rceil$ unless $2i=L$ and $j$ is odd.  But 
 then $4i+j$ is odd and $a+3b+3c$ is even so we have strictness in \eqref{p3}, that is, $4i+j < a+3b+3c$.  Hence adding this to 
 $-j \leqslant a-b-c$ results in $4i < 2a+2b+2c$ which contradicts $2i=L$.
 
 \item Condition \eqref{q2}: $t \leqslant s \leqslant b+c$. This translates to $j-2\omega_{ij} \leqslant i+\omega_{ij} \leqslant b+c$.
 The left hand side follows from the left hand side of \eqref{p2}.  If $\omega_{ij}=0$, then the right hand side comes from the 
 right hand side \eqref{p2}.  Otherwise the right hand side says $i+\lceil\frac{2i+j-L}{2}\rceil \leqslant b+c$ which follows from 
 $i+\frac{2i+j-L}{2} \leqslant b+c$ (which is equivalent to \eqref{p3}) since $b+c$ is an integer.
 
 \item Condition \eqref{q3}: $2s+2t \leqslant a+b+2c-m_{ct}$.  This translates to $2i+2j-2\omega_{ij}\leqslant a+b+2c-m_{cj}$ 
 (note that $m_{c(j-2\omega_{ij})}=m_{cj}$). If $\omega_{ij}=0$ then $2i+j \leqslant L$ so it suffices to show 
 $j \leqslant c-m_{cj}$ which is evident by the definition of $m_{cj}$ and $j \leqslant c$. On the other hand if 
 $\omega_{ij}>0$, then proving $2i+2j \leqslant (2i+j-L)+ a+b+2c-m_{cj}$ suffices since $(2i+j-L) \leqslant 2\omega_{ij}$.  
 But the former again reduces to  $j \leqslant c-m_{cj}$.
 
 \item $s+t \geqslant b+c$.  This says $i+j-\omega_{ij} \geqslant b+c$.  This is clear from the definition of $P_>^-$ if 
 $\omega_{ij}=0$ so suppose $\omega_{ij}>0$.  Now, as in the first bullet point, $\delta_{ij}-\epsilon_{ij} \leqslant 0$ 
 and $i+j>b+c$ imply $j \geqslant b+c-a$. The latter is equivalent to $2i+2j-2\frac{2i+j-L}{2} \geqslant 2b+2c$, or, 
 dividing by $2$, $i+j-\frac{2i+j-L}{2} \geqslant b+c$.  But since $b+c$ is an integer, this implies 
 $i+j-\lceil\frac{2i+j-L}{2}\rceil \geqslant b+c$ as desired.

\item $(s+t=b+c) \implies s>a$.  This translates to, if $i+j-\omega_{ij}=b+c$, then $i+\omega_{ij}>a$.  If $\omega_{ij}=0$,
the hypothesis would clearly contradict the assumption $i+j>b+c$.  Thus we may assume $\omega_{ij}>0$ which 
means $2i+j > L$. Adding this to $-i-j \geqslant -b-c - \omega_{ij}$  gives $i > a-\omega_{ij}$ as desired.
\end{itemize}

Now suppose $Q\mms{st} \in Q_>^-$, so that conditions \eqref{q1}-\eqref{q3} are satisfied alongside  $s+t \geqslant b+c$ 
and $(s+t=b+c) \implies s>a$.   We need to check that if $\Phi^{-1}(s,t)=(i,j)$, that is, $i=s-\omega_{st}$ and 
$j=t+2\omega_{st}$, then $(i,j)$ satisfies conditions \eqref{p1}-\eqref{p3} as well as $\delta_{ij}-\epsilon_{ij} \leqslant 0$ 
and $i+j>b+c$. (We do not need to check condition \eqref{p4} as adding $-3i-3j<-3b-3c$ to \eqref{p3} yields $i-2j<a$.) 

\begin{itemize}
 \item Condition \eqref{p1}: $0 \leqslant j \leqslant c$.  This translates to $0 \leqslant t+2\omega_{st} \leqslant c$.  
 The left hand side follows from $0 \leqslant t$ (which is the left hand side of \eqref{q1}).  Now, if either $t-c$ or $a+b$ 
 are odd, then $2s+2t \leqslant a+b+2c-m_{ct}$ implies $2s+2t <a+b+2c$, which is to say $t+2\frac{2s+t-L}{2}<c$ 
 so that $t+2\omega_{st} \leqslant c$. On the other hand if both $t-c$ and $a+b$ are even, then we can only 
 deduce $t+2\frac{2s+t-L}{2}\leqslant c$ from \eqref{q3}, but in this case $\frac{2s+t-L}{2}=\omega_{st}$ so we still get 
 what we want.
 
 \item Condition \eqref{p2}: $j \leqslant i \leqslant b+c$. This translates to $t+2\omega_{st} \leqslant s-\omega_{st} 
 \leqslant b+c$. The right hand side follows from the right hand side of \eqref{q2}.  If $\omega_{st}=0$, then the left 
 hand side comes from the left hand side \eqref{q2}.  Now suppose $\omega_{st}>0$. We need to show that 
 $t+2\omega_{st} \leqslant s-\omega_{st}$.  The inequality we wish to show is equivalent to 
 $2t-2s+6\omega_{st} \leqslant 0$. Since $2\omega_{st}$ can be rewritten as $2s+t-L+m_{Lt}$ this becomes 
 $4s+5t \leqslant 3L-3m_{Lt}$.
 
 First suppose that $s+t>b+c$ and $m_{Lt} \leqslant m_{ct}$. Since twice \eqref{q3} reads $4s+4t \leqslant 2L +2c -2m_{ct}$ 
 it suffices to prove $t \leqslant a+b-c+2m_{ct}-3m_{Lt}$, since the sum of the last two inequalities mentioned gives the 
 one at the end of the last sentence. Since $t \leqslant c$, it suffices to show $a+b \geqslant 2c-2m_{ct}+3m_{Lt}$.  
 Since  $s+t >b+c$, we have $-2s-2t\leqslant -2b-2c-2$ which we can add to \eqref{q3} to get  $a \geqslant b+2+m_{ct}$ 
 or  $a - b \geqslant 2+m_{ct}$. Adding this to $2b \geqslant 2c-2$ yields $a+b \geqslant 2c+ m_{ct}$ which implies 
 $a+b \geqslant 2c-2m_{ct}+3m_{Lt}$ since we are assuming $m_{Lt} \leqslant m_{ct}$.
 
 Now suppose that $s+t>b+c$, but $0=m_{Lt} < m_{ct}=1$. Since $L+c$ must be odd, we have strictness in \eqref{q3}, 
 that is we have $2s+2t < a+b+2c=L+c$.  Thus we have $4s+4t \leqslant 2L +2c -2$  so it suffices to prove 
 $t \leqslant a+b-c-1$ since adding these gives $4s+5t \leqslant 3L-3m_{Lt}$ as desired. Again, $t \leqslant c$ 
 so its enough to show $a+b \geqslant 2c+1$.  Since strictness of \eqref{q3} implies $2s+2t \leqslant a+b+2c-1$,
 we can add this to $-2s-2t\leqslant -2b-2c-2$ to obtain $a \geqslant b+3$. Since $b \geqslant c-1$, this implies 
 $a+b \geqslant 2c+1$ as desired.
 
 Finally suppose that $s+t=b+c$ (and so also $s>a$).  We need to show $4(b+c)+t \leqslant 3L-3m_{Lt}$, or equivalently, 
 $t \leqslant 3a-b-c -3m_{Lt}$. When $s+t=b+c$, condition \eqref{q3} becomes $a \geqslant b+m_{ct}$.  Thus $s>a$
 implies $s \geqslant b+m_{ct}+1$, which in turn means $t \leqslant c-m_{ct}-1$.  In fact $t \leqslant c-2$ because if 
 $t=c-1$ then we would have $m_{ct}=1$ and thus $t \leqslant c-2$.
Now since $a \geqslant b+m_{ct} \geqslant c-1+m_{ct}$ this means $t \leqslant c-2 + (a-b-m_{ct}) + 2(a-c+1-m_{ct})$ 
or that $t \leqslant 3a-b-c-3m_{ct}$ which implies $t \leqslant 3a-b-c -3m_{Lt}$ unless $0=m_{ct}<m_{Lt}=1$. But in the 
latter case $a+b$ must be odd so $a \geqslant b$ implies $a>b$. Thus we have $a \geqslant b+1 \geqslant c$ so that 
$t \leqslant c-2 + (a-b-1) + 2(a-c)$ or $t \leqslant 3a-b-c -3$.

\item Condition \eqref{p3}: $4i+j \leqslant a+3b+3c$. This translates to $4s+t-2\omega_{st} \leqslant a+3b+3c$.  
If $\omega_{st}=0$, then $2s+t \leqslant L$ which we add to two times the right hand side of \eqref{q2} to obtain
$4s+t\leqslant a+3b+3c$ as desired.  Now suppose $\omega_{st}>0$. Then $4s+t-2\omega_{st}=
4s+t-2\lceil \frac{2s+t-L}{2} \rceil \leqslant 4s+t-2\frac{2s+t-L}{2}=2s+L \leqslant L+b+c$, where the last inequality 
comes from the right hand of \eqref{q2}.

\item $\delta_{ij} - \epsilon_{ij} \leqslant 0$.  This reduces to $i-\epsilon_{ij} \leqslant a$ which says that 
$s-\omega_{st}-\max(0,s+t+\omega_{st}-(b+c)) \leqslant a$.  Since $s+t \geqslant b+c$, this is equivalent to 
$-t-2\omega_{st} \leqslant a-b-c$.  If $\omega_{st}=0$ then $2s+t \leqslant L$ and which we may add to 
$-2s-2t\leqslant -2b-2c$ to get $-t \leqslant a-b-c$ as desired.  If $\omega_{st}>0$ then it will suffice to show 
$-t-2\frac{2s+t-L}{2} \leqslant a-b-c$ but this reduces to $-2s-2t \leqslant -2b-2c$.

 \item $i+j>b+c$.  This means $s+t+\omega_{st}>b+c$.  If $\omega_{st}>0$ this follows from $s+t \geqslant b+c$.  
 Now suppose $\omega_{st}=0$. Then $2s+t \leqslant L$ which we may add to $-s-t \leqslant -b-c$ to get $s \leqslant a$.  
 Thus the assumptions $s+t \geqslant b+c$ and $(s+t=b+c) \implies s>a$ reveal that $s+t>b+c$.
\end{itemize}

This proves that $\Phi$ induces a bijection from $P_>^-$ to $Q_>^-$. Moreover if $P_{ij} \in P_>^-$, then 
$\delta_{ij}\leqslant \epsilon_{ij}$ and $i+j > b+c$, so the area range reduces to $R_{ij}=[2i+j-(b+c),A-2i-j]$.  
Now if $\Phi(i,j)=(s,t)$, then since by the above $Q\mms{st} \in Q_>^-$ we have $s+t \geqslant b+c$ so that we 
get $R\mms{st} = [2s+t-(b+c),A-2s-t]$.  Since it is clear that $2i+j=2s+t$ we see that $R_{ij}=R\mms{st}$ and so the 
bijection induced by $\Phi$ preserves the area range.


\item Define maps $\Omega$ and $\Omega^{-1}$ by
\[
	 \Omega(k,\ell)=(\ell,k-\ell) \qquad \text{and} \qquad  \Omega^{-1}(s,t)=(s+t,s).
\]
Since $\Omega$ is an invertible linear transformation (with inverse $\Omega^{-1}$), it is immediate that 
$\Omega^{-1} \circ \Omega$ and $\Omega \circ \Omega^{-1}$ are the identity on $\mathbb{Z}^2$.  We 
claim that $\Omega$ induces an area range preserving bijection from $H^+$ to $Q^+$ via the rule that, if 
$\Omega(k,\ell)=(s,t)$, then $(k,\ell,0) \rightarrow (s,s,t)$ with inverse  induced by $\Omega^{-1}$  via the rule that,
if  $\Omega^{-1}(s,t)=(k,\ell)$ then $(s,s,t) \rightarrow (k,\ell,0)$.

First suppose that $H_k^\ell \in H^+$ so that $a < \ell \leqslant k <b+c$, and that $\Omega(k,\ell)=(s,t)$, that is, 
$s=\ell$ and $t=k-\ell$. We need to check inequalities \eqref{q1}-\eqref{q3} as well as $s+t<b+c$ and $s>a$.

\begin{itemize}
\item Condition \eqref{q1}: $0 \leqslant t \leqslant c$.  This translates to $0 \leqslant k-\ell \leqslant c$.  The left hand 
side is true because $\ell\leqslant k$.  Moreover, since $\ell>a \geqslant b-1$, the inequalities $\ell \geqslant b$ and 
$k < b+c$ give $k-\ell<c$.

\item Condition \eqref{q2}: $t \leqslant s \leqslant b+c$.  This translates to $k-\ell \leqslant \ell \leqslant b+c$.  Now 
$k \leqslant b+c-1 \leqslant 2a+1 <2\ell$ which established the left hand side. On the other hand $\ell \leqslant k <b+c$ 
makes the right hand side obvious.

\item Condition \eqref{q3}: $2s+2t \leqslant a+b+2c-m_{ct}$.  This translates to $2k \leqslant a+b+2c-m_{c(k-\ell)}$.  
But $k \leqslant b+c-1$ and since $a \geqslant b-1$ also $k \leqslant a+c$.  Since adding these gives $2k \leqslant a+b+2c-1$
we are done.

\item $s+t < b+c$.  This translates to $\ell+k-\ell<b+c$, that is, $k <b+c$, as assumed.

\item $s>a$. This says $\ell>a$, as assumed.

\end{itemize}

Now suppose that $Q\mms{st} \in Q^+$ so that the inequalities \eqref{q1}-\eqref{q3} hold and we have $s+t<b+c$ and 
$s>a$. We need to show that if $\Omega^{-1}(s,t)=(k,\ell)$ then $H_k^\ell \in H^+$, that is, $a < \ell \leqslant k <b+c$.  
Since $k=s+t$ and $\ell=s$, this says $a < s \leqslant s+t <b+c$.  The left and right hand inequalities are those 
assumed above.  The middle inequality follows from the left hand side of \eqref{q1}.
\end{enumerate}

This proves that $\Omega$ induces a bijection from $H^+$ to $Q^+$. Moreover if $H_k^\ell \in H^+$, then 
$R_k^\ell=[\ell,A-k-\ell]$.  Now if $\Omega(i,j)=(s,t)$, then since by the above $Q\mms{st} \in Q^+$, we have $s+t < b+c$ 
so $\epsilon_{st}=0$ and $R\mms{st} = [s,A-2s-t]=[\ell,A-2\ell-k+\ell]$.  Thus $R_k^\ell=R\mms{st}$ and so the bijection 
induced by $\Omega$ preserves the area range.
\end{proof}

\subsection{Combinatorial recursion}
\label{section.combinatorial recursion}

In this section, we show that the combinatorial expression of Theorem~\ref{theorem.symmetric chains} also satisfies
the recursion relations of Lemma~\ref{lemma.two step recurrence} for $c\geqslant 1$ and equals $F(a,b,0)$
and $F(a,b,-1)$ for $c=0$ and $c=-1$, respectively.

Recall that the set of quasiheads is defined as
\[
	\widetilde{Q}(a,b,c)=\{(i,j)\ |\ 0\leqslant j\leqslant c,j\leqslant i\leqslant b+c, 2i+2j\leqslant a+b+2c-m_{cj}\}
\]
where $m_{cj}=c-j \pmod{2}$. Define
\begin{equation}
\label{equation.H comb}
	H_{\comb}(a,b,c)=\sum_{(i,j)\in\widetilde{Q}(a,b,c)}q^{A-2i-j}t^{i+\epsilon_{ij}},
\end{equation}
where $\epsilon_{ij}=\max(0,i+j-b-c)$.

\begin{lemma}
\label{lem: recursion H comb}
For $a+1 \geqslant b$, $a+1,b+1 \geqslant c\geqslant 1$, we have
\begin{multline}
	H_{\comb}(a,b,c) = H_{\comb}(a+2,b+2,c-2) + (qt)^{c}H(a+c,b-c)\\
	+(qt)^{c-1}H(a+c,b-c+2)+
	\sum_{2\leqslant \ell \leqslant \min(2c,a-b)} q^{a+2c-\ell}t^{\ell+b}
	-\delta_{a,b-1}q^{a+2c}t^{b}\\
	-(\delta_{a,b}+\delta_{a,b-1})q^{a+2c-1}t^{b+1},
\end{multline}
where $H(a,b)$ is given by \eqref{H for n=3}.
\end{lemma}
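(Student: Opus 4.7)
The plan is to compute the difference $H_{\comb}(a,b,c)-H_{\comb}(a+2,b+2,c-2)$ directly as a sum over a complementary index set, and then show this equals the remaining terms on the right-hand side. The key preliminary observation is that the substitution $(a,b,c)\mapsto(a+2,b+2,c-2)$ preserves $A=a+2b+3c$, $b+c$, $a+b+2c$ and the parity $m_{cj}=(c-j)\bmod 2$; consequently the weight $q^{A-2i-j}t^{i+\epsilon_{ij}}$ is invariant and $\widetilde{Q}(a+2,b+2,c-2)\subseteq\widetilde{Q}(a,b,c)$, with the complement $D$ consisting precisely of those $(i,j)\in\widetilde{Q}(a,b,c)$ having $j\in\{c-1,c\}$.

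I would then decompose $D=D_1^c\sqcup D_2^c\sqcup D_1^{c-1}\sqcup D_2^{c-1}$, where the superscript records $j$ and the subscript is $1$ when $\epsilon_{ij}=0$ and $2$ when $\epsilon_{ij}>0$. For $D_1^c$ the index range is $c\leqslant i\leqslant \min(b,\lfloor(a+b)/2\rfloor)$, and summing $q^{a+2b+2c-2i}t^i$ reproduces $(qt)^cH(a+c,b-c)$ whenever $a\geqslant b$; the sole boundary case $a=b-1$ drops the $i=b$ term and produces the correction $-\delta_{a,b-1}q^{a+2c}t^b$. An analogous computation for $D_1^{c-1}$ yields $(qt)^{c-1}H(a+c,b-c+2)$ except when $a\in\{b,b-1\}$, where the $i=b+1$ term is lost and one picks up $-(\delta_{a,b}+\delta_{a,b-1})q^{a+2c-1}t^{b+1}$. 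For the $\epsilon_{ij}>0$ pieces I parametrize $i=b+k$ in $D_2^c$ and $i=b+1+k$ in $D_2^{c-1}$ with $k\geqslant 1$: the monomials become $q^{a+2c-2k}t^{b+2k}$ and $q^{a+2c-1-2k}t^{b+1+2k}$ respectively, so setting $\ell=2k$ in the first and $\ell=2k+1$ in the second recovers respectively the even and odd terms of $\sum_{2\leqslant\ell\leqslant\min(2c,a-b)}q^{a+2c-\ell}t^{b+\ell}$.

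The main obstacle is the parity bookkeeping underlying the last assertion: the upper bounds on $k$ are $\min(c,\lfloor(a-b)/2\rfloor)$ for $D_2^c$ and $\min(c-1,\lfloor(a-b-1)/2\rfloor)$ for $D_2^{c-1}$, and one must verify on a case-by-case basis (according to the parity of $a-b$ and whether $2c$ or $a-b$ is the smaller) that the even $\ell$'s from $D_2^c$ together with the odd $\ell$'s from $D_2^{c-1}$ fill out precisely the interval $2\leqslant\ell\leqslant\min(2c,a-b)$, without gaps or overshoot at either endpoint. Once this parity check and the $a\in\{b,b-1\}$ boundary analysis for $D_1^c$ and $D_1^{c-1}$ are in hand, the identity assembles by summing the four contributions.
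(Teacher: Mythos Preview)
Your proposal is correct and follows essentially the same approach as the paper: the paper makes the identical observation that the shift $(a,b,c)\mapsto(a+2,b+2,c-2)$ fixes $b+c$, $a+b+2c$, and $m_{cj}$, deduces that the complement in $\widetilde{Q}$ consists of $j\in\{c,c-1\}$, then splits each $j$-slice at the threshold where $\epsilon_{ij}$ becomes positive, identifies the $\epsilon_{ij}=0$ pieces with $(qt)^cH(a+c,b-c)$ and $(qt)^{c-1}H(a+c,b-c+2)$ (with the same boundary corrections you note for $a\in\{b-1,b\}$), and merges the $\epsilon_{ij}>0$ pieces via the substitution $\ell=2i-2b$ (even) and $\ell=2i-1-2b$ (odd) into the single sum over $2\leqslant\ell\leqslant\min(2c,a-b)$. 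Your parity bookkeeping is exactly the step the paper handles by combining the two ranges into one in $k$ and then shifting by $2b$.
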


\begin{proof}
Observe that if $(a',b',c')=(a+2,b+2,c-2)$, then 
$b'+c'=b+c$, $a'+b'+2c'=a+b+2c$, $m_{c'j}=m_{cj}$.
Therefore
\[
	\widetilde{Q}(a',b',c')=\{(i,j) \mid 0\leqslant j\leqslant c',j\leqslant i\leqslant b+c, 2i+2j\leqslant a+b+2c-m_{cj}\}.
\]
We conclude that $\widetilde{Q}(a',b',c')\subseteq \widetilde{Q}(a,b,c)$
and the difference of these two sets consists of 
$(i,j)\in \widetilde{Q}(a,b,c)$ with $j=c$ or $j=c-1$.
In the former case, the inequalities have the form
\begin{equation}
\label{eq: j=c}
	c\leqslant i\leqslant b+c, \qquad 2i\leqslant a+b
\end{equation}
and the contribution to $H_{\comb}$ equals 
\[
	\sum_{\substack{c\leqslant i\leqslant b+c\\ 2i\leqslant  a+b}} q^{A-c-2i}\; t^{i+\max(0,i-b)}.
\]
This sum breaks into two parts for $c\leqslant i\leqslant b$ and for $b+1 \leqslant i$
\begin{equation*}
	\sum_{\substack{c\leqslant i\leqslant b\\ 2i\leqslant a+b}} q^{a+2b+2c-2i} \; t^{i} 
	+ \sum_{\substack{b+1\leqslant i\leqslant b+c\\ 2i\leqslant a+b}} q^{a+2b+2c-2i} \; t^{2i-b}.
\end{equation*}
If $a \geqslant b$, the restriction $2i\leqslant a+b$ in the first sum is redundant and so it becomes $(qt)^c H(a+c,b-c)$.  
On the other hand if $a=b-1$, the first sum does not contain the $i=b$ term $q^{a+2c}t^b$ but  $(qt)^c H(a+c,b-c)$ does.  
Thus we conclude the above is equal to 
\begin{equation*}
	(qt)^c H(a+c,b-c)-\delta_{a,b-1}q^{a+2c}t^b + \sum_{2b+2\leqslant 2i\leqslant \min(2b+2c,a+b)} q^{a+2b+2c-2i}\; t^{2i-b}.
\end{equation*}

Similarly, in the case $j=c-1$ for $a>b$ we obtain
\[
	c-1\leqslant i\leqslant b+c, \qquad 2i\leqslant a+b+1
\]
and the contribution to $H_{\comb}$ equals 
\begin{equation*}
	\sum_{\substack{c-1\leqslant i\leqslant b+1\\ 2i\leqslant a+b+1}} q^{a+2b+2c-2i+1}\; t^{i}
	+\sum_{\substack{b+2\leqslant i\leqslant b+c\\ 2i\leqslant a+b+1}}q^{a+2b+2c-2i+1}\; t^{2i-1-b}.
\end{equation*}
If $a > b$ the restriction $2i\leqslant a+b+1$ in the first sum is redundant and so it becomes $(qt)^{c-1} H(a+c,b-c+2)$.  
On the other hand if $a=b$ or $a=b-1$ the first sum does not contain the $i=b+1$ term $q^{a+2c-1}t^{b+1}$ or the $i=b$ 
term $q^{a+2c}t^b$ but  $(qt)^{c-1} H(a+c,b-c-2)$ does.  Thus we conclude the above is equal to 
\begin{multline*}
	(qt)^{c-1}H(a+c,b-c+2)-(\delta_{a,b-1}+\delta_{a,b})q^{a+2c-1}t^{b+1}\\
	+\sum_{2b+3\leqslant 2i-1\leqslant \min(2b+2c-1,a+b)}q^{a+2b+2c-2i+1}t^{2i-1-b}.
\end{multline*}

Finally,
\begin{multline*}
	\sum_{2b+2\leqslant 2i\leqslant \min(2b+2c,a+b)}q^{a+2b+2c-2i}t^{2i-b}
	+\sum_{2b+3\leqslant 2i-1\leqslant \min(2b+2c-1,a+b)}q^{a+2b+2c-2i+1}t^{2i-1-b}\\
	= \sum_{2b+2\leqslant k\leqslant \min(2b+2c,a+b)}q^{a+2b+2c-k}t^{k-b},
\end{multline*}
where we combined terms with even and odd $k$.
If we denote $\ell=k-2b$, then
\[
	\sum_{2b+2\leqslant k\leqslant \min(2b+2c,a+b)}q^{a+2b+2c-k}t^{k-b}=
	\sum_{2\leqslant \ell\leqslant \min(2c,a-b)}q^{a+2c-\ell}\; t^{\ell+b}.
\]
\end{proof}

\begin{corollary}
\label{corollary.Fcomb recursion}
Let 
\[
	F_{\comb}(a,b,c)=\frac{1}{1-t/q}H_{\comb}(a,b,c;q,t)+\frac{1}{1-q/t}H_{\comb}(a,b,c;t,q).
\]
Then for $a+1 \geqslant b$, $a+1,b+1 \geqslant c\geqslant 1$ we have 
\begin{multline}
	F_{\comb}(a,b,c)=F_{\comb}(a+2,b+2,c-2)+(qt)^cF(a+c,b-c)\\+(qt)^{c-1}F(a+c,b-c+2)+
	\sum_{2\leqslant \ell\leqslant \min(2c,a-b)}(qt)^{\ell+b}F(a-b+2c-2\ell)\\
	- \sum_{j=a-b+1}^1 (qt)^{b+j} F(a-b+2c-2j).
\end{multline}
\end{corollary}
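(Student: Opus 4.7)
The plan is to obtain the recursion for $F_{\comb}$ by applying the linear operator
\[
	L(X)(q,t) \;:=\; \frac{X(q,t)}{1-t/q} + \frac{X(t,q)}{1-q/t}
\]
to both sides of the $H_{\comb}$ recursion in Lemma~\ref{lem: recursion H comb}. By definition, $L(H_{\comb}(a,b,c)) = F_{\comb}(a,b,c)$, and by equation~\eqref{H to F} we have $L(H(u,v)) = F(u,v)$ and $L(H(u)) = F(u)$. Since $(qt)^k$ is symmetric in $q$ and $t$, the prefactors $(qt)^c$ and $(qt)^{c-1}$ commute with $L$. Thus the first three terms on the right-hand side translate directly.

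The main tool for the remaining terms is the identity
\[
	L(q^i t^j) \;=\; q^j t^j \cdot \frac{q^{i-j+1} - t^{i-j+1}}{q-t},
\]
which is exactly the algebraic step already appearing in the proof of Lemma~\ref{lem:positivity from H}. For $i \geqslant j$ this simplifies to $(qt)^j F(i-j)$ by the expansion used in Example~\ref{example.n=2}. Using Proposition~\ref{proposition.negative} together with $F(-1)=0$ from Lemma~\ref{lemma.-1}, one checks that the single formula $L(q^i t^j) = (qt)^j F(i-j)$ remains valid for $i < j$ once $F$ is interpreted at negative arguments; this is just the observation that the extra sign and power of $(qt)$ produced by $F(-a)=-(qt)^{-(a-1)}F(a-2)$ precisely match the expansion of $q^{i-j+1}-t^{i-j+1}$ in negative powers.

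Applying this identity to the sum $\sum_{2\leqslant\ell\leqslant\min(2c,a-b)}q^{a+2c-\ell}t^{\ell+b}$ is straightforward: one verifies that $a+2c-\ell \geqslant \ell+b$ for $\ell$ in the indicated range (splitting on whether $a-b\leqslant 2c$ or $a-b>2c$), so each term translates to $(qt)^{\ell+b}F(a-b+2c-2\ell)$, giving the middle sum in the corollary.

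Finally, the Kronecker-delta tail is handled by case analysis on $a-b \in \{-1,0,\geqslant 1\}$. When $a>b$ both $\delta$'s vanish and the claimed last sum is empty. When $a=b$, the only surviving term is $-q^{a+2c-1}t^{b+1}$, whose image under $L$ is $-(qt)^{b+1}F(2c-2)$, matching the $j=1$ term. When $a=b-1$, both deltas fire and produce $-q^{b-1+2c}t^{b}-q^{b-2+2c}t^{b+1}$; applying $L$ term by term (the $c=1$ subcase requires invoking the negative-argument convention to obtain $L(q^b t^{b+1})=(qt)^{b+1}F(-1)=0$) yields exactly $-(qt)^b F(2c-1)-(qt)^{b+1}F(2c-3)$, which is the $j=0,1$ contribution to $-\sum_{j=a-b+1}^{1}(qt)^{b+j}F(a-b+2c-2j)$.

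The step most likely to require care is the uniform extension $L(q^i t^j) = (qt)^j F(i-j)$ to $i<j$: one must consistently reconcile the direct residue computation of $L(q^i t^j)$ with the formal definition of $F$ at negative arguments given by Proposition~\ref{proposition.negative}, especially in the boundary case $c=1,\,a=b-1$ where $F(-1)=0$ silently cancels an apparent extra term. Once this bookkeeping is in place, all remaining identifications are routine.
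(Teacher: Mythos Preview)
Your argument is correct and follows the same route as the paper: apply the operator $L$ to the $H_{\comb}$ recursion of Lemma~\ref{lem: recursion H comb}, use~\eqref{H to F} and Example~\ref{example.n=2} for the $H$-terms, and then identify the monomial and Kronecker-delta contributions with the two remaining sums via the case split on $a-b$. The paper's proof compresses all of this into a single sentence plus the explicit unpacking of $\sum_{j=a-b+1}^{1}(qt)^{b+j}F(a-b+2c-2j)$ into the $\delta$-terms; your version simply spells out the intermediate verifications (including the harmless $c=1,\,a=b-1$ boundary where $F(-1)=0$).
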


\begin{proof}
This follows directly from Lemma~\ref{lem: recursion H comb}, using~\eqref{H to F}, and Example~\ref{example.n=2}.
Also note that
\begin{multline*}
	\sum_{j=a-b+1}^1 (qt)^{b+j} F(a-b+2c-2j) = \delta_{a,b} (qt)^{b+1} F(a-b+2c-2) \\
	+ \delta_{a,b-1} \left[ (qt)^b F(a-b+2c) + (qt)^{b+1} F(a-b+2c-2) \right].
\end{multline*}
\end{proof}

We need to check the base cases.

\begin{lemma}
\label{lemma.initial condition}
We have
\begin{equation*}
\begin{split}
	F_{\comb}(a,b,0)&=F(a,b,0) \quad \text{for $a,b\geqslant 0$,}\\
	F_{\comb}(a,b,-1)&=F(a,b,-1) \quad \text{for $a,b \geqslant 1$.}
\end{split}
\end{equation*}
\end{lemma}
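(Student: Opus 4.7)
The plan is to evaluate $F_{\comb}$ directly at $c=0$ and $c=-1$ from its definition via $H_{\comb}$, and match the result against the closed forms for $F(a,b,0)$ and $F(a,b,-1)$ already established in the paper. There is no significant obstacle; the whole lemma reduces to two short, almost case-free calculations, together with the identification $F(a,b,0)=F(a,b)$ from Corollary~\ref{cor:set 0}.

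First I would dispose of the case $c=-1$. The quasihead condition~\eqref{q1} requires $0 \leqslant j \leqslant c = -1$, which is vacuous, so $\widetilde{Q}(a,b,-1) = \emptyset$ and consequently $H_{\comb}(a,b,-1;q,t) = 0$ by~\eqref{equation.H comb}. The definition of $F_{\comb}$ in Corollary~\ref{corollary.Fcomb recursion} then gives $F_{\comb}(a,b,-1) = 0$, matching $F(a,b,-1) = 0$ for $a,b \geqslant 1$ from Lemma~\ref{lemma.-1}.

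Next I would handle $c=0$. With $c=0$ we get $j=0$, $m_{c,0} = 0$, $\epsilon_{i,0} = 0$ (since $i \leqslant b$), and $A = a+2b$, so
\[
\widetilde{Q}(a,b,0) = \{(i,0) \mid 0 \leqslant i \leqslant b,\ 2i \leqslant a+b\}.
\]
Substituting $H_{\comb}(a,b,0;q,t) = \sum_{i} q^{a+2b-2i}t^{i}$ (summed over this range) into the symmetrization in Corollary~\ref{corollary.Fcomb recursion} and applying the monomial identity
\[
\frac{q^{I}t^{J}}{1-t/q} + \frac{q^{J}t^{I}}{1-q/t} = [J,I]_{q,t}
\]
used in the proof of Lemma~\ref{lem:positivity from H}, I obtain
\[
F_{\comb}(a,b,0) = \sum_{\substack{0 \leqslant i \leqslant b\\ 2i \leqslant a+b}} [i,\, a+2b-2i]_{q,t} = \sum_{\substack{0 \leqslant i \leqslant b\\ 2i \leqslant a+b}} \sum_{j=i}^{a+2b-2i} q^{j}\, t^{a+2b-i-j}.
\]

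Finally I would compare this to the closed form for $F(a,b)$. By Corollary~\ref{cor:set 0} we have $F(a,b,0) = F(a,b)$, and Lemma~\ref{F(a,b)} (valid since $a+1 \geqslant b$) gives $F(a,b) = \sum_{i=0}^{b}\sum_{j=i}^{a+2b-2i} q^{j} t^{a+2b-i-j}$. The two expressions differ only in whether the outer index $i$ is constrained by $2i \leqslant a+b$. When $a \geqslant b$, this constraint is automatic for $0 \leqslant i \leqslant b$. The only subtle case, which I would verify explicitly, is the boundary $a = b-1$: here the constraint removes the $i = b$ term, whose inner sum runs over $b \leqslant j \leqslant a = b-1$ and is therefore empty, so no contribution is lost. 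This matches the two sums and completes the proof.
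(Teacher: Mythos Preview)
Your proof is correct and follows essentially the same approach as the paper: both use that $\widetilde{Q}(a,b,-1)=\emptyset$ together with Lemma~\ref{lemma.-1} for the $c=-1$ case, and both reduce the $c=0$ case to the known formula for $F(a,b)$ via Corollary~\ref{cor:set 0}. The only difference is that the paper matches at the level of $H_{\comb}(a,b,0)=H(a,b)$ (which is literally true only for $a\geqslant b$; the extra term $q^{a}t^{b}$ at $a=b-1$ vanishes after symmetrization), whereas you symmetrize first and then explicitly note that the $i=b$ summand in Lemma~\ref{F(a,b)} is empty when $a=b-1$---so your handling of the boundary case is slightly cleaner.
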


\begin{proof}
For $a,b\geqslant 0$ and $c=0$, we have $j=0$ in $\widetilde{Q}(a,b,0)$, so $0\leqslant i\leqslant b$. Therefore, by 
comparing~\eqref{equation.H comb} with~\eqref{H for n=3}
\[
	H_{\comb}(a,b,0)=H(a,b),
\]
and hence $F_{\comb}(a,b,0)=F(a,b)$. Furthermore, by Corollary~\ref{cor:set 0} the first claim follows.

For $a,b\geqslant 1$, we have by Lemma~\ref{lemma.-1} and the fact that $F_{\comb}(a,b,-1)=0$ by definition
that $F(a,b,-1)=F_{\comb}(a,b,-1)=0$.
\end{proof}

\begin{corollary}
\label{corollary.F=Fcomb}
For nonnegative integers $a,b,c$ and $a+1 \geqslant b$, $a+1,b+1 \geqslant c$, we have $F(a,b,c) = F_{\comb}(a,b,c)$ 
proving Theorem~\ref{theorem.symmetric chains}.
\end{corollary}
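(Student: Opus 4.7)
The plan is to proceed by induction on $c$, using the two-step recurrence of Lemma~\ref{lemma.two step recurrence} as the engine. The base cases $c=0$ and $c=-1$ are already established in Lemma~\ref{lemma.initial condition}: there $F_{\comb}(a,b,0)=F(a,b,0)$ holds because $\widetilde{Q}(a,b,0)$ degenerates to the index set of $H(a,b)$, and $F_{\comb}(a,b,-1)=0=F(a,b,-1)$ by definition and Lemma~\ref{lemma.-1}. These allow the induction to reduce $c$ by $2$ at each step down to $c\in\{0,1\}$ (where a further reduction to $c=-1$ handles the parity).

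The inductive step is simply a term-by-term comparison of Lemma~\ref{lemma.two step recurrence} with Corollary~\ref{corollary.Fcomb recursion}. Both recurrences express $F(a,b,c)$ (resp.\ $F_{\comb}(a,b,c)$) in terms of $F(a+2,b+2,c-2)$, $F(a+c,b-c)$, $F(a+c,b-c+2)$, a sum $\sum_{j}(qt)^{b+j}F(a-b+2c-2j)$ over a positive range of $j$, and a correction coming from small values of $j$. Writing both sides out, the first three terms agree verbatim, and the main sum agrees for $2\leqslant j \leqslant \min(2c,a-b)$. The only subtlety is to match the correction terms: the $-\sum_{j=a-b+1}^{1}(qt)^{b+j}F(a-b+2c-2j)$ appearing in Lemma~\ref{lemma.two step recurrence} must equal the Kronecker-delta corrections $-\delta_{a,b-1}q^{a+2c}t^b - (\delta_{a,b}+\delta_{a,b-1})q^{a+2c-1}t^{b+1}$ (symmetrized in $q,t$ via \eqref{H to F}) produced by Corollary~\ref{corollary.Fcomb recursion}. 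Since this sum is nonempty only when $a\in\{b-1,b\}$, one checks the two cases separately: when $a=b$, only $j=1$ contributes, yielding a single $-(qt)^{b+1}F(2c-2)$ term; when $a=b-1$, both $j=0$ and $j=1$ contribute, yielding $-(qt)^bF(2c)-(qt)^{b+1}F(2c-2)$. Applying \eqref{H to F} to the combinatorial side confirms the match in each case.

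By the inductive hypothesis $F(a+2,b+2,c-2)=F_{\comb}(a+2,b+2,c-2)$, and all remaining terms on both sides are values of $F$ at smaller $n$ (either $F(\alpha,\beta)$ or $F(\alpha)$, given explicitly by Example~\ref{example.2 and 3} and Lemma~\ref{F(a,b)}). Thus once the term-by-term match is verified, the inductive step is complete. The main (though entirely mechanical) obstacle is bookkeeping the boundary cases $a\in\{b-1,b\}$: these are precisely the values for which the range $j=a-b+1,\ldots,1$ in Lemma~\ref{lemma.two step recurrence} is nonempty, and also precisely the values for which the Kronecker deltas in Lemma~\ref{lem: recursion H comb} fire, so the two sources of ``correction terms'' must be aligned carefully. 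Once this alignment is done, Theorem~\ref{theorem.symmetric chains} follows immediately because $F_{\comb}(a,b,c)$ is by definition the sum $\sum_{(i,j)\in\widetilde{Q}}[i+\epsilon_{ij},A-2i-j]_{q,t}$ appearing in the statement of the theorem.
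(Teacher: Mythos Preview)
Your approach is correct and essentially identical to the paper's: both observe that Lemma~\ref{lemma.two step recurrence} and Corollary~\ref{corollary.Fcomb recursion} give the \emph{same} two-step recursion (differing only in $F$ versus $F_{\comb}$ in the leading term), so induction on $c$ together with the base cases of Lemma~\ref{lemma.initial condition} finishes the argument. Your discussion of matching the Kronecker-delta corrections to the sum $-\sum_{j=a-b+1}^{1}(qt)^{b+j}F(a-b+2c-2j)$ is unnecessary here, since Corollary~\ref{corollary.Fcomb recursion} is already stated with that sum verbatim (the Kronecker-delta bookkeeping was absorbed into its proof); incidentally, in your $a=b-1$ case the arguments should be $F(2c-1)$ and $F(2c-3)$, not $F(2c)$ and $F(2c-2)$.
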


\begin{proof}
By Lemma~\ref{lemma.two step recurrence} and Corollary~\ref{corollary.Fcomb recursion}, $F(a,b,c)$ and 
$F_{\comb}(a,b,c)$ satisfy the same two step recursion. Hence the equality $F(a,b,c) = F_{\comb}(a,b,c)$ 
can be reduced to the equalities $F(a,b,0) = F_{\comb}(a,b,0)$ for $a,b\geqslant 0$ and 
$F(a,b,-1) = F_{\comb}(a,b,-1)$ for $a,b\geqslant 1$. These are given in Lemma~\ref{lemma.initial condition}.
\end{proof}

\subsection{Proof of Theorem~\ref{theorem.combinatorics}}
\label{section.proof comb}

By Corollary~\ref{corollary.F=Fcomb}, we have that $F(a,b,c)=F_{\comb}(a,b,c)$. By Proposition~\ref{proposition:H-Q},
there is an area preserving bijection between quasiheads and heads. Combined with 
Corollary~\ref{corollary.cover}, there is also an area preserving bijection with pseudoheads and tails.
Furthermore, each $\lambda \subseteq \lambda(a,b,c)$ sits in precisely one chain indexed by
a given pseudohead (or head). The proofs of Theorems~\ref{neg} and~\ref{pos} tell us, which chain $\lambda$ sits in
depending on the cases spelled out in Section~\ref{section.combinatorics}:

\begin{center}
\begin{tabular}{l|l}
Case & Chain membership\\
\hline
Case 1(a) & $\lambda \in C(P_{yz})$\\
Case 1(b)(i) & $\lambda \in C(P_{(L+z-x)z})$\\
Case 1(b)(ii) & $\lambda \in C(T^{(L-x)(b+c-y)})$\\
Case 2 & $\lambda \in C(H_x^y)$
\end{tabular}
\end{center}
Now if the area range for a given chain is $[r,R]$, then due to the symmetry between $q$ and $t$ in each chain,
we have
\[
	\mathsf{stat}(\lambda) = r + R - \mathsf{area}(\lambda) = r + R -A + x+y+z
\]
for $\lambda=(x,y,z)$. Using the area ranges for pseudoheads, tails, and heads as given in Table~\ref{table.indexing sets}
and the beginning of this section, this yields \eqref{equation.statistics}.
In Case 1(a), we first obtain $\mathsf{stat}(\lambda)=x+\max(\epsilon_{yz},\delta_{yz})$, which is equal to
$x+\max(0,\lceil \frac{y-a}{2} \rceil, y+z-b-c,\lceil \frac{2y+z-L}{2} \rceil)$.
In Case 1(b)(i), we first obtain $\mathsf{stat}(\lambda)=-L+2x+y-z+\max(\epsilon_{(L+z-x)z},\delta_{(L+z-x)z})$,
but using that $y+z\leqslant b+c$ and $L+z-x\leqslant y$, we obtain $\epsilon_{(L+z-x)z}=0$ and 
$\delta_{(L+z-x)x}= \lceil \frac{L+z-x-a}{2} \rceil$. Combined with Theorem~\ref{theorem.symmetric chains}
this proves Theorem~\ref{theorem.combinatorics}.

\bibliographystyle{alpha}
\bibliography{main}{}

\end{document}